\title{Infinite-dimensional Polish groups and Property (T)}
\author{Tom\'as Ibarluc{\'\i}a}
\thanks{Research partially supported by the ANR contract AGRUME (ANR-17-CE40-0026).}
\address{
  Universit{\'e} de Paris\\
  CNRS, Institut de Math{\'e}matiques de Jussieu--Paris Rive Gauche\\
  F-75013 Paris\\
  France}
\urladdr{https://webusers.imj-prg.fr/~tomas.ibarlucia}
\g@addto@macro\bfseries{\boldmath}
\def\mbN{\mathbb{N}}
\def\mbZ{\mathbb{Z}}
\def\mbQ{\mathbb{Q}}
\def\mbR{\mathbb{R}}
\def\mbC{\mathbb{C}}
\def\mbF{\mathbb{F}}
\def\mbI{\mathbb{I}}
\def\mbP{\mathbb{P}}
\def\mbL{\mathbb{L}}
\def\mbT{\mathbb{T}}
\def\wM{\widehat M}
\DeclareMathOperator{\Sym}{Sym}
\DeclareMathOperator{\Unit}{U}
\DeclareMathOperator{\GL}{GL}
\DeclareMathOperator{\Aut}{Aut}
\DeclareMathOperator{\End}{End}
\DeclareMathOperator{\Homeo}{Homeo}
\DeclareMathOperator{\acl}{acl}
\DeclareMathOperator{\dcl}{dcl}
\DeclareMathOperator{\tp}{tp}
\newcommand{\eq}{\mathrm{eq}}
\newcommand{\meq}{\mathrm{meq}}
\theoremstyle{plain}        \newtheorem{fact}{Fact}[section]
\theoremstyle{plain}        \newtheorem{theorem}[fact]{Theorem}
\theoremstyle{plain}        \newtheorem*{theorem*}{Theorem}
\theoremstyle{plain}        \newtheorem{lem}[fact]{Lemma}
\theoremstyle{plain}        \newtheorem{prop}[fact]{Proposition}
\theoremstyle{plain}        \newtheorem{cor}[fact]{Corollary}
\theoremstyle{plain}        \newtheorem*{cor*}{Corollary}
\theoremstyle{definition}   \newtheorem{rem}[fact]{Remark} 
\theoremstyle{definition}   \newtheorem{defin}[fact]{Definition}
\theoremstyle{definition}   
\theoremstyle{definition}   
\theoremstyle{definition}   
\theoremstyle{definition}   \newtheorem*{question*}{Question}
\theoremstyle{definition}   
\numberwithin{equation}{section}
\def\Ind#1#2{#1\setbox0=\hbox{$#1x$}\kern\wd0\hbox to 0pt{\hss$#1\mid$\hss}
\lower.9\ht0\hbox to 0pt{\hss$#1\smile$\hss}\kern\wd0}
\def\ind{\mathop{\mathpalette\Ind{}}}
\def\Notind#1#2{#1\setbox0=\hbox{$#1x$}\kern\wd0\hbox to 0pt{\mathchardef
\nn="3236\hss$#1\nn$\kern1.4\wd0\hss}\hbox to 0pt{\hss$#1\mid$\hss}\lower.9\ht0
\hbox to 0pt{\hss$#1\smile$\hss}\kern\wd0}
\newcommand{\bigperp}{
  \mathop{\mathpalette\bigp@rp\relax}
  \displaylimits
}
\newcommand{\bigp@rp}[2]{
  \vcenter{
    \m@th\hbox{\scalebox{\ifx#1\displaystyle1.7\else1.4\fi}{$#1\perp$}}
  }
}
\newcommand{\ov}[1]{\overline{#1}}
\newcommand{\dd}[2]{\textrm{d}_{\! #1}\! #2}
\DeclarePairedDelimiter\set{\{}{\}}
\DeclarePairedDelimiter\norm{\|}{\|}
\DeclarePairedDelimiter\abs{|}{|}
\DeclarePairedDelimiter\bigabs{\big|}{\big|}
\DeclarePairedDelimiter\ip{\langle}{\rangle}
\DeclarePairedDelimiter\gen{\langle}{\rangle}
\newcommand{\actson}{\curvearrowright}
\newcommand{\inv}{^{-1}}
\begin{document}

\begin{abstract}
We show that all groups of a distinguished class of \guillemotleft large\guillemotright\ topological groups, that of Roelcke precompact Polish groups, have Kazhdan's Property (T). This answers a question of Tsankov and generalizes previous results by Bekka (for the infinite-dimensional unitary group) and by Evans and Tsankov (for oligomorphic groups). Further examples include the group $\Aut(\mu)$ of measure-preserving transformations of the unit interval and the group $\Aut^*(\mu)$ of non-singular transformations of the unit interval. 

More precisely, we prove that the smallest cocompact normal subgroup $G^\circ$ of any given non-compact Roelcke precompact Polish group $G$ has a free subgroup $F\leq G^\circ$ of rank two with the following property: every unitary representation of $G^\circ$ without invariant unit vectors restricts to a multiple of the left-regular representation of $F$. The proof is model-theoretic and does not rely on results of classification of unitary representations. Its main ingredient is the construction, for any $\aleph_0$-categorical metric structure, of an action of a free group on a system of elementary substructures with suitable independence conditions.
\end{abstract}

\maketitle
\setcounter{tocdepth}{1}
\tableofcontents

\section*{Introduction}

This paper is concerned with the study of unitary representations and Kazhdan's Property~(T) outside the realm of locally compact groups, which has received increased attention in recent years. Although admitting no invariant measures and being devoid of regular representations, many Polish non-locally compact groups have interesting unitary representations, and call for the development of new techniques for their understanding. In return for the lost tools, new phenomena appear, such as families of groups in which Property~(T) and amenability are no longer contradictory but often coincidental.

\subsection*{Background} The first interesting example of an \guillemotleft infinite-dimensional\guillemotright\ group with Property~(T) was given by Shalom \cite{shaLoop}, who exhibited finite Kazhdan sets for the loop groups $\text{L}(\text{SL}_n(\mbC))$, $n\geq 3$. (See Section \ref{subsec:Prop-(T)} for the basic definitions concerning Property~(T).) A few years later, Bekka \cite{bekkaUnitary} proved that the unitary group $\text{U}(\ell^2)$ has Property~(T). For this he used the classification of the unitary representations of $\Unit(\ell^2)$ obtained by Kirillov and Ol'shanski, and showed that the generators of any shift action $\mbF\actson\ell^2(\mbF)$ by a non-abelian free group of finite rank form a Kazhdan set for $\Unit(\ell^2(\mbF))$.

Within the framework of infinite-dimensional \emph{permutation groups}, a milestone was set by Tsankov \cite{tsaUnitary} with the classification of the unitary representations of oligomorphic groups. As a consequence of his result, he was able to prove Property~(T) for a number of significant examples including the infinite symmetric group $S_\infty$, the group $\Aut(\mbQ,<)$ of order-preserving bijections of the rationals and the group $\Homeo(2^\omega)$ of homeomorphisms of the Cantor space. He showed, for instance, that the permutation action $\mbF\actson\mbF$ induces a Kazhdan set for $S_\infty=\Sym(\mbF)$, very much like in the case of the unitary group. Subsequently, Evans and Tsankov \cite{evatsaFree} succeeded in generalizing the analysis of \cite{tsaUnitary} and established Property~(T) for all oligomorphic groups, as well as for a related larger class of permutation groups.

In a recent work, Pestov \cite{pestovVersus} showed that amenability and (strong) Property~(T) remain contradictory within the class of unitarily representable SIN groups, thus providing several \emph{non}-examples of strong Property~(T) in the non-locally compact setting. In particular, if $G$ is a non-trivial, compact, metrizable group, the Polish group $L^0([0,1],G)$ of random elements of $G$ does not admit a finite Kazhdan set. In the case of the circle $G=\mbT$, Solecki \cite{solUnitary} classified the unitary representations of the corresponding randomized group. Inspired by Solecki's work, Pestov proved moreover that $L^0([0,1],\mbT)$ does not admit any compact Kazhdan set. The last section of \cite{pestovVersus} provides a nice summary of the current state of knowledge concerning Kazhdan's~(T) and related properties for several important examples of infinite-dimensional topological groups.

The unitary group studied by Bekka and the permutation groups covered by the works of Evans and Tsankov have something in common. They are all \emph{Roelcke precompact} topological groups. This means that their completion with respect to the \emph{Roelcke} (or \emph{lower}) uniformity is compact or, equivalently, that for every open set $U\subseteq G$ there exists a finite set $F\subseteq G$ such that $G=UFU$. Other examples in this family are the group $\Aut(\mu)$ of measure-preserving transformations of the Lebesgue space $([0,1],\mu)$, the group $\Aut^*(\mu)$ of measure-class-preserving transformations of the same space, the semi-direct product $L^0([0,1],\mbT)\rtimes\Aut(\mu)$, or the group $\Homeo_+([0,1])$ of increasing homeomorphisms of the interval. Question~(1) at the end of \cite{tsaUnitary} asked whether every Polish Roelcke precompact (\emph{PRP}) group has Property~(T).

Later, Ben Yaacov and Tsankov \cite{bentsa} realized that PRP groups are exactly those topological groups that appear as automorphism groups of \emph{$\aleph_0$-categorical structures} in the sense of continuous logic. These are separable structures that are characterized up to isomorphism by their first-order properties, such as the Hilbert space $\ell^2$. Oligomorphic groups correspond to the particular case given by classical (i.e., 2-valued) logic, as has been known for long ---a fact that had been exploited in \cite{tsaUnitary}. The new characterization of PRP groups opened the door for novel interactions with model-theory. A precise dictionary between several topological-dynamic features of PRP groups and model-theoretic properties of the associated structures arose from the works \cite{bentsa,iba16,bit18}, along with several applications.

\subsection*{Results and examples} In this paper we use model-theoretic methods to prove that PRP groups are Kazhdan, thus answering Tsankov's question. Moreover, we show that this holds in a particularly strong form, improving also over the previously known cases.

\begin{theorem*}
Every Polish Roelcke precompact group $G$ has Property~(T). Moreover, up to passing to a cocompact normal subgroup (and assuming $G$ is non-compact), $G$ has a freely two-generated subgroup $F$ such that every unitary representation of $G$ with no invariant unit vectors restricts to a multiple of the left-regular representation of $F$.
\end{theorem*}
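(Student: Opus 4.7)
The plan is to reformulate the problem model-theoretically, produce inside $G$ a copy of the free group $F_2$ whose geometry encodes strong model-theoretic independence between its orbits, and use this independence to force the wandering property on vectors of every unitary representation without invariant vectors. By Ben Yaacov--Tsankov I would write $G = \Aut(M)$ for some separable $\aleph_0$-categorical metric structure $M$, and take for $G^\circ$ the pointwise stabilizer in $G$ of $\acl(\emptyset)$ in $M^\meq$---a closed characteristic subgroup with compact quotient, which under the non-triviality assumption admits non-trivial unitary representations. After naming parameters from $\acl(\emptyset)$, I may reduce to the case $G = G^\circ$ and look for a freely two-generated subgroup $F \leq G$ such that every continuous unitary representation $\pi$ of $G$ with no invariant vectors restricts to a multiple of $\lambda_F$.

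The central construction takes place in a sufficiently saturated elementary extension $\+M \succ M$. Indexing by the Cayley graph of $F_2 = \langle a, b\rangle$, I would build inductively a system $(M_w)_{w\in F_2}$ of separable elementary substructures of $\+M$, with $M_e = M$ and, for each edge $w \to sw$ (where $s\in\{a^{\pm 1}, b^{\pm 1}\}$), an isomorphism from $M_w$ onto $M_{sw}$ fixing the common stem $M_{w\wedge sw}$ (the common prefix in the Cayley tree), realized so that $M_{sw}$ is independent from the entire sub-tree previously built on the $w$-side over that stem. The independence here must be a symmetric, transitive, stationary-over-substructures notion available in every $\aleph_0$-categorical metric theory: establishing its existence, using the structure of the Roelcke compactification and a suitable canonical-base analysis, is one of the key technical inputs. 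Gluing the edge isomorphisms then produces an $F_2$-action on $\bigcup_w M_w$, which by $\aleph_0$-categoricity is isomorphic to $M$; the two generators transport to elements $\sigma, \tau \in G$ which, by the tree-like independence pattern, generate a free group $F = \langle\sigma, \tau\rangle \leq G$.

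For the representation-theoretic step, let $\pi$ be a continuous unitary representation of $G$ with no invariant unit vectors. Vectors fixed by the pointwise stabilizer $G_A$ of some finite tuple $A \subset M$ are dense in the representation space, so it suffices to show wandering for any such $\xi$. For a non-trivial word $w \in F_2$, by construction the tuple $w\cdot A$ is independent from $A$ over $\emptyset$, so $(A, w\cdot A)$ realizes the unique ``independent product'' orbit-type. The coefficient $\bigip{\pi(w)\xi, \xi}$ is thereby constant on the corresponding double coset in the Roelcke compactification of $G$; this constant value can be identified with $\bigip{P\xi, \xi}$ for $P$ the orthogonal projection onto the subspace of $G$-invariants, and so vanishes by hypothesis. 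Hence every $G_A$-fixed $\xi$ is $F$-wandering, and standard Hilbert-space arguments using the projections onto the $G_A$-fixed subspaces then upgrade this to the full statement that $\pi \restriction F$ is a multiple of $\lambda_F$.

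The main obstacle is the construction and analysis of the independence notion in the full generality of (possibly unstable) $\aleph_0$-categorical metric theories. In stable examples like those of $\Aut(\mu)$ or $\Unit(\ell^2)$, non-forking provides a symmetric stationary independence relation perfectly suited to the tree-amalgamation; but the theorem must also cover unstable examples such as $\Aut^*(\mu)$, so one needs a more delicate independence relation tied to the Roelcke-precompact structure, yet strong enough (symmetric, transitive, with well-behaved amalgamation along the Cayley graph) to carry out the construction of $F$ inside $G^\circ$.
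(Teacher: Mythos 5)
Your headline plan matches the paper's: write $G=\Aut(M)$ with $M$ $\aleph_0$-categorical, pass to $G^\circ=\Aut(M^\meq/\acl(\emptyset))$, build a system of elementary substructures indexed over a free group with independence conditions, and push that independence into the Hilbert space of an arbitrary unitary representation. The independence notion you suspect must exist is the paper's \emph{stable independence} $\ind$ (\S1.4), which is available in every, possibly unstable, metric theory via $\phi$-definitions of stable formulas and satisfies exactly the properties you list (Proposition~\ref{prop:prop-of-ind}); no canonical-base analysis beyond local stability theory is needed, so your worry about the "main obstacle" is addressed. Note also that the paper indexes the models not by vertices of the Cayley graph but by a family of \emph{intervals} of $\mbF$ closed under intersection (\S\ref{sInt}); that combinatorial choice, not a vertex-by-vertex tree, is what drives the eventual orthogonal decomposition of $H$.

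The representation-theoretic half of your proposal has two real gaps. First, density of $G_A$-fixed vectors for $A$ a finite tuple is a theorem for non-Archimedean Polish groups (where $G_A$ is open), but it is neither standard nor obviously true for automorphism groups of genuine metric structures --- $\Unit(\ell^2)$, $\Aut(\mu)$, $\Aut^*(\mu)$ --- which are precisely the new cases this theorem covers. You would have to prove it, and the paper avoids it entirely: for a cyclic vector $\xi$, the closed orbit $O=\ov{G\xi}$ is itself a metric imaginary sort of $M$ (Lemma~\ref{lem:G-actson-O}), so the system $\set{M_I}$ directly induces closed subspaces $H_I=\ov{\gen{O_I}}\subseteq H$ with no fixed-vector hypothesis.

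Second, your key identity $\ip{\pi(w)\xi,\xi}=\ip{P\xi,\xi}$ is asserted, not derived. Translating a model-theoretic independence into orthogonality of Hilbert-space projections is the technical heart of the paper's proof (Lemma~\ref{sT:lem:ind-transfer}): it uses $\phi$-definitions, the stability of the inner-product formula on the sort $O$, and the normalization $\dcl(\emptyset)=\acl(\emptyset)$, and it is not a formal consequence of the Roelcke-compactification picture. Moreover, the independence $wA\ind A$ over $\emptyset$ that you invoke for \emph{every} nontrivial $w$ is stronger than what the construction delivers --- the cairn gives $M_I\ind_{M_K}M_J$ with $K=I\cap J$ possibly nonempty, as happens for $J=wI$ with small $w$. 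The paper works at the level of the subspaces $H_I$ and the relative orthogonality $H_I\bigperp_{H_K}H_J$, then peels off the layers $\widetilde E_n = E_n\ominus E_{n-1}$, precisely to avoid needing orthogonality over $\emptyset$ for nearby translates. These are the places you would need to fill in.
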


In particular, if $G$ is a PRP group with no compact quotients, then $G$ has a Kazhdan set with two elements. We can actually deduce the following, which in particular confirms, for PRP groups, a suspicion of Bekka \cite[p.~512]{bekkaUnitary} that was disproved by Pestov \cite[\textsection 8]{pestovVersus} for arbitrary topological groups.

\begin{cor*}
A Roelcke precompact Polish group has a finite Kazhdan set if and only if its Bohr compactification does.
\end{cor*}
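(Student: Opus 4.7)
The easy direction is formal: if $Q \subseteq G$ is a finite Kazhdan set, its image in $bG$ is a Kazhdan set for $bG$, since any unitary representation of $bG$ pulls back along the canonical dense-image morphism $G \to bG$, $Q$-almost invariance is preserved, and invariant vectors transfer back by density of the image. Nothing more is needed here.

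For the converse, the strategy is to combine the two free generators furnished by the main theorem with a lift of a finite Kazhdan set for $bG$. First, apply the theorem to obtain a cocompact normal subgroup $G^\circ \trianglelefteq G$ and a free subgroup $F=\langle a,b\rangle \leq G^\circ$ such that every unitary representation of $G^\circ$ without invariant unit vectors restricts to a multiple of the left-regular representation of $F$; if $G^\circ$ admits no non-trivial unitary representation, simply drop $\{a,b\}$ from the argument. Next, choose a finite $Q_1 \subseteq G$ whose image is a Kazhdan set for $bG$ with some constant $\epsilon_1 > 0$, and invoke Kesten's theorem on the non-amenability of $\mbF_2$ to extract $\epsilon_0 > 0$ such that $\{a,b\}$ is a Kazhdan set with constant $\epsilon_0$ for any multiple of the regular representation of $F$.

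The claim to establish is that $Q := Q_1 \cup \{a,b\}$ is a finite Kazhdan set for $G$. Given a unitary representation $\pi\colon G \to \Unit(\mcH)$, decompose $\mcH = \mcH_0 \oplus \mcH_1$ with $\mcH_0 := \mcH^{G^\circ}$; normality of $G^\circ$ in $G$ makes both summands $G$-invariant. On $\mcH_1$ the restriction $\pi|_{G^\circ}$ has no invariant vectors, so by the theorem it is a multiple of the regular representation of $F$, and $\{a,b\}$ has spectral gap $\epsilon_0$ there. On $\mcH_0$ the $G$-action factors through the compact quotient $G/G^\circ$ and hence, by Peter--Weyl, decomposes as a direct sum of finite-dimensional $G$-subrepresentations; each of these factors through $bG$ by the universal property of the Bohr compactification, so $Q_1$ is a Kazhdan set on $\mcH_0$ with constant $\epsilon_1$.

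The rest is routine tracking of constants. For a unit vector $\xi = \xi_0 + \xi_1$ that is $(Q,\epsilon)$-almost invariant with $\epsilon$ small, since $a,b \in G^\circ$ fix $\xi_0$ one has $\|\pi(a)\xi - \xi\| = \|\pi(a)\xi_1 - \xi_1\|$ (and similarly for $b$), so the $\epsilon_0$ bound forces $\|\xi_1\|$ small; after normalizing, $\xi_0/\|\xi_0\|$ is then a $(Q_1, \epsilon_1)$-almost invariant unit vector in $\mcH_0$, which yields a $G$-invariant unit vector. I do not anticipate a genuine obstacle: the content is entirely carried by the main theorem, and what remains is a standard combination of the free-group spectral gap with the universal property of $bG$.
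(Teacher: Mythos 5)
Your proof is correct and takes essentially the same route as the paper's: the paper obtains the converse by combining the exact sequence $1\to G^\circ\to G\to bG\to 1$ of Corollary~\ref{cor:Bohr}, the two-element Kazhdan set for $G^\circ$ furnished by the main construction, and the extension lemma (Lemma~\ref{sT:lem:extensions}). The only difference is that you reprove that extension lemma by hand—splitting $\mathcal{H}$ into $\mathcal{H}^{G^\circ}$ and its orthogonal complement and tracking the constants—rather than citing it, which is perfectly fine.
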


Two conspicuous concrete cases of our theorem are the groups $\Aut(\mu)$ and $\Aut^*(\mu)$ mentioned above. Pestov \cite[\textsection 9]{pestovVersus} lists the question of whether these groups are Kazhdan as open. In the case of $\Aut(\mu)$, however, it seems likely that Property~(T) could be deduced from the work of Neretin \cite{nerBistochastic}\cite[\textsection 8.4]{nerBook} on the representations of this group, in a similar fashion as Bekka's proof for $U(\ell^2)$ from the works of Kirillov and Ol'shanski (although the works \cite{nerBistochastic,nerBook} only contain a description of the \emph{irreducible} representations of $\Aut(\mu)$, and an argument has to be added in order to complete the classification of all unitary representations). In the case of $\Aut^*(\mu)$, on the other hand, I believe that the result is indeed new. In both cases we are able to give explicit finite Kazhdan sets (see Section~\ref{sec:special-cases}).

Another interesting family of examples is related to the randomized groups $L^0([0,1],G)$. As we said before, these groups generally fail to have Property~(T). However, if $G$ is a PRP group, then the semidirect product $L^0([0,1],G)\rtimes\Aut(\mu)$ is a PRP group as well, as shown in \cite{ibaRando}. Hence the semidirect product enjoys Property~(T), and it even has a finite Kazhdan set (which cannot be contained in $\Aut(\mu)$, if $G$ has non-trivial representations). In particular, and in contrast to Pestov's result, all groups of the form $L^0([0,1],G)\rtimes\Aut(\mu)$ for compact metrizable $G$ have strong Property~(T).

\begin{question*}
Let $G$ be a Polish, Kazhdan group. Is $L^0([0,1],G)\rtimes\Aut(\mu)$ also Kazhdan?
\end{question*}

A similar example is given by the semi-direct product $L^0([0,1],\mbT)\rtimes\Aut^*(\mu)$. The latter can be identified with the group of linear isometries of the Banach space $L^1([0,1])$, which is an $\aleph_0$-categorical structure. Hence $L^0([0,1],\mbT)\rtimes\Aut^*(\mu)$ is a PRP group and also has Property~(T).

\subsection*{Main ideas} Let us comment on the proof of our theorem and discuss some of its ingredients. A feature of our proof is that unlike the previous results, it does not rely on any classification result of unitary representations: such a classification is not currently available for general PRP groups.

In the case of oligomorphic groups, which are permutation groups, one has natural substitutes for the regular and quasi-regular representations of the locally compact setting. Indeed, when $M$ is a countable discrete structure and $G=\Aut(M)$, one has at hand the representation $G\actson\ell^2(M)$ as well as all representations $G\actson\ell^2(S)$ where $S$ is an imaginary sort of $M$ (i.e., the quotient of $M^n$ by a definable equivalence relation). The results of Tsankov \cite{tsaUnitary} actually show that if $M$ is countably categorical, then \emph{every separable unitary representation of $G$ is a subrepresentation of $G\actson\ell^2(M^\eq)$} (where $M^\eq$ is the structure regrouping all imaginary sorts of $M$). Whence the strategy applied in \cite{tsaUnitary}: if the structure $M$ has weak elimination of imaginaries, to prove Property~(T) it suffices to find an action of a non-abelian free group on $M$ that is \emph{free} on the non-algebraic part of $M$. This is easy to produce in many concrete examples: see \cite[Theorem~6.7]{tsaUnitary}. The main contribution of the subsequent work \cite{evatsaFree} by Evans and Tsankov is the construction of a free action $\mbF\actson M^\eq\setminus\acl(\emptyset)$ of a two-generated free group for \emph{any} classical $\aleph_0$-categorical $M$. The construction is achieved via a back-and-forth argument based on Neumann's Lemma (\cite[Lemma~2.1]{evatsaFree}).

The situation is similar for the unitary group, as we said before: a Kazhdan set is given by the action $\mbF\actson\ell^2(\mbF)$.

For an arbitrary PRP group, on the other hand, there might be no natural representations at hand. There are in fact PRP groups that do not have any non-trivial unitary representations, as first shown by Megrelishvili \cite{meg01} for the case of $\Homeo_+([0,1])$. Nevertheless, when they exist, the unitary representations induce imaginary sorts of the associated $\aleph_0$-cate\-gorical metric structure. More precisely:

\smallskip
\begin{itemize}[leftmargin=15pt]
\item[]\emph{Let $G=\Aut(M)$. If $G\actson H$ is a unitary representation and $\xi\in H$ is any vector, then the closed orbit $O=\ov{G\xi}$ is a metric imaginary sort of~$M$.}
\end{itemize}
\smallskip
Thereby all the information about the unitary representations of $G$ is again coded in $M^\meq$ (the metric analogue of the $M^\eq$ construction). This is a much weaker (other than basic and well-known) statement than the one about the oligomorphic case stated above, but we will use it to transfer certain model-theoretic configurations to the representations.

As in \cite{evatsaFree}, the crucial point of our proof is the construction of a \guillemotleft very free\guillemotright\ action $\mbF\actson M$ of a two-generated free group on a separably categorical structure. However, instead of a usual \guillemotleft internal\guillemotright\ back-and-forth construction based on subsets of $M$, we will perform an \guillemotleft external\guillemotright\ back-and-forth\footnote{This is an idea that I learned (for $\mbZ$ instead of $\mbF$, and using a different notion of independence) from a talk by Pierre Simon, in which he drew the picture that we reproduce here. I believe he was explaining Theorem~3.12 of \cite{kapsimTopRank}, although there is unfortunately no drawing in the paper.} by piling up isomorphic copies of $M$. At the same time, we replace Neumann's Lemma with (local) stability theory. Let us illustrate the construction here by showing how to build the following (under the assumption that $G$ has no non-trivial compact quotients):

\smallskip
\begin{itemize}[leftmargin=15pt]
\item[]\emph{There is an action $\mbZ\actson M$ such that every unitary representation of $G$ with no invariant unit vectors restricts to a multiple of the left-regular representation $\mbZ\actson\ell^2(\mbZ)$.}
\end{itemize}
\smallskip

The main tool for the construction is the \emph{stable independence relation}, which we denote by~$\ind$, and which makes sense in any (possibly unstable) metric theory. In \textsection\ref{prem:Stable-independence} we review the main properties of $\ind$, which condense some well-known facts of the theory of local stability. For some reason, this \guillemotleft semi-global\guillemotright\ notion of independence is hardly ever presented or considered in this way in the model-theoretic literature. It can however be very useful in the context of applications, as has already been shown in the works \cite{bit18} and \cite{ibatsaStrong}.

The construction begins with an isomorphic copy $M_0\simeq M$ inside some large elementary extension of $M$. We then pick an independent copy $M_1\ind M_0$ and a common elementary extension $M_{01}\succeq M_0,M_1$. This gives the smaller \guillemotleft cairn\guillemotright\ at the center of the picture below.

\vspace{15pt}

\begin{center}
\begin{tikzpicture}[xscale=2,yscale=1.5*sqrt(.75),xslant=-.5]

\tikzset{coordlabels/.style={font=\small,very thin,rectangle,rounded corners=6pt}}

\foreach \y in {-1,...,1}
\foreach \x in {\y,...,1}
\draw[very thick,shift={(\x,\y)}] (0,0)--(1,1)--(1,0);

\foreach \y in {-1,...,1}
\draw[very thick,dashed,shift={(2,\y)}] (0,0)--(1,1)--(1,0);

\foreach \x in {-1,...,2}
\draw ( \x+.5, -1) node [coordlabels]{$M_{\x}$};

\draw ( 0+.5, 0) node [coordlabels]{$M_{-10}$};
\draw ( 1+.5, 0) node [coordlabels]{$M_{01}$};
\draw ( 2+.5, 0) node [coordlabels]{$M_{12}$};

\draw ( 1+.5, 1) node [coordlabels]{$M_{-101}$};
\draw ( 2+.5, 1) node [coordlabels]{$M_{012}$};

\draw ( .6, -1.8) node [coordlabels]{{\Large $\xrightarrow[\tau]{\hspace{3cm}}$}};

\end{tikzpicture}
\end{center}
We now perform a step backwards: we choose models $M_{-10}$, $M_{-1}$ such that $M_{-10}\ind_{M_0} M_{01}$ and $M_{-10}M_{-1}M_0 \simeq M_{01}M_0 M_1$ (i.e., there is an isomorphism sending $M_{-10}$ to $M_{01}$, $M_{-1}$ to $M_0$ and $M_0$ to $M_1$). We complete the new cairn by choosing any common elementary extension $M_{-101}\succeq M_{-10},M_{01}$. The construction continues forward by choosing $M_{012}$ independent from $M_{-101}$ over $M_{01}$, together with submodels $M_{12}$ and $M_2$ such that the cairns dominated by $M_{-101}$ and $M_{012}$ are isomorphic. And so on.

At the end we get a directed family $\set{M_I}$ of elementary extensions of copies of $M$, indexed by the finite intervals of $\mbZ$. By $\aleph_0$-categoricity, we may identify its union $\ov{\bigcup_I M_I}$ with $M$. Then, by construction, the shift $M_I\mapsto M_{I+1}$ defines an automorphism $\tau\in G=\Aut(M)$. The group $Z\leq G$ generated by $\tau$ has the desired property.

Indeed, suppose $G\actson H$ is a unitary representation with no invariant unit vectors. We may assume $H$ has a cyclic vector $\xi$ (i.e., such that $H=\ov{\gen{G\xi}}$) and consider the closed orbit $O=\ov{G\xi}$ as earlier. Now, as $O$ is an imaginary sort of $M$, the submodels $M_I\preceq M$ induce substructures $O_I\preceq O$. Consider the corresponding generated subspaces $H_I=\ov{\gen{O_I}}\subseteq H$, as well as $H_\emptyset=\set{0}$. Then a key lemma will show that the independence conditions on the $M_I$ carry over to the subspaces $H_I$:

\smallskip
\begin{itemize}[leftmargin=15pt]
\item[]\emph{If $I,J\subseteq\mbZ$ are intervals and $K=I\cap J$, then $H_I\bigperp_{H_K}H_J$.}
\end{itemize}
\smallskip
Here, $\bigperp$ denotes the orthogonality relation. As a result, if we define $E_n=\ov{\gen{H_I:|I|=n}}$ and, for each interval $I$ of length $n$, $\widetilde H_I=H_I\ominus E_{n-1}$ (the orthogonal projection of $H_I$ to the orthogonal complement of $E_{n-1}$), we obtain that
$$H=\bigoplus_{n\in\mbN}\bigoplus_{|I|=n}\widetilde H_I.$$ Since $\tau$ permutes the subspaces $\widetilde H_I$ within each smaller direct sum, we see that $H$ splits as a multiple of the left-regular representation of $Z$, as desired.

Certainly, in order to prove Property~(T) we have to replace $\mbZ$ in the previous construction by a non-abelian free group $\mbF$. A crucial point then is the analysis of the \guillemotleft intervals\guillemotright\ $I\subseteq\mbF$ that appear in the construction, to ensure that they behave like the intervals of $\mbZ$ in a few key aspects. This analysis is carried out in Section~\ref{sInt}, which is independent of the rest. Then, in Section~\ref{sCai}, we build the cairn of models over a free group. The main theorem is proved subsequently in Section~\ref{sec:PropT}.

Finally, when the structure $M$ is stable and has the property that the structure generated by any two elementary substructures is again an elementary substructure, the construction described above simplifies considerably, as the reader may have already noticed. This allows for an explicit description of Kazhdan sets in some concrete examples. We do this in Section~\ref{sec:special-cases}.

\subsection*{Acknowledgments} I would like to thank Todor Tsankov for valuable discussions.

\noindent\hrulefill

\section{Preliminaries on metric imaginaries and stable independence}

Throughout the paper we will assume some familiarity with the model theory of metric structures as presented in \cite{bbhu08} or \cite{benusv10}. Nevertheless, we review in this section the main notions and facts that we will use.

Enthusiastic readers not so familiar with model theory may find some relief in \textsection\ref{apartado:a0-cat} below, where we recall some dynamical reformulations.

\subsection{Metric imaginaries}
Our main reference here is Ben Yaacov's work \cite[\textsection 1]{benCannot}, although we take a (formally) more general point of view in that we add imaginary sorts for definable subsets and quotients thereof. Earlier expositions on imaginaries in continuous logic can be found in \cite[\textsection 11]{bbhu08} and \cite[\textsection 5]{benusv10}.

Let $M$ be a structure in a possibly multi-sorted metric language. We recall that each sort of $M$ is a complete metric space of bounded diameter. The letters $x$, $y$, etc.\ will denote tuples of variables, usually finite or countably infinite. Say $x=(x_i)$ is countable. Each individual variable $x_i$ is attached to a particular sort $(S_i,d_i)$. Then by $M^x$ we will denote the corresponding \emph{product sort} of~$M$, that is, the product $\prod S_i$ endowed with a complete, bounded metric $d$ defined in terms of the metrics $d_i$ and compatible with the product uniformity (for instance, we can set $d=\max d_i$ in the case of finitely many factors, and $d=\sum 2^{-n_i}d_i$ in the countably infinite case, provided that $\sum 2^{-n_i}\text{diam}(S_i)<\infty$). An element of $M^x$ is an \emph{$x$-tuple} of $M$.

We will make no difference between formulas and definable predicates, which may thus depend on a countably infinite number of variables. By \emph{definable} we will always mean $\emptyset$-definable, that is, without parameters; if we allow parameters from a subset $B\subseteq M$, we will write \emph{$B$-definable}.

Suppose we are given a definable set $D\subseteq M^x$ and a definable pseudo-metric on $D$, say $\rho\colon D\to \mbR_{\geq 0}$. Then $\rho$ induces a metric on the classes of the equivalence relation $\sim_\rho$ that identifies the pairs of $x$-tuples in $D$ at $\rho$-distance 0. We let $M_\rho$ be the completion of the quotient $D/\sim_\rho$ with respect to the distance~$\rho$. The resulting (complete, bounded) metric space $M_\rho$ is called a \emph{metric imaginary sort} of $M$, and the elements of $M_\rho$ are \emph{metric imaginaries} of~$M$.

We denote by $M^\meq$ the collection of all metric imaginary sorts of $M$. This contains $M$ as a particular case (namely, $M=M_d$ where $d$ is the metric of $M$). The collection $M^\meq$ can then be turned into a multi-sorted metric structure, of which $M$ is a reduct, essentially by adding predicates to render the canonical inclusions $D\to M^x$ and projections $D\to M_\rho$ definable; see \cite{benCannot} for some details. For that matter, we may go further and add symbols for all the resulting definable predicates and functions. The language of $M^\meq$ is in any case huge, for it already includes an uncountable number of sorts. We should make clear that this construction is just a convenient artifice to handle the imaginaries in the models that we will consider; all structures we are truly interested in are separable in countable languages, and all arguments in the paper can be rewritten so as to consider only structures of this kind.

The structure $M^\meq$ is completely determined by $M$ in that, for instance, every elementary extension $M\preceq N$ lifts \emph{naturally} to an elementary extension $M^\meq\preceq N^\meq$. In particular, the automorphism groups $\Aut(M)$ and $\Aut(M^\meq)$ can be identified.

A structure $N$ (in a possibly different language than that of $M$) is \emph{interpretable in $M$} if it is isomorphic to a reduct of $M^\meq$, after a suitable identification of the languages of $N$ and of the reduct. It is \emph{bi-interpretable with $M$} if the associated expansions $N^\meq$ and $M^\meq$ are isomorphic, after a suitable identification of their languages. For more precise definitions concerning interpretations, see Ben Yaacov and Ka{\"{\i}}chouh's work \cite{benkai13}. The structures $M$ and $M^\meq$ are bi-interpretable.

\subsection{Types and algebraic elements} If $a$ is an $x$-tuple of elements of $M$ and $B\subseteq M$ is any subset, $\tp(a/B)$ denotes the \emph{type of $a$ over $B$}, that is, the function that maps each $B$-definable predicate $\phi(x)$ to the truth value $\phi(a)\in\mbC$ as calculated in $M$. If $a'$ is any other $x$-tuple, we write as usual $a'\equiv_B a$ to indicate that $a'$ and $a$ have the same type over $B$. If $B$ is empty, we write simply $a'\equiv a$.

An element $a\in M$ is \emph{definable over $B$} if in every elementary extension $N\succeq M$, the only element $a'\in N$ with $a'\equiv_B a$ is $a$ itself. Similarly, $a\in M$ is \emph{algebraic over $B$} if the set $\set{a'\in N:a'\equiv_B a}$ is compact in every elementary extension $N\succeq M$. If $a$ is definable over $\emptyset$ (respectively, algebraic over $\emptyset$), we say simply that it is \emph{definable} (respectively, \emph{algebraic}).

Finally, as is usual practice, we denote by $\dcl(B)$ the set of \emph{imaginary} elements of $M$ that are definable over $B$, that is, the set of all elements from $M^\meq$ definable over $B$. It is called the \emph{definable closure of $B$}. Similarly, the \emph{algebraic closure of $B$}, $\acl(B)$, is the set of imaginary elements of $M$ algebraic over $B$. In particular, we have $\dcl(M)=\acl(M)=M^\meq$. The \emph{real} algebraic closure of $B$ is the set $\acl(B)\cap M$.

\subsection{The $\aleph_0$-categorical case}\label{apartado:a0-cat}
Everything becomes more concrete if the structure $M$ is $\aleph_0$-cate\-gorical. We recall that if the language of $M$ is countable, $M$ is said $\aleph_0$-categorical if its first-order theory admits a \emph{unique separable model} up to isomorphism, namely $M$ (in particular, $M$ is separable). If the language underlying a given structure $M^*$ is not countable but $M^*$ is bi-interpretable with some $\aleph_0$-categorical structure $M$ in a countable language, we will say that $M^*$ is $\aleph_0$-categorical as well (in particular, each sort of $M^*$ is separable). Let us also recall that the automorphism group of an $\aleph_0$-categorical structure, endowed with the topology of pointwise convergence, is always a Polish Roelcke precompact group, and that every such group can be presented in this way (moreover, the structure can be chosen to be one-sorted, for instance with underlying space given by the completion of the group with respect to a left-invariant metric); see \cite[\textsection 2]{bentsa}.

As we were saying, in the $\aleph_0$-categorical setting most model-theoretic notions boil down to (topological) group-theoretic ones. Let us list some basic entries of this dictionary. Below, $M$ denotes an $\aleph_0$-categorical structure and $G=\Aut(M)$ is its automorphism group with the pointwise convergence topology. The product sorts $M^x$ are given the product topology and $G$ acts on them by the diagonal action. We have:
\begin{itemize}[leftmargin=15pt]
\item A subset $D\subseteq M^x$ is definable iff it is $G$-invariant and closed.
\item A function $\phi\colon M^x\to\mbC$ is a definable predicate iff it is $G$-invariant and continuous.
\item Two tuples $a,b\in M^x$ have same type iff their closed orbits coincide: $a\equiv b\iff\ov{Ga}=\ov{Gb}$.
\item An element $a\in M$ is definable iff it is $G$-invariant.
\item An element $a\in M$ is algebraic iff the closed orbit $\ov{Ga}$ is compact.
\item A one-sorted structure $N$ is interpretable in $M$ iff there is a continuous action $G\actson^\sigma N$ by (isometric) automorphisms of $N$ such that the space of closed orbits $N\sslash \sigma(G)$ is compact.
\end{itemize}
For the last point see \cite{benkai13}. We single out and sketch the proof of the following particular case, for which we may as well refer to \cite[Fact~1.12]{benCannot}.

\begin{lem}\label{lem:G-actson-O}
Let $M$ be an $\aleph_0$-categorical structure and $G$ its automorphism group. Suppose $G\actson^\sigma \Xi$ is a continuous isometric action of $G$ on a complete metric space $\Xi$. Let $\xi\in \Xi$ and let $O=\ov{\sigma(G)\xi}$ be its closed orbit. Then $O$ is isometric to an imaginary sort of $M$, and the action $G\actson^\sigma O$ coincides with the restriction of the action $G\actson M^\meq$.
\end{lem}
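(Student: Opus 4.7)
The plan is to realize $O$ inside $M^\meq$ as a quotient of a definable subset of a product sort of $M$, equipped with a definable pseudometric that transports $d_\Xi$. The key observation I would exploit is that, via an enumeration of a countable dense subset of $M$, the group $G$ identifies with a dense $G$-orbit inside $M^x$, and the metric $d_{M^x}$ restricts there to a left-invariant metric on $G$. Combined with the joint continuity of $\sigma$, this makes the orbit map $g\mapsto\sigma(g)\xi$ uniformly continuous, which is exactly what is needed to extend the pseudometric from $Gm$ to the closure of $Gm$ in $M^x$.

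First, I would fix an enumeration $m=(m_i)_{i\in\mbN}$ of a countable dense subset of $M$, which exists by separability of $\aleph_0$-categorical structures. The map $\iota\colon G\to M^x$, $g\mapsto gm$, is then $G$-equivariant, continuous, and a homeomorphism onto its image $Gm$, since the topology on $G$ is that of pointwise convergence and $\{m_i\}$ is dense in $M$ (with $G$ acting isometrically). The closure $D:=\ov{Gm}\subseteq M^x$ is a closed $G$-invariant subset, hence definable by the dictionary of \textsection\ref{apartado:a0-cat}. I then define $f\colon Gm\to O$ by $f(gm):=\sigma(g)\xi$, which is well-defined, $G$-equivariant, and has image $\sigma(G)\xi$ dense in $O$.

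The main technical point is to show that $f$ is uniformly continuous with respect to the restriction of $d_{M^x}$. Since $G$ acts on $M^x$ by isometries, $d_{M^x}(gm,hm)=d_{M^x}(m,g\inv h\cdot m)$, so the transported metric on $G$ is left-invariant. The joint continuity of $\sigma$ gives, for every $\epsilon>0$, a neighborhood of $1_G$ on which $\sigma(\cdot)\xi$ is $\epsilon$-close to $\xi$; unpacking this in terms of the metric yields a $\delta>0$ such that $d_{M^x}(km,m)<\delta$ implies $d_\Xi(\sigma(k)\xi,\xi)<\epsilon$. Applying this with $k=g\inv h$ and using the isometry of each $\sigma(h)$ gives uniform continuity of $f$. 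I then extend $f$ to a continuous $\bar f\colon D\to O$ and set $\rho(a,b):=d_\Xi(\bar f(a),\bar f(b))$: this is a continuous $G$-invariant pseudometric on $D$, and so a definable predicate by the dictionary. Thus $M_\rho$ is a metric imaginary sort of $M$.

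To conclude, I would observe that $\bar f$ factors through $D/\!\sim_\rho$ as an isometric embedding and extends to a surjective isometry $M_\rho\to O$, since the image of the complete space $M_\rho$ is closed in $O$ and contains the dense subset $\sigma(G)\xi$. Equivariance with the two $G$-actions is clear on the dense orbit $Gm$ by the computation $\bar f(h\cdot gm)=\sigma(hg)\xi=\sigma(h)\bar f(gm)$, and extends by continuity. The only real difficulty is the uniform continuity of $f$, but it follows cleanly from joint continuity of $\sigma$ and left-invariance of the metric on $G$; everything else is formal bookkeeping.
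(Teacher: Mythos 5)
Your argument is correct and follows essentially the same route as the paper's sketch: fix a countable tuple enumerating a dense subset of $M$, send $gm\mapsto\sigma_g\xi$, extend to the closed orbit $D=\ov{Gm}$, and pull back $d_\Xi$ to a continuous $G$-invariant (hence definable, by $\aleph_0$-categoricity) pseudometric $\rho$ on $D$, identifying $O$ with $M_\rho$. The only difference is that you spell out the uniform-continuity step via left-invariance of the transported metric, which the paper leaves implicit.
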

\begin{proof}
Fix some countable tuple $a\in M^x$ whose definable closure is the whole structure. Then the map $ga\in Ga\mapsto \sigma_g\xi\in \sigma(G)\xi$ is well defined, and it extends to a continuous $G$-equivariant map $\ov{Ga}\to O$. In turn, composing with the metric of $\Xi$ we obtain a $G$-invariant pseudo-metric $\rho\colon \ov{Ga}\times \ov{Ga}\to \mbR_{\geq 0}$. By $\aleph_0$-categoricity, $\rho$ is a definable pseudo-metric on the definable set $D=\ov{Ga}$, and we can identify $O$ with $M_\rho$.
\end{proof}

Another important fact about $\aleph_0$-categoricity is that it is preserved after naming the algebraic closure of the empty set. More precisely, let $M$ and $G$ be as above. We consider $M^\meq$ and we enrich its language with new constants for every element in $\acl(\emptyset)$. We denote the resulting expansion of $M^\meq$ by $M^\circ$. The point of this construction is that $M^\circ$ now satisfies $\dcl^{M^\circ}(\emptyset)=\acl^{M^\circ}(\emptyset)$, that is, every algebraic element of $M^\circ$ is definable.

We denote the automorphism group of $M^\circ$ by $G^\circ$. In other words,
$$G^\circ=\Aut(M^\meq/\acl(\emptyset))\leq \Aut(M^\meq)=G,$$
i.e., $G^\circ$ is the group of automorphisms of $M$ that fix all algebraic imaginary elements of $M$.

\begin{prop}\label{prop:Mcirc-is-cat}
If $M$ is $\aleph_0$-categorical, then so is $M^\circ$. Hence $G^\circ$ is Polish and Roelcke precompact.
\end{prop}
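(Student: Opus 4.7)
The plan is to deduce the statement from the fact that $G^\circ=\Aut(M^\circ)$ is Polish and Roelcke precompact, via the Ben Yaacov--Tsankov correspondence \cite{bentsa}. Polishness is immediate: $\acl(\emptyset)\subseteq M^\meq$ is the union of the compact $G$-orbits, hence a closed $G$-invariant set, and $G^\circ$ is its pointwise stabilizer, so $G^\circ$ is a closed (normal) subgroup of $G$.

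For Roelcke precompactness the central step is to show that $G/G^\circ$ is compact. Using the action of $G$ on each compact orbit closure $\overline{Ga}$ (for $a\in\acl(\emptyset)$), I would assemble a continuous homomorphism
\[
\rho\colon G\to\prod_{a\in\acl(\emptyset)}\Iso(\overline{Ga})
\]
into a compact topological group, with $\ker\rho=G^\circ$. The induced injection $\bar\rho\colon G/G^\circ\hookrightarrow\prod_a\Iso(\overline{Ga})$ should be verified to be a topological embedding, since both topologies on $G/G^\circ$---the quotient one from $G$ and the subspace one from the compact codomain---coincide with the topology of pointwise convergence on $\acl(\emptyset)$. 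This makes $G/G^\circ$ totally bounded inside a compact space and, being Polish (hence complete), compact.

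Compactness of $G/G^\circ$ then yields approximate oligomorphicity of $G^\circ\actson M$: for any $c\in M^n$ the open stabilizer $G_c$ has open (hence finite-index) image in the compact group $G/G^\circ$, so each $G$-orbit on $M^n$ splits into finitely many $G^\circ$-orbits. Combined with the already totally bounded $M^n/G$ (from the $\aleph_0$-categoricity of $M$), this gives total boundedness of $M^n/G^\circ$ for every $n$, and hence $G^\circ$ is Polish Roelcke precompact. Applying \cite{bentsa} to $G^\circ$ produces an $\aleph_0$-categorical structure $N$ in a countable language with $\Aut(N)\simeq G^\circ$, and standard duality \cite{benkai13} then identifies $M^\circ$ and $N$ as bi-interpretable, concluding that $M^\circ$ is $\aleph_0$-categorical.

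The trickiest step will be verifying that $\bar\rho$ is a topological embedding, that is, that pointwise convergence on $\acl(\emptyset)$ captures the entire quotient topology of $G/G^\circ$. This reflects the homogeneity feature of the $\aleph_0$-categorical setting: the action of $G$ on the algebraic closure essentially determines its behaviour modulo $G^\circ$, so that closeness of cosets in $G/G^\circ$ can be detected by looking only at finitely many algebraic coordinates.
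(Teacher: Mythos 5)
The decisive step of your plan---that $\bar\rho\colon G/G^\circ\to\prod_{a\in\acl(\emptyset)}\Iso(\ov{Ga})$ is a topological embedding, i.e.\ that the quotient topology on $G/G^\circ$ coincides with pointwise convergence on $\acl(\emptyset)$---is not a technical verification you can defer: it is essentially the whole content of the proposition, and you give no argument for it beyond restating it ("closeness of cosets can be detected on finitely many algebraic coordinates"). Soft topology cannot supply it: a continuous injective homomorphism of a Polish group into a compact group with dense image need not be an embedding and the quotient topology can be strictly finer (an irrational rotation $\mbZ\to\mbT$ is the standard example), so Roelcke precompactness/$\aleph_0$-categoricity must enter in an essential way, and nothing in the proposal says how. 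Note also that the paper's own development runs in the opposite direction: the identification of $G/G^\circ$ with the compact group $bG$, including the surjectivity of the restriction map $G\to bG$, is obtained in Corollary~\ref{cor:Bohr} \emph{as a consequence of} the $\aleph_0$-categoricity of $M^\circ$; the proposition itself is proved by citing \cite[Proposition~1.15]{benCannot}. So your scheme assumes (a form of) the statement it is meant to prove.

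There are two further problems downstream. First, "the open stabilizer $G_c$ has open (hence finite-index) image\dots\ so each $G$-orbit on $M^n$ splits into finitely many $G^\circ$-orbits" is classical-logic reasoning that fails for metric structures: stabilizers of finite tuples are closed but generally not open (e.g.\ in $\Unit(\ell^2)$ or $\Aut(\mu)$), and a $G$-orbit may split into continuum many $G^\circ$-orbits (take $M$ compact, so that every element is algebraic and $G^\circ$ is trivial). What you actually need is the approximate statement that $M^n$ modulo $G^\circ$ is totally bounded; this can be extracted from compactness of $G/G^\circ$ via a right-invariant metric and uniform continuity, but not by the finite-index argument as written. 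Second, the endgame is circular as phrased: deducing bi-interpretability of $M^\circ$ with the structure $N$ furnished by \cite{bentsa} from the isomorphism $\Aut(N)\simeq G^\circ$ via \cite{benkai13} requires both structures to be already known $\aleph_0$-categorical, which is the conclusion sought. This part can be repaired (expand $M$ by a countable dense family of $G^\circ$-invariant predicates, including the distance predicates to algebraic imaginaries, and check directly that this expansion is $\aleph_0$-categorical with automorphism group $G^\circ$ and bi-interpretable with $M^\circ$), but the repair has to be carried out, and in any case the gap in the embedding step remains the fundamental one.
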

\begin{proof}
See \cite[Proposition~1.15]{benCannot}.
\end{proof}

Now let $bG=\Aut_M(\acl(\emptyset))$ denote the group of partial elementary self-embeddings of $M^\meq$ whose domain is the algebraic closure of the empty set (note that any such partial self-embedding is indeed an isometry onto $\acl(\emptyset)$). Endowed with the topology of pointwise convergence, $bG$ becomes a \emph{compact} metrizable group, and the restriction map $G\to bG$ a continuous homomorphism. On the other hand, it is an immediate consequence of the $\aleph_0$-categoricity of $M^\circ$ that the restriction map $G\to bG$ is surjective. In other words, $bG=\set{g|_{\acl(\emptyset)}:g\in\Aut(M^\meq)}$.

\begin{cor}\label{cor:Bohr} If $M$ is $\aleph_0$-categorical, then we have the exact sequence of Polish groups:
$$1\to G^\circ\to G \to bG \to 1.$$
Moreover, $bG$ is the Bohr compactification of $G$ and $G^\circ$ is the smallest cocompact normal subgroup of~$G$.
\end{cor}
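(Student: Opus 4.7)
The plan is to verify in turn: (i) exactness of the sequence as Polish groups, (ii) the universal property identifying $bG$ with the Bohr compactification of $G$, and (iii) minimality of $G^\circ$ among cocompact normal subgroups.

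For (i), I would observe that the restriction map $r\colon G\to bG$, $g\mapsto g|_{\acl(\emptyset)}$, is a continuous group homomorphism whose kernel is exactly $G^\circ$ by the very definition of the latter, and whose surjectivity has already been noted immediately before the corollary as a consequence of Proposition~\ref{prop:Mcirc-is-cat}. Since $G^\circ=\ker r$ is closed and $bG$ is Polish (being compact metrizable), the open mapping theorem for Polish groups applies to $r$ and yields the topological isomorphism $G/G^\circ\simeq bG$.

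The crux lies in (ii). Given a continuous homomorphism $\pi\colon G\to K$ into a compact Hausdorff group, it suffices (since $r$ is already surjective) to show that $G^\circ\subseteq\ker\pi$. I would equip $K$ with a compatible bi-invariant metric and consider the isometric continuous action $G\actson K$ defined by $g\cdot x=\pi(g)x$. For any $x\in K$, the orbit closure $\ov{G\cdot x}\subseteq K$ is compact, so by Lemma~\ref{lem:G-actson-O} it is isometric to an imaginary sort of $M$ on which the $G$-action coincides with the restriction of $G\actson M^\meq$. Compactness of the orbit then means every one of its points is an algebraic element (see the dictionary of \textsection\ref{apartado:a0-cat}), and is therefore fixed by $G^\circ$ by definition. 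Taking $x$ to be the identity of $K$ gives $\pi(g)=e$ for every $g\in G^\circ$, so $\pi$ factors through $r$ as desired. Since the image of $r$ is all of $bG$ and hence dense, $bG$ is the Bohr compactification of $G$.

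For (iii), if $N\trianglelefteq G$ is normal with $G/N$ compact Hausdorff (so in particular $N$ is closed), then applying the universal property from (ii) to the quotient map $G\to G/N$ forces $G^\circ\subseteq N$; combined with the fact that $G/G^\circ\simeq bG$ is itself compact, this shows $G^\circ$ is the smallest such subgroup. The only step with genuine model-theoretic content is (ii), where Lemma~\ref{lem:G-actson-O} is used to translate compact orbits of continuous $G$-actions into compact imaginary sorts and hence into $\acl(\emptyset)$; everything else reduces to routine topological group theory.
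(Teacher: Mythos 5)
Your strategy for the crucial step (ii) is essentially the paper's: use Lemma~\ref{lem:G-actson-O} to turn a compact orbit closure into an imaginary sort consisting of algebraic elements, which $G^\circ$ fixes by definition, and then specialize at the identity to get $G^\circ\subseteq\ker\pi$. But there is a genuine gap at the very start of that step: you ``equip $K$ with a compatible bi-invariant metric'', while $K$ is only assumed to be compact Hausdorff. A compact group admits a compatible metric (bi-invariant or not) if and only if it is metrizable, and the Bohr compactification must be tested against \emph{arbitrary} compact Hausdorff groups, which can fail to be metrizable (think of the Bohr compactification of $\mbZ$). Replacing $K$ by $\ov{\pi(G)}$ does not by itself help: for a general Polish group the closed image under a continuous homomorphism into a compact group need not be metrizable either. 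Since Lemma~\ref{lem:G-actson-O} requires a continuous \emph{isometric} action on a complete metric space, the argument as written does not get off the ground for non-metrizable $K$; this is precisely where Roelcke precompactness has to be used a second time, beyond its role in $\aleph_0$-categoricity.

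The paper closes this hole as follows: any continuous homomorphism with dense image into a compact group factors through the Roelcke compactification of $G$, and the latter is metrizable when $G$ is Roelcke precompact Polish; hence, after replacing $K$ by the closure of $\pi(G)$, the group $K$ is compact \emph{metrizable}, admits an invariant metric, and Lemma~\ref{lem:G-actson-O} applies --- in the paper directly to $K$ itself, since the orbit of $1_K$ is dense, so $K$ is literally an imaginary sort made of algebraic elements. An alternative repair, closer to your formulation, is to invoke Peter--Weyl: $K$ embeds into a product of finite-dimensional unitary groups, so it suffices to show $G^\circ$ dies in every continuous homomorphism into some $\Unit(n)$, which is compact metrizable, and then your argument (orbit closure of any point, or of the identity) runs verbatim. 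With either reduction your proof is complete; parts (i) and (iii) are correct and agree with how the paper reads the statement.
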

\begin{proof}
The moreover part also follows from \cite{benCannot}, but let us sketch the proof. It suffices to show that if $\pi\colon G\to K$ is a continuous homomorphism with dense image into a compact group $K$, then $G^\circ$ is in the kernel of $\pi$. Now, it is a general fact that every such $\pi$ factors through the \emph{Roelcke compactification} of $G$, and the latter is metrizable when $G$ is a Roelcke precompact Polish group. It follows that $K$ is a compact metrizable group, and hence admits an invariant metric. Thus $\pi$ induces a continuous \emph{isometric} action $G\actson K$, and the $G$-orbit of $1_K$ is dense in $K$ by hypothesis. Hence, by Lemma~\ref{lem:G-actson-O}, $K$ is a metric imaginary sort of $M$ and the action $G\actson K$ is given by the action of $G$ by automorphisms of $M$. Since $K$ is compact, every element of $K$ is an \emph{algebraic} metric imaginary of $M$, and thus is fixed by $G^\circ$. This shows that $G^\circ\subseteq\ker(\pi)$.
\end{proof}

\subsection{Stable independence}\label{prem:Stable-independence}

Shelah's fundamental theory of stability was recast in the setting of continuous logic in the work \cite{benusv10} by Ben Yaacov and Usvyatsov. See also \cite{benGro} for a functional analysis approach based on results of Grothendieck predating those of Shelah.

In this subsection we fix an ambient structure $\wM$, which is a very saturated model of an arbitrary (possibly unstable) metric first-order theory. All sets and tuples we consider come from $\wM$ (or $\wM^\meq$) and are small relative to the order of saturation of $\wM$; by a \emph{submodel} we mean a small elementary substructure of $\wM$.

A formula $\phi(x,y)$ is said \emph{stable} if one cannot find $\epsilon>0$ and infinite sequences $a_i\in\wM^x$, $b_j\in\wM^y$ such that $|\phi(a_i,b_j)-\phi(a_j,b_i)|\geq\epsilon$ for every $i,j\in\mbN$, $i\neq j$. An equivalent definition is as follows. Given an $x$-tuple $a$ and a submodel $N$, we associate to the type $p=\tp(a/N)$ its \emph{$\phi$-definition}, $\dd{p}{\phi}\colon N^y\to\mbC$, given by
$$\dd{p}{\phi}(b)\coloneqq\phi(a,b).$$
Then the formula $\phi(x,y)$ is stable iff for every $x$-tuple $a$ and every submodel $N$, the $\phi$-definition of $\tp(a/N)$ is an $N$-definable predicate. The theory of $\wM$ is called stable if all formulas $\phi(x,y)$ are stable.

These notions also make sense, of course, for formulas defined on imaginary sorts, i.e., formulas of the structure $\wM^\meq$.

Stable independence is related to the possibility of having $\phi$-definitions that are all definable (modulo imaginaries and algebraicity) over some given smaller subset of $N$. More precisely, if $M$ and $N$ are submodels of $\wM$ and $B\subseteq N$ is any subset, we say that $M$ is \emph{stably independent from $N$ over $B$} if for every $x$-tuple $a$ from $M^\meq$ and every stable formula $\phi(x,y)$ of $\wM^\meq$, the $\phi$-definition of $\tp(a/N^\meq)$ is $\acl(B)$-definable in $\wM^\meq$. In that case we write $M\ind_B N$, or simply $M\ind N$ if $B=\emptyset$.

\begin{rem}
Since later we will be particularly interested in stable formulas on imaginary sorts, we have chosen our definition of stable independence so that it is clear that one has $M\ind_{B}N$ in $\wM$ if and only if $M^\meq\ind_{B}N^\meq$ in $\wM^\meq$. It is possible to give an equivalent definition that quantifies only over formulas $\phi(x,y)$ in the language of $\wM$: for this one has to consider all formulas $\phi(x,y)$ that are stable when restricted to a product $X\times Y$ of some definable subsets $X\subseteq\wM^x$, $Y\subseteq\wM^y$.
\end{rem}

We state below some of the main properties of the ternary relation $\ind$ in the general unstable setting. For comparison with the globally stable case see \cite[\textsection 8.2]{benusv10}.

\begin{prop}\label{prop:prop-of-ind}
The stable independence relation (within a large model of a possibly unstable metric theory) enjoys the following properties.

Below, $M$, $N$, etc.\ denote submodels, and $B$ is a subset of $N$. Moreover, the submodels and sets considered are given some implicit enumeration so that their types can be compared.
\begin{enumerate}
\item\label{item:invariance} (Invariance) If $M\ind_B N$ and $MBN\equiv M'B'N'$, then $M'\ind_{B'}N'$.
\item\label{item:monotonicity} (Monotonicity) If $M\subseteq M'$, $N\subseteq N'$ and $M'\ind_B N'$, then $M\ind_B N$.
\item\label{item:transitivity} (Transitivity) Suppose $N\subseteq N'$. Then $M\ind_B N'$ if and only if $M\ind_B N$ and $M\ind_N N'$.
\item\label{item:existence} (Existence) For every $M,B,N$ there exists $M'$ such that $M'\equiv_B M$ and $M'\ind_B N$.
\item\label{item:weak-sym} (Weak symmetry) Suppose $B\subseteq M$. If $M\ind_B N$, then $N\ind_B M$.
\item\label{item:anti-ref} (Stable anti-reflexivity) If $M\ind_B N$ and $S$ is a sort with a stable metric, then $M\cap N\cap S\subseteq\acl(B)$.
\end{enumerate}
\end{prop}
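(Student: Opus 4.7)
The plan is to move to $\wM^\meq$ (equivalent by the Remark) and reduce each item to a standard fact from local stability theory, handled one stable formula at a time. The key device is the \emph{canonical base}: for every stable $\phi(x,y)$ and every $\phi$-type $p|_\phi$, with $p=\tp(a/N^\meq)$, there is a smallest algebraically closed set $\Cb_\phi(p)\subseteq\wM^\meq$ over which $\dd{p}{\phi}$ is definable. Then $M\ind_B N$ is equivalent to the statement that $\Cb_\phi(\tp(a/N^\meq))\subseteq\acl(B)$ for every finite $a\in M^\meq$ and every stable $\phi$.

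With this reformulation, items \eqref{item:invariance}, \eqref{item:monotonicity} and the easy direction of \eqref{item:transitivity} are immediate. Automorphisms of $\wM$ transport canonical bases to canonical bases and $\acl(B)$ to $\acl(B')$, giving invariance. Monotonicity holds because the $\phi$-definition of $\tp(a/N^\meq)$ is just the restriction to $N^y$ of the $\phi$-definition over $N'^\meq$, and every tuple from $M^\meq$ lives in $M'^\meq$. The direction ``$M\ind_B N'$ implies $M\ind_B N$ and $M\ind_N N'$'' combines monotonicity with the trivial inclusion $\acl(B)\subseteq\acl(N)$. For \eqref{item:anti-ref}, if $a\in M\cap N\cap S$ with $d_S$ stable, then the function $b\mapsto d_S(a,b)$ is the $d_S$-definition of $\tp(a/N^\meq)$, hence $\acl(B)$-definable, and its unique zero $a\in N^\meq$ realizes a compact $\acl(B)$-definable set, so $a\in\acl(B)$.

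The non-trivial half of transitivity uses \emph{stationarity}: if $M\ind_B N$ and $M\ind_N N'$, then, writing $p=\tp(a/N^\meq)$ and $p'=\tp(a/N'^\meq)$, one has $\Cb_\phi(p')\subseteq\acl(N)$ by hypothesis; but $p'|_\phi$ is the unique non-forking extension of $p|_\phi$ to $N'^\meq$, so $\Cb_\phi(p')=\Cb_\phi(p)\subseteq\acl(B)$. Existence \eqref{item:existence} proceeds dually: for each stable $\phi$, the type $\tp(a/\acl(B))|_\phi$ is stationary and admits a unique global non-forking extension, whose $\phi$-definition is $\acl(B)$-definable; these assemble across $\phi$ and across the tuples of $M$ into a single consistent complete type over $N$ extending $\tp(M/B)$, which is realized by the desired $M'$ by saturation of $\wM$.

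The main obstacle is weak symmetry \eqref{item:weak-sym}, which is the local, metric analogue of the classical symmetry of forking and genuinely requires $B\subseteq M$. The argument rests on the symmetric characterization of $\phi$-non-forking: for a stable formula $\phi(x,y)$, a tuple $a\in M^\meq$ and a tuple $b\in N^\meq$, the equality $\dd{\tp(a/Bb)}{\phi}(b)=\phi(a,b)$ is equivalent to the corresponding equality for $\phi^*(y,x)=\phi(x,y)$ and $\tp(b/Ba)$. The hypothesis $B\subseteq M$ is exactly what is needed in order to apply this dual formulation coherently: it converts the condition $\Cb_\phi(\tp(a/N^\meq))\subseteq\acl(B)$ (for all $a\in M^\meq$ and stable $\phi$) into the symmetric condition $\Cb_{\phi^*}(\tp(b/M^\meq))\subseteq\acl(B)$ (for all $b\in N^\meq$), giving $N\ind_B M$. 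Writing out this equivalence cleanly in the metric, sort-sensitive setting of $\wM^\meq$ is where the only real work lies.
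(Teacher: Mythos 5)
Your handling of invariance, monotonicity, transitivity and stable anti-reflexivity is correct and, once the canonical-base packaging is unwrapped, it is essentially the paper's argument (the hard direction of transitivity rests, in both versions, on the fact that two $N$-definable $\phi$-definitions agreeing on the model $N$ agree everywhere). The genuine gap is weak symmetry, which is the one item requiring real work, and you do not prove it. The ``symmetric characterization of $\phi$-non-forking'' you state is vacuous as written: $\dd{\tp(a/Bb)}{\phi}(b)=\phi(a,b)$ holds for \emph{every} $a,b$ by the very definition of the $\phi$-definition, so it cannot detect non-forking, and no argument is offered for the claimed conversion of $\Cb_\phi(\tp(a/N^\meq))\subseteq\acl(B)$ into $\Cb_{\phi^*}(\tp(b/M^\meq))\subseteq\acl(B)$ --- you explicitly defer it (``where the only real work lies''). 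The paper's proof of this item is not a routine rewriting: it first uses the existence property to produce $c'\equiv_{\acl(B)}c$ whose $\phi$-type over $M$ has an $\acl(B)$-definable definition, then applies the continuous Harrington lemma (\cite[Proposition~7.16]{benusv10}) to identify $\dd{p}{\psi}(c')$ with $\dd{q'}{\phi}(a)$, and finally uses the hypothesis $B\subseteq M$ to conclude that the definition of $\tp(c/M)$, agreeing on all of $M$ with an $\acl(B)$-definable predicate, is itself $\acl(B)$-definable (an $\acl(B)$-definable predicate is determined by its restriction to the model $M\supseteq B$). None of these steps appears in your sketch, so the item is simply missing.

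There is also a false step in your existence argument: for a stable $\phi$, the restricted type $\tp(a/\acl(B))|_\phi$ is \emph{not} stationary in general. Already in classical logic, take the theory of an equivalence relation with exactly two infinite classes, $\phi(x,y)=E(x,y)$ and $B=\emptyset$: the $\phi$-type of any element over $\acl(\emptyset)$ (computed in $M^{\eq}$) is trivial, yet it has two distinct extensions to a model, each with $\acl(\emptyset)$-definable $\phi$-definition, since the two class imaginaries lie in $\acl(\emptyset)$. What is true is that the \emph{full} type over $\acl(B)$ pins down the $\acl(B)$-definable $\phi$-definitions, and the subsequent ``assembly across $\phi$ and across the tuples of $M$'' into a single consistent type extending $\tp(M/B)$ is exactly the non-trivial content of the result the paper invokes at this point, namely \cite[Corollary~2.4]{benStabGroups}; it does not follow by merely juxtaposing the chosen $\phi$-types. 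Citing that corollary, as the paper does, repairs the existence item; weak symmetry, however, still needs an actual proof along the lines above.
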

\begin{proof} Invariance and monotonicity are clear. For existence we can refer to Ben Yaacov's work \cite[Corollary~2.4]{benStabGroups}. The proofs of (\ref{item:transitivity}), (\ref{item:weak-sym}) and (\ref{item:anti-ref}) are very much like in the globally stable case, but we write them down here for the convenience of the reader ---see also Remark~\ref{rem:weak-sym} below for a discrepancy with the stable setting. We work within $\wM^\meq$ instead of $\wM$, so all submodels considered are elementary substructures of the former.

In (\ref{item:transitivity}), the left-to-right implication is clear. For the converse, let $a$ be an $x$-tuple from $M$ and take $p=\tp(a/N)\subseteq p'=\tp(a/N')$. If $M\ind_N N'$ and $\phi(x,y)$ is a stable formula, then $\dd{p}{\phi}$ and $\dd{p'}{\phi}$ are both $N$-definable predicates that agree on the submodel $N$; hence they coincide everywhere in $\wM^\meq$. Thus if $\dd{p}{\phi}$ is $\acl(B)$-definable then so is $\dd{p'}{\phi}$, i.e., if $M\ind_B N$ then $M\ind_B N'$ as well.

Now we prove (\ref{item:weak-sym}). Suppose $B\subseteq M\cap N$ and $M\ind_B N$. Let $c$ be an $x$-tuple from $N$, $q=\tp(c/M)$, and let $\phi(x,y)$ be a stable formula. Using existence, we can find $c'$ in $\wM^\meq$ such that $\tp(c/\acl(B))\subseteq q'=\tp(c'/M)$ and the $\phi$-definition of $q'$ is $\acl(B)$-definable. Now let $a\in M^y$, $p=\tp(a/M)$, and let us denote by $\psi(y,x)$ the formula $\psi(y,x)=\phi(x,y)$. Then we have:
$$\dd{q}{\phi}(a)=\phi(c,a) =\psi(a,c)=\dd{p}{\psi}(c)=\dd{p}{\psi}(c'),$$
where the last equality holds because $\dd{p}{\psi}$ is $\acl(B)$-definable and $c\equiv_{\acl(B)}c'$. On the other hand, by the continuous version of Harrington's lemma (see \cite[Proposition~7.16]{benusv10}), we have $\dd{p}{\psi}(c')=\dd{q'}{\phi}(a)$. Thus we get $\dd{q}{\phi}(a)=\dd{q'}{\phi}(a)$ for every $a$ in $M$, and we conclude that $\dd{q}{\phi}$ is $\acl(B)$-definable. Since this holds for every $c\in N^x$ and every stable $\phi$, we have $N\ind_B M$.

Finally, for (\ref{item:anti-ref}), suppose $M\ind_B N$ and $a\in M\cap N\cap S$. Let $\rho(x,y)$ be the metric in the sort $S$, which we assume to be stable. If $p=\tp(a/N)$, then $\dd{p}{\rho}(b)=\rho(a,b)$ for every $b\in N$, and $\dd{p}{\rho}$ is $\acl(B)$-definable. In other words, the function on $N\cap S$ that gives the distance to the element $a\in N\cap S$ is $\acl(B)$-definable, and this is equivalent to saying that $a\in\acl(B)$.
\end{proof}

\begin{rem}\label{rem:weak-sym}
In the property of weak symmetry, the assumption $B\subseteq M$ is essential and not just because our definition of $N\ind_B M$ required $B\subseteq M$. Indeed, one may give a natural definition of $A\ind_B C$ for arbitrary sets $A,B,C$, and with this definition the properties given above still hold (with more involved proofs). However, $A\ind_B C$ need not imply $C\ind_B A$ if $B$ is not contained in $A$, contrary to the situation in stable theories.
\end{rem}

\subsection{Independence in Hilbert spaces}

Hilbert spaces are a nice example of both $\aleph_0$-catego\-ricity and stability. There are many ways to present a (separable, infinite-dimensional, complex) Hilbert space~$H$ as a metric structure in the sense of continuous logic, all of them bi-interpretable. A minimal one is to take just the unit ball (or the unit sphere) of $H$ with no further structure than its metric and multiplication by the scalar~$i$. The inner product is definable from these, and all the other $n$-balls of $H$, as well as the vector space operations defined between them, become imaginary sorts and definable operations of this structure, respectively.

The theory of the resulting structure is $\aleph_0$-categorical: there is only one separable, infinite-dimensional, complex Hilbert space. It is also stable, and the associated independence relation coincides with the usual notion of orthogonality.

More precisely, given closed subspaces $H_0,H_1,H_2$ of a Hilbert space $H$ with $H_1\subseteq H_2$, let us write
$$H_0\bigperp_{H_1}H_2$$
to say that the orthogonal projection $\pi_2\colon H\to H_2$ agrees on $H_0$ with the orthogonal projection $\pi_1\colon H\to H_1$, that is, $\pi_2(a)=\pi_1(a)$ for every $a\in H_0$. When $H_1$ is the trivial subspace we write simply $H_0\bigperp H_2$, and this amounts of course to saying that the subspaces $H_0$ and $H_2$ are orthogonal.

\begin{prop}\label{prop:ind-vs-ort}
Let $H_0$ and $H_1\subseteq H_2$ be Hilbert subspaces of $H$. Then $H_0\bigperp_{H_1}H_2$ if and only if $H_0\ind_{H_1}H_2$.
\end{prop}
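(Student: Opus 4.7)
The plan rests on two standard facts about the continuous theory of Hilbert spaces: every formula is stable, and types over a set $A$ are determined by the inner products of the variables with the elements of $A$. In particular $\acl(A)$ equals the closed linear span of $A$, so $\acl(H_1) = H_1$ in our setting, and it suffices to check the $\phi$-definition condition for the basic stable formula $\phi(x,y) = \mathrm{Re}\ip{x,y}$ (together with its $\mathrm{Im}$- and multi-variable analogues), since every stable formula is a uniform limit of combinations of these.

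For the forward direction, assume $H_0 \bigperp_{H_1} H_2$. Given $a \in H_0$, the conditional orthogonality means $a - \pi_{H_1}(a) \perp H_2$, so $\ip{a,b} = \ip{\pi_{H_1}(a), b}$ for every $b \in H_2$. Hence the $\phi$-definition of $p = \tp(a/H_2)$, namely $\dd{p}{\phi}(b) = \mathrm{Re}\ip{\pi_{H_1}(a), b}$ on $H_2$, is the restriction of the $H_1$-definable predicate $b \mapsto \mathrm{Re}\ip{\pi_{H_1}(a), b}$ on the ambient space. This gives $H_0 \ind_{H_1} H_2$.

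For the converse, assume $H_0 \ind_{H_1} H_2$, fix $a \in H_0$, and set $c = \pi_{H_2}(a) \in H_2$; I aim to show $c \in H_1$, which is the conditional orthogonality. The $\phi$-definition $\dd{p}{\phi}(b) = \mathrm{Re}\ip{c,b}$ on $H_2$ extends, by hypothesis, to an $H_1$-definable predicate $\psi$ on the whole ambient space. Invariance of $\psi$ under the unitary acting as $e^{i\theta}$ on $H_1^\perp$ and as the identity on $H_1$ (which fixes $H_1$ pointwise) forces, for every $b \in H_2 \ominus H_1$, the equality $\mathrm{Re}\bigl(e^{-i\theta}\ip{c,b}\bigr) = \mathrm{Re}\ip{c,b}$ for all $\theta \in \mathbb{R}$, hence $\ip{c,b} = 0$. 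Thus $c \perp (H_2 \ominus H_1)$ inside $H_2$, giving $c \in H_1$ as desired.

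The two points where care is needed are the identification $\acl(H_1) = H_1$ (so that $\acl(H_1)$-definability is plain $H_1$-definability) and the reduction of the check of stable independence to the inner-product formulas; both are standard facts for Hilbert space theory, and once granted the proof is a direct computation using the symmetry of $H_1^\perp$ under unitaries fixing $H_1$.
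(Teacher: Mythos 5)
Your two directions are set up sensibly, and the forward implication is fine: given $H_0\bigperp_{H_1}H_2$, writing $\ip{a,b}=\ip{\pi_{H_1}(a),b}$ for $b\in H_2$ and using quantifier elimination (types in Hilbert spaces are determined by inner products) does show all relevant definitions are over $H_1$. Note that the paper itself offers no argument here, simply citing \cite[Theorem~15.8]{bbhu08}, so any self-contained proof is ``a different route''. However, there is a genuine gap in your converse, at the step where you pass from ``the $\phi$-definition is $\acl(H_1)$-definable'' to ``the defining predicate $\psi$ is invariant under the unitary $u_\theta$ acting as the identity on $H_1$ and as $e^{i\theta}$ on $H_1^\perp$''. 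In the definition of $\ind$, $\acl(H_1)$ is computed in $\wM^\meq$ and so contains \emph{imaginary} elements; an $\acl(H_1)$-definable predicate is only guaranteed to be invariant under automorphisms fixing $\acl(H_1)$ pointwise, and $u_\theta$, while fixing $H_1$ pointwise, need not fix algebraic imaginaries over $H_1$ (it merely keeps each inside its compact orbit). Your justification, ``$\acl(A)$ equals the closed linear span of $A$'', is the statement about the real (ball) sorts and does not address this; what you actually need is that over a subspace the theory of Hilbert spaces has no nontrivial algebraic imaginaries, i.e.\ $\acl(H_1)=\dcl(H_1)$ in $\wM^\meq$.

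That fact is true, but it is not a formality: it amounts to the triviality of the Bohr compactification (absence of nontrivial compact quotients) of the unitary group of $H_1^\perp$, which one obtains from extreme amenability of $\Unit(\ell^2)$ or from the Kirillov--Ol'shanski classification --- precisely the phenomenon the paper is careful about when distinguishing $G$ from $G^\circ$. So either invoke that fact explicitly, or rework the converse to avoid it, e.g.\ by using that $\acl(H_1)$-definability forces the family of conjugates of the definition under $\Aut(\wM/H_1)$ to be totally bounded in the sup-metric, and then producing unitaries fixing $H_1$ pointwise and preserving $H_2$ that send $c-\pi_{H_1}(c)$ to infinitely many pairwise orthogonal vectors of the same norm. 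Be aware that your circle $\set{u_\theta}$ alone cannot run such an orbit argument (a circle of conjugates is always precompact), and the orthogonality trick needs $H_2\ominus H_1$ infinite dimensional, so some additional input --- the $\acl=\dcl$ fact in $\wM^\meq$, or simply the citation the paper uses --- is genuinely required to close the argument.
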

\begin{proof}
See, for instance, \cite[Theorem~15.8]{bbhu08}.
\end{proof}

Let us review some additional properties of the orthogonality relation. Given Hilbert subspaces $H_1,H_2$ of some ambient space $H$, we will denote by $H_1H_2\coloneqq\ov{\langle H_1,H_2\rangle}$ the closed subspace generated by them. Also, we will denote by $H_2\ominus H_1$ the orthogonal projection of $H_2$ to the orthogonal complement of $H_1$. Note that $H_2\ominus H_1\subseteq H_1H_2$.

We extend our definition of the orthogonality relation to include triples $H_0,H_1,H_2$ for which $H_1$ need not be a subspace of $H_2$: in that case $H_0\bigperp_{H_1}H_2$ means $H_0\bigperp_{H_1}H_1H_2$.

Now the following is an easy exercise.

\begin{lem}\label{lem:orthogonality-equivalence}
For any subspaces $H_0,H_1,H_2$ we have $H_0\bigperp_{H_1}H_2$ if and only if $H_0\ominus H_1\bigperp H_2\ominus H_1$.
\end{lem}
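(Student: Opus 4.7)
The plan is to reduce both sides of the equivalence to a single condition involving the projection onto $H_2 \ominus H_1$, exploiting that this subspace is orthogonal to $H_1$ and sits inside $H_1 H_2$.

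First I would unpack the definition on the left: by the convention preceding the lemma, $H_0 \bigperp_{H_1} H_2$ means $H_0 \bigperp_{H_1} H_1 H_2$, i.e., for every $a \in H_0$ the orthogonal projection $\pi_{H_1 H_2}(a)$ coincides with $\pi_{H_1}(a)$. The vector $\pi_{H_1 H_2}(a) - \pi_{H_1}(a)$ lies in $H_1 H_2$ and is orthogonal to $H_1$, so it lies in $(H_1 H_2) \ominus H_1$. A short verification using $H_1 H_2 = \overline{H_1 + H_2}$ and the fact that projection onto $H_1^\perp$ is continuous shows that $(H_1 H_2) \ominus H_1 = H_2 \ominus H_1$. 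Hence the condition $\pi_{H_1 H_2}(a) = \pi_{H_1}(a)$ is equivalent to $\pi_{H_2 \ominus H_1}(a) = 0$.

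Next I would rewrite this vanishing condition in terms of $H_0 \ominus H_1$. Since $H_2 \ominus H_1 \perp H_1$, the projection $\pi_{H_2 \ominus H_1}$ factors through the projection onto $H_1^\perp$, giving
$$\pi_{H_2 \ominus H_1}(a) \;=\; \pi_{H_2 \ominus H_1}\bigl(a - \pi_{H_1}(a)\bigr)$$
for every $a \in H$. As $a$ ranges over $H_0$, the vectors $a - \pi_{H_1}(a)$ range over a dense subset of $H_0 \ominus H_1$. Therefore the condition $\pi_{H_2 \ominus H_1}(a) = 0$ for all $a \in H_0$ is equivalent, by continuity of $\pi_{H_2 \ominus H_1}$, to $\pi_{H_2 \ominus H_1}(v) = 0$ for every $v \in H_0 \ominus H_1$, which is precisely $H_0 \ominus H_1 \bigperp H_2 \ominus H_1$.

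The only non-routine point is the identity $(H_1 H_2) \ominus H_1 = H_2 \ominus H_1$, and this is where I would be most careful; everything else amounts to tracking what orthogonal projections do modulo $H_1$. Once this identity is in hand, the two reductions above chain into the desired equivalence.
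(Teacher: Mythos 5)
Your argument is correct, and since the paper dismisses this lemma as an easy exercise, your reduction via the decomposition $H_1H_2 = H_1 \oplus \bigl((H_1H_2)\cap H_1^{\perp}\bigr)$, which turns both sides into the vanishing of $\pi_{H_2\ominus H_1}$ on $H_0$ (equivalently on $H_0\ominus H_1$), is exactly the intended computation. One small caveat at the step you rightly flag as the non-routine one: the literal identity $(H_1H_2)\ominus H_1 = H_2\ominus H_1$ can fail, because $H_2\ominus H_1=\pi_{H_1^{\perp}}(H_2)$ need not be closed when $H_1+H_2$ is not closed, whereas $(H_1H_2)\ominus H_1=(H_1H_2)\cap H_1^{\perp}$ always is; your verification in fact shows the two sets have the same closure, and since orthogonality and the vanishing of the relevant projections depend only on closures, the proof goes through unchanged.
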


Finally, given vectors $a,b\in H$, we may write expressions like $a\bigperp_{H_1}H_2$ or $a\bigperp_{H_1}b$, which mean that the associated one-dimensional subspaces satisfy the corresponding orthogonality relations. The independence relation in Hilbert spaces enjoys the following very strong property.

\begin{prop} (Triviality) For any Hilbert subspaces $H_0,H_1,H_2$ we have $H_0\bigperp_{H_1}H_2$ if and only if $a\bigperp_{H_1}b$ for every pair of vectors $a\in H_0$ and $b\in H_2$.
\end{prop}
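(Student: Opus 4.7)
The plan is to reduce both sides of the stated equivalence to unrelativized orthogonality of certain complements via Lemma~\ref{lem:orthogonality-equivalence}, and then invoke the elementary fact that two closed subspaces of a Hilbert space are orthogonal if and only if every pair of vectors drawn from them is orthogonal.

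In more detail, I would let $\pi$ denote the orthogonal projection onto $H_1$ and set $K_i\coloneqq H_i\ominus H_1$ for $i\in\{0,2\}$. By Lemma~\ref{lem:orthogonality-equivalence}, the condition $H_0\bigperp_{H_1}H_2$ is then equivalent to the unrelativized orthogonality $K_0\bigperp K_2$. Applying the same lemma to the one-dimensional subspaces spanned by individual vectors $a\in H_0$ and $b\in H_2$ (and unfolding the extended definition of $\bigperp_{H_1}$, which replaces $\mathrm{span}(b)$ by $H_1+\mathrm{span}(b)$ so that the hypothesis of Lemma~\ref{lem:orthogonality-equivalence} is met), I obtain that $a\bigperp_{H_1}b$ is equivalent to $(a-\pi(a))\perp(b-\pi(b))$.

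Consequently, the right-hand condition of the proposition translates into the statement that $a-\pi(a)\in K_0$ is orthogonal to $b-\pi(b)\in K_2$ for every $a\in H_0$ and $b\in H_2$. Since $K_i$ is the closed linear span of the vectors $x-\pi(x)$ with $x\in H_i$, bilinearity and continuity of the inner product make this pointwise condition equivalent to $K_0\bigperp K_2$, which is exactly the left-hand side. I do not foresee any real obstacle here: the argument amounts to careful bookkeeping on top of Lemma~\ref{lem:orthogonality-equivalence}, with the only minor subtlety being the correct unfolding of the extended definition of $\bigperp_{H_1}$ when its second argument does not contain $H_1$.
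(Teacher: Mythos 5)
Your argument is correct and complete, but it takes a somewhat different route from the paper's. You push both sides of the equivalence through Lemma~\ref{lem:orthogonality-equivalence}: at the level of subspaces this turns $H_0\bigperp_{H_1}H_2$ into the plain orthogonality $(H_0\ominus H_1)\bigperp(H_2\ominus H_1)$, and at the level of single vectors it turns $a\bigperp_{H_1}b$ into $(a-\pi(a))\perp(b-\pi(b))$, where $\pi$ is the projection onto $H_1$; you then close the loop by sesquilinearity and continuity of the inner product, using that the vectors $x-\pi(x)$ with $x\in H_i$ span a dense subspace of $H_i\ominus H_1$ for $i\in\{0,2\}$. The paper argues more directly: the implication from the subspace relation to the pointwise one is taken as clear, and the converse is proved contrapositively by exhibiting, for a vector $a\in H_0$ witnessing the failure of $H_0\bigperp_{H_1}H_2$, a single partner vector (the relevant projection of $a$) for which the pointwise relation fails. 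Your version buys uniformity, makes the unfolding of the extended definition of $\bigperp_{H_1}$ explicit, and the density argument spares you from naming a witness --- the one mildly delicate point in the paper's one-line proof, since the natural witness lives in $H_1H_2$ and a vector of $H_2$ still has to be extracted from it (the component along $H_1$ contributes nothing, as $a-\pi(a)\perp H_1$). The paper's version buys brevity and an explicit witness pair. Both are sound elementary Hilbert-space arguments.
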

\begin{proof}
If the relation $H_0\bigperp_{H_1}H_2$ fails, then some $a\in H_0$ does not satisfy $a\bigperp_{H_1}\pi_{H_2}(a)$. The converse is clear.
\end{proof}

\noindent\hrulefill

\section{Intervals in the free group}\label{sInt}

Let $\mbF$ be the free group in the generators $\set{a,b}$, with identity $e$. We denote
$$\mbL\coloneqq\set{a,a\inv,b,b\inv}\subseteq\mbF.$$
It will be useful to define, for each $k\in\mbN$, $0\leq i<4$ and $n=4k+i$, an element $\ell_n\in \mbL$ by:
$$\ell_n=\begin{cases} a & \text{if } i=0, \\ a\inv & \text{if } i=1, \\ b & \text{if } i=2, \\ b\inv & \text{if } i=3.\end{cases}$$
On the other hand, let $[\mbF]^{<\omega}$ be the set of finite subsets of $\mbF$ together with the left action $\mbF\actson [\mbF]^{<\omega}$, $gS=\set{gs:s\in S}$.

\begin{defin}
We define inductively an increasing chain $\mbI_0=\set{I_n}_{n\in\mbN}\subseteq [\mbF]^{<\omega}$ of finite subsets of $\mbF$, as follows. We let $I_0=\set{e}$. Then, if $I_n$ has already been defined, we set
$$I_{n+1} = I_n\cup \ell_n I_n.$$
Let $\mbI\coloneqq \mbF \mbI_0\cup\set{\emptyset}$ be the set of all translates of sets of the chain, plus the empty set. We call the elements of $\mbI$ \emph{intervals} of $\mbF$.
\end{defin}

Thus, for instance, the first six sets of $\mbI_0$ are:
$${\small\left\lbrace e\right\rbrace
\subseteq \left\lbrace e,a\right\rbrace
\subseteq \left\lbrace a\inv,e,a \right\rbrace
\subseteq \left\lbrace\begin{matrix} a\inv,e,a\\ ba\inv,b,ba\end{matrix}\right\rbrace
\subseteq \left\lbrace \begin{matrix} b\inv a\inv, b\inv, b\inv a \\ a\inv, e, a\	\\ ba\inv, b, ba\	\end{matrix} \right\rbrace
\subseteq \left\lbrace \begin{matrix} b\inv a\inv, b\inv, b\inv a, ab\inv a\inv, ab\inv, ab\inv a\\ a\inv, e, a, a^2\	\\ ba\inv, b, ba, aba\inv, ab, aba\	\end{matrix} \right\rbrace.}$$
It is easy to see that $\mbI_0$ is strictly increasing and exhausts $\mbF$, i.e., $I_n\subsetneq I_{n+1}$ for every $n\in\mbN$ and $\bigcup_{n\in\mbN}I_n=\mbF$.

We let $\ov{\mbF}$ be the semigroup of words in the letters $\ov{a}$, $\ov{b}$, $\ov{a\inv}$, $\ov{b\inv}$, including the empty word. The concatenation of two words $\alpha,\beta\in \ov{\mbF}$ will be denoted by $\alpha*\beta$. Given an element $w\in\mbF$, we let $\ov{w}\in\ov{\mbF}$ be its unique associated \emph{reduced} word.

\begin{lem}\label{sInt:lem:w=u*v}
Suppose $\ov{w}=\ov{u}*\ov{v}$ and $w\in I_n$. Then $u\in I_n$ and $v\in I_n$.
\end{lem}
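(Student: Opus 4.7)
The plan is to proceed by induction on $n$, since the family $\mbI_0$ is built inductively by left multiplication. The base case $n=0$ is immediate: $I_0=\{e\}$, so the only decomposition of the empty reduced word is trivial.

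For the inductive step, assume the statement holds for $I_n$ and take $w\in I_{n+1}=I_n\cup\ell_n I_n$ with $\overline{w}=\overline{u}*\overline{v}$. If $w\in I_n$, the inductive hypothesis immediately gives $u,v\in I_n\subseteq I_{n+1}$. Otherwise write $w=\ell_n z$ with $z\in I_n$, and split according to whether left multiplication by $\ell_n$ on $z$ cancels the first letter of $\overline{z}$. If it does, then $\overline{z}=\overline{\ell_n^{-1}}*\overline{z'}$; applying the inductive hypothesis to $z\in I_n$ yields $z'\in I_n$, and since $w=\ell_n z=z'$, we fall back into the previous case. Otherwise $\overline{w}=\overline{\ell_n}*\overline{z}$ is already reduced.

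In this last situation, I would read off the decomposition $\overline{u}*\overline{v}$ of $\overline{w}$. If $u=e$ then $v=w\in I_{n+1}$, and we are done. If $u\neq e$, then $\overline{u}$, being a prefix of $\overline{w}=\overline{\ell_n}*\overline{z}$, must begin with $\ell_n$, so $\overline{u}=\overline{\ell_n}*\overline{u'}$ with $u=\ell_n u'$, and consequently $\overline{z}=\overline{u'}*\overline{v}$. Since $z\in I_n$, the inductive hypothesis gives $u',v\in I_n$; hence $u=\ell_n u'\in \ell_n I_n\subseteq I_{n+1}$ and $v\in I_n\subseteq I_{n+1}$, as desired.

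The only real subtlety I foresee is the cancellation subcase, because the definition $I_{n+1}=I_n\cup\ell_n I_n$ does not a priori record reduced words; a given $w\in\ell_n I_n$ may be expressed as $\ell_n z$ in a way that does not reflect its reduced form. The argument above resolves this by invoking the inductive hypothesis on $z$ itself to detect when a cancellation occurs and rewrite $w$ as an element of $I_n$, so that we never have to perform an analysis of $\overline{u}*\overline{v}$ against a non-reduced expression of $w$. Everything else is essentially bookkeeping on prefixes and suffixes of reduced words.
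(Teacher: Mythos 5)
Your proof is correct and follows essentially the same induction as the paper: reduce to $w\in I_{n+1}\setminus I_n$, rule out (or, in your variant, absorb into the $I_n$ case) the possibility that $\ov{\ell_n}*\ov{w_1}$ is not reduced by applying the inductive hypothesis to $w_1$, and then peel off the leading letter $\ell_n$ from $\ov{u}$ to conclude via the inductive hypothesis on $\ov{w_1}=\ov{u_1}*\ov{v}$. The only difference is cosmetic: the paper phrases the cancellation subcase as a contradiction with $w\notin I_n$, while you fold it back into the already-settled case.
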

\begin{proof}
We proceed by induction on $n$, the case $n=0$ being obvious. Suppose the statement holds for $n$; let $w\in I_{n+1}$ satisfy $\ov{w}=\ov{u}*\ov{v}$ and let us show that $u,v\in I_{n+1}$. By the induction hypothesis, we may assume that $w\in I_{n+1}\setminus I_n\subseteq \ell_n I_n$, so that $w=\ell_n w_1$ for some $w_1\in I_n$. Moreover, $\ov{\ell_n}*\ov{w_1}$ is a reduced word, for otherwise $\ov{w_1}=\ov{\ell_n\inv}*\ov{w_2}$ for some $w_2$ which, by the inductive hypothesis, must belong to $I_n$; hence $w=w_2\in I_n$, which contradicts our assumption on $w$. Thus we have $\ov{w}=\ov{\ell_n}*\ov{w_1}$.

Now, we may assume $u\neq e$, hence $\ov{w}=\ov{\ell_n}*\ov{u_1}*\ov{v}$ for some $u_1$ such that $u=\ell_n u_1$. It follows that $\ov{w_1}=\ov{u_1}*\ov{v}$ and, by the inductive hypothesis, $u_1,v\in I_n$. Hence $u,v\in I_{n+1}$, as desired.
\end{proof}

We single out the following property that appeared in the previous proof. Let us say that an element $w\in\mbF$ \emph{begins with} $\ell\in \mbL$ if we have $\ov{w}=\ov{\ell}*\ov{u}$ for some $u\in\mbF$.

\begin{lem}\label{sInt:lem:first-letter}
Let $n\in\mbN$ and $\ell\in \mbL$. If $w\in \ell I_n\setminus I_n$, then $w$ begins with $\ell$.
\end{lem}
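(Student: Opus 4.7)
The plan is to argue by dichotomy on whether any cancellation occurs when forming the reduced word of $w$. Since $w \in \ell I_n$, I would start by fixing $u \in I_n$ with $w = \ell u$, and then compare the reduced words $\ov{w}$ and $\ov{\ell}*\ov{u}$. There are only two possibilities: either $\ov{\ell}*\ov{u}$ is already reduced, in which case $\ov{w} = \ov{\ell}*\ov{u}$ and $w$ begins with $\ell$ as desired, or there is cancellation at the junction, which forces $\ov{u}$ to begin with $\ov{\ell\inv}$.

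In the second case, I would write $\ov{u} = \ov{\ell\inv}*\ov{v}$, noting that $\ov{v}$ is reduced and cannot start with $\ov{\ell}$ (otherwise $\ov{u}$ would itself not be reduced). Then in $\mbF$ we have $w = \ell u = \ell\ell\inv v = v$. Now I apply Lemma~\ref{sInt:lem:w=u*v} to the decomposition $\ov{u} = \ov{\ell\inv}*\ov{v}$ together with the hypothesis $u \in I_n$: the lemma yields $v \in I_n$, and hence $w = v \in I_n$, contradicting the assumption that $w \in \ell I_n \setminus I_n$. Thus the cancellation case is ruled out, and the conclusion follows.

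The argument is really just an observation, so I do not expect any genuine obstacle; the only thing to be careful about is correctly invoking Lemma~\ref{sInt:lem:w=u*v} on the right factorization (the one sitting inside $u$, not inside $w$) to conclude $v \in I_n$. Notice also that this argument mirrors a step already used inside the proof of Lemma~\ref{sInt:lem:w=u*v} itself (the observation that $\ov{\ell_n}*\ov{w_1}$ is automatically reduced), which is a good sanity check that the two lemmas are naturally paired.
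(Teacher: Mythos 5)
Your proof is correct and follows essentially the same route as the paper: split on whether cancellation occurs in $\ell u$, and in the cancellation case apply Lemma~\ref{sInt:lem:w=u*v} to the factorization $\ov{u}=\ov{\ell\inv}*\ov{w}$ of $u\in I_n$ to force $w\in I_n$, contradicting the hypothesis. The only difference is cosmetic (you name the tail $v$ before observing $v=w$), so there is nothing to add.
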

\begin{proof}
Let $w=\ell u\in \ell I_n$. If $\ov{w}\neq\ov{\ell}*\ov{u}$, then $\ov{u}=\ov{\ell\inv}*\ov{w}$ and, by the previous lemma, $w\in I_n$.
\end{proof}

\begin{prop}\label{sInt:prop:basic-intersections}
Let $n\in\mbN$.
\begin{itemize}[leftmargin=20pt]
\item If $n$ is even, then 
$I_n\cap \ell_nI_n = I_{n-3}$.
\item If $n$ is odd, then
$I_{n}\cap \ell_nI_n = I_{n-1}$.
\end{itemize}
(With the convention that $I_{-3}=I_{-1}=\emptyset$.)
\end{prop}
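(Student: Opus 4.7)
The proof separates into the odd and even cases and relies on Lemmas~\ref{sInt:lem:w=u*v} and~\ref{sInt:lem:first-letter}, together with two elementary observations on the sequence $(\ell_n)$, both immediate from the definition modulo~$4$: if $n$ is odd, then $\ell_n = \ell_{n-1}^{-1}$; and if $n$ is even with $n \geq 4$, then $\ell_n = \ell_{n-3}^{-1}$, $\ell_{n-1} = \ell_{n-2}^{-1}$, and the sets $\{\ell_n, \ell_n^{-1}\}$ and $\{\ell_{n-2}, \ell_{n-1}\}$ are the two inverse pairs in $\mbL$.

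The odd case is the easier one. The relation $\ell_n \ell_{n-1} = e$ gives $\ell_n I_n = \ell_n I_{n-1} \cup I_{n-1}$, whence the inclusion $I_{n-1} \subseteq I_n \cap \ell_n I_n$. Conversely, any $w \in (I_n \cap \ell_n I_n) \setminus I_{n-1}$ would lie both in $\ell_{n-1} I_{n-1} \setminus I_{n-1}$ and in $\ell_n I_{n-1} \setminus I_{n-1}$, so by Lemma~\ref{sInt:lem:first-letter} would begin simultaneously with $\ell_{n-1}$ and $\ell_n = \ell_{n-1}^{-1}$, which is absurd.

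For the even case, $n = 0$ and $n = 2$ are handled by direct inspection. For $n \geq 4$ even, the inclusion $I_{n-3} \subseteq I_n \cap \ell_n I_n$ is immediate from $I_{n-3} \subseteq I_n$ and $I_{n-3} = \ell_n \ell_{n-3} I_{n-3} \subseteq \ell_n I_{n-2} \subseteq \ell_n I_n$. For the reverse inclusion, I would first establish the auxiliary identity $I_{n-2} \cap \ell_n I_{n-2} = I_{n-3}$: using $\ell_{n-3} = \ell_n^{-1}$ to write $I_{n-2} = I_{n-3} \cup \ell_n^{-1} I_{n-3}$ and $\ell_n I_{n-2} = \ell_n I_{n-3} \cup I_{n-3}$, three of the four pieces in the expanded intersection lie in $I_{n-3}$ trivially, while the remaining piece $\ell_n^{-1} I_{n-3} \cap \ell_n I_{n-3}$ is contained in $I_{n-3}$ by Lemma~\ref{sInt:lem:first-letter}, since any element outside $I_{n-3}$ would have to begin with both $\ell_n^{-1}$ and $\ell_n$.

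With the auxiliary identity in hand, the reverse inclusion is deduced in two steps, using the decomposition $I_n = I_{n-2} \cup \ell_{n-2} I_{n-2} \cup \ell_{n-1} I_{n-2}$ (obtained from $I_n = I_{n-1} \cup \ell_{n-1} I_{n-1}$ together with $\ell_{n-1} \ell_{n-2} = e$). Given $w \in I_n \cap \ell_n I_n$, one first shows $w \in I_{n-2}$: otherwise $w$ begins with $\ell_{n-2}$ or $\ell_{n-1}$, and writing $w = \ell_n u$ with $u \in I_n$, one splits on whether $u$ begins with $\ell_n^{-1}$. If not, $w$ begins with $\ell_n$ in reduced form, a contradiction; if so, Lemma~\ref{sInt:lem:w=u*v} and the decomposition of $I_n$ force $u \in I_{n-2}$, hence $w \in \ell_n I_{n-2} \setminus I_{n-2}$, and a further application of Lemma~\ref{sInt:lem:first-letter} again makes $w$ begin with $\ell_n$, contradicting the earlier conclusion. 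Second, with $w \in I_{n-2}$, one examines $\ell_n^{-1} w \in I_n$: if it lies in $I_{n-2}$, the auxiliary identity yields $w \in I_{n-3}$; if not, $\ell_n^{-1} w$ begins with $\ell_{n-2}$ or $\ell_{n-1}$, forcing $w = \ell_n \cdot (\ell_n^{-1} w)$ to begin with $\ell_n$ in reduced form, and combined with $I_{n-2} = I_{n-3} \cup \ell_{n-3} I_{n-3}$ (whose second summand, modulo $I_{n-3}$, consists of elements beginning with $\ell_{n-3} = \ell_n^{-1}$) this again gives $w \in I_{n-3}$. The main obstacle is precisely the sub-case in the first step where $u = \ell_n^{-1} w$ begins with $\ell_n^{-1}$: here one has to combine Lemma~\ref{sInt:lem:w=u*v} with the decomposition of $I_n$ to pin down $u \in I_{n-2}$ before Lemma~\ref{sInt:lem:first-letter} can be brought to bear to close the argument.
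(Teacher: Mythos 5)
Your proof is correct and takes essentially the same route as the paper's: both arguments rest on Lemmas~\ref{sInt:lem:w=u*v} and~\ref{sInt:lem:first-letter}, the decomposition $I_n=I_{n-2}\cup\ell_{n-2}I_{n-2}\cup\ell_{n-1}I_{n-2}$, and the reduction of $I_n\cap\ell_nI_n$ to the identity $I_{n-2}\cap\ell_nI_{n-2}=I_{n-3}$ (resp.\ the first-letter clash in the odd case). The only differences are presentational: the paper runs the case analysis set-wise with named pieces $A$, $A_{\pm1}$, $B$, $B_{\pm1}$, $C$ and fixes $\ell_n=a$ by symmetry, whereas you argue element-wise and treat $n=0,2$ by inspection.
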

\begin{proof}
We start with the even case $n=2m$. For concreteness we assume $m$ is also even, so that $\ell_n=a$, but everything goes through identically when $m$ is odd, by exchanging the roles of $a$ and $b$ below. 

Consider:
\vspace{-8pt}
\begin{align*}
& A=I_{n-4},\ \ A_1=aA\setminus A,\ \ A_{-1}=a\inv A\setminus A, \\
& B=A_{-1}\cup A\cup A_1,\ \ B_1=bB\setminus B,\ \ B_{-1}=b\inv B\setminus B, \\
& C=A\cup A_1=a(A_{-1}\cup A).
\end{align*}
We note that $C=I_{n-3}$, $B=I_{n-2}$, and $B_{-1}\cup B\cup B_1=I_n$. We want to show that the intersection
$$I_n\cap \ell_n I_n = (B_{-1}\cup B \cup B_1)\cap (aB_{-1} \cup aB \cup aB_1)$$
is precisely $C$. By Lemma~\ref{sInt:lem:first-letter}, the elements of $B_{-1}$ begin with $b\inv$ and the elements of $B_1$ begin with $b$, which implies that $B_{-1}$ and $B_1$ are disjoint from $aB_{-1}$ and $aB_1$. On the other hand, we have $aB=C\cup aA_1$. Since $B_{-1}$ and $B_1$ are disjoint from $C\subseteq B$ and since the elements of $A_1$ begin with $a$, we deduce that $B_{-1}$ and $B_1$ are also disjoint from $aB$. Similarly, we have $B=A_{-1}\cup C$ and since $aB_{-1}$ and $aB_1$ are disjoint from $C\subseteq aB$, and since the elements of $A_{-1}$ begin with $a\inv$, we have that $aB_{-1}$ and $aB_1$ are disjoint from $B$. Thus the former intersection becomes:
$$I_n\cap \ell_n I_n = B\cap aB = (A_{-1}\cup C) \cap (C \cup aA_1).$$
As before, the elements of $A_{-1}$ begin with $a\inv$ and the elements of $A_1$ begin with $a$. Hence $A_{-1}$ and $aA_1$ are disjoint from each other and from $C$, and it follows that $I_n\cap \ell_n I_n = C$, as desired.

The odd case of the statement is simpler. For convenience, in order to reuse the notation of the previous case, we prove that $I_{n-3}\cap \ell_{n-3}I_{n-3}=I_{n-4}$ where $n=2m$ and $m$ is even (again, the proof when $m$ is odd is identical exchanging $a$ and $b$). In other words, we want to show that
$$C\cap a\inv C = A.$$
Now, the left-hand side is equal to $(A\cup A_1)\cap (A_{-1}\cup A)$, and the equality follows from the fact that $A_{-1}$ and $A_1$ are disjoint from each other and from $A$.
\end{proof}

If both $I$ and $J$ are in $\mbI$ and we have $J\subseteq I$, we say $J$ is a \emph{subinterval} of $I$ and write $J\leq I$.

\begin{lem}\label{sInt:lem:subintervals-of-I_n+1}
Every proper subinterval of $I_{n+1}$ is a subinterval of $I_n$ or of $\ell_n I_n$.
\end{lem}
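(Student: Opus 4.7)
The plan is a proof by contradiction combining geodesic convexity of intervals with Lemmas~\ref{sInt:lem:w=u*v} and~\ref{sInt:lem:first-letter} to identify a unique \emph{bridge edge} across the split $I_{n+1}=I_n\cup\ell_n I_n$, followed by a case analysis on the root of the subinterval. Let $J=gI_m$ be a nonempty proper subinterval of $I_{n+1}$; cardinality forces $m\le n$. Assume for contradiction that $J\not\subseteq I_n$ and $J\not\subseteq\ell_n I_n$.

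First I would verify, by induction on $k$, that every interval is a geodesically convex subtree of the Cayley graph of $\mbF$. The base $I_0=\{e\}$ is trivial, and for the inductive step $I_{k+1}=I_k\cup\ell_k I_k$ is a union of two convex subtrees that either share a vertex (when the intersection from Proposition~\ref{sInt:prop:basic-intersections} is non-empty) or are joined by the Cayley edge $e\to \ell_k$ (in the cases $k\in\{0,2\}$, where that intersection is empty); either way the union is a convex subtree. Convexity is preserved by left translation, so $J$ is a geodesically convex subtree of the Cayley graph.

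Let $\mbF_{\ell_n}\subseteq\mbF$ denote the elements whose reduced form begins with $\ell_n$. Combining Lemmas~\ref{sInt:lem:w=u*v} and~\ref{sInt:lem:first-letter} one obtains $I_{n+1}\cap\mbF_{\ell_n}\subseteq \ell_n I_n$ and $I_{n+1}\setminus\mbF_{\ell_n}\subseteq I_n$: a word in $I_n$ beginning with $\ell_n$ lies in $\ell_n I_n$ by Lemma~\ref{sInt:lem:w=u*v} (applied to $\ov{w}=\ov{\ell_n}*\ov{\ell_n\inv w}$), while a word in $\ell_n I_n\setminus I_n$ begins with $\ell_n$ by Lemma~\ref{sInt:lem:first-letter}. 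From the contradictory hypothesis, $J$ thus contains some $x\in I_n\setminus\ell_n I_n$ (not beginning with $\ell_n$) and some $y\in\ell_n I_n\setminus I_n$ (beginning with $\ell_n$). By geodesic convexity the reduced path from $x$ to $y$ lies in $J$, and since right-multiplication never alters the first letter of a non-trivial reduced word, the only step on this path transitioning between $\mbF\setminus\mbF_{\ell_n}$ and $\mbF_{\ell_n}$ is the edge $e\to\ell_n$. Hence $e,\ell_n\in J$, equivalently $g\inv, g\inv\ell_n\in I_m$.

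Finally I would close the argument by a case analysis on $g$. If $g=e$ then $J=I_m\subseteq I_n$ (since $m\le n$ gives $I_m\subseteq I_n$), contradicting $J\not\subseteq I_n$; if $g=\ell_n$ then $J=\ell_n I_m\subseteq\ell_n I_n$, contradicting $J\not\subseteq\ell_n I_n$. The remaining case $g\notin\{e,\ell_n\}$ is the main obstacle: one must rule out that the $\ell_n$-edge joining $g\inv$ and $g\inv\ell_n$ can sit inside $I_m$ while $gI_m\subseteq I_{n+1}$ \emph{and} both $J\cap(I_n\setminus\ell_n I_n)$ and $J\cap(\ell_n I_n\setminus I_n)$ are non-empty. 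I expect this to require an auxiliary induction on $m$ via the decomposition $I_m=I_{m-1}\cup\ell_{m-1}I_{m-1}$, applying the lemma recursively to the two halves of $J$ and matching against the decomposition $I_{n+1}=I_n\cup\ell_n I_n$; alternatively, a direct analysis of the first letters of the reduced words $\ov{gy}$ as $y$ ranges over $I_m$, via Lemma~\ref{sInt:lem:w=u*v}, should also suffice. Making the last step rigorous is the only real difficulty.
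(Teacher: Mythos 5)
Your proposal has a genuine gap, which you acknowledge yourself: the case $g\notin\{e,\ell_n\}$ is left unresolved, and this is precisely where all the content of the lemma lives. The preliminary steps are correct (the intervals are indeed connected, hence geodesically convex, subtrees of the Cayley tree, and the deduction $e,\ell_n\in J$ is sound), but they do not reduce the work: once you know $g\inv,g\inv\ell_n\in I_m$ you are still facing essentially the whole problem, and the suggested auxiliary induction on $m$ is not obviously easier than starting over.

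The paper's own proof sidesteps the geometry entirely and is shorter. Write $J=wI_m$ with $m\leq n$. If $J\not\subseteq I_n$ then $w\neq e$ and there is $v\in I_m$ with $wv\in\ell_nI_n\setminus I_n$. One cannot have $\ov{v}=\ov{w\inv}*\ov{v'}$ (else $wv=v'\in I_m\subseteq I_n$ by Lemma~\ref{sInt:lem:w=u*v}), so the cancellation in $wv$ does not consume all of $w$; hence the first letter of $\ov{wv}$ is the first letter of $\ov{w}$, and Lemma~\ref{sInt:lem:first-letter} gives $\ov{w}=\ov{\ell_n}*\ov{u}$. If moreover $J\not\subseteq\ell_nI_n$, take $t\in I_m$ with $wt\in I_n\setminus\ell_nI_n$, so $ut\in\ell_n\inv I_n\setminus I_n$; the same two-lemma argument (with $u$ in place of $w$) shows $u$ begins with $\ell_n\inv$, contradicting that $\ov{\ell_n}*\ov{u}$ is reduced. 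This \emph{is} the ``direct analysis of the first letters of $\ov{gy}$'' you propose as an alternative at the end; pursuing it directly closes the gap and makes the convexity machinery unnecessary. As written, though, the key step is missing and the proof is incomplete.
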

\begin{proof}
Let $wI_m$ be a proper subinterval of $I_{n+1}$. Then, by cardinality, $m\leq n$. If $wI_m$ is not a subinterval of $I_n$, then $w\neq e$ and there is $v\in I_m$ such that $wv\in \ell_n I_n\setminus I_n$. We observe that we cannot have $\ov{v}=\ov{w\inv}*\ov{v'}$ for some $v'$, for otherwise we would have $wv=v'\in I_m\subseteq I_n$ by Lemma~\ref{sInt:lem:w=u*v}, contradicting our hypothesis on $wv$. Hence, using Lemma~\ref{sInt:lem:first-letter}, we deduce that $\ov{w}=\ov{\ell_n}*\ov{u}$ for some~$u$.

Now this implies that $wI_m\leq \ell_nI_n$. Indeed, if this is not the case then there is $t\in I_m$ such that $\ell_n ut\in I_n\setminus \ell_n I_n$ or, equivalently, $ut\in \ell_n\inv I_n\setminus I_n$. Using lemmas~\ref{sInt:lem:w=u*v} and \ref{sInt:lem:first-letter} as before, we deduce that $u$ begins with $\ell_n\inv$, contradicting the fact that $\ov{w}=\ov{\ell_n}*\ov{u}$.
\end{proof}

\begin{prop}
The set of intervals is closed under intersections.
\end{prop}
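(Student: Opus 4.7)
The plan is to proceed by induction on $k'$, where we write $J = w' I_{k'}$ and $I$ is an arbitrary interval (the empty case being trivial). The base case $k' = 0$ is immediate: $J$ is a singleton, so $I \cap J$ is either empty or equal to $J$. For the inductive step, I would use the defining recursion to decompose $J = J_1 \cup J_2$, where $J_1 = w' I_{k'-1}$ and $J_2 = w' \ell_{k'-1} I_{k'-1}$ are both intervals of smaller rank. Applying the inductive hypothesis to the pairs $(I, J_1)$ and $(I, J_2)$ gives that $A := I \cap J_1$ and $B := I \cap J_2$ are both intervals, and since $I \cap J = A \cup B$, the question reduces to showing this union is again an interval.

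The heart of the argument is the following dichotomy: either one of the intervals $A, B$ is contained in the other, or $J$ is entirely contained in $I$. Given this, the proof concludes immediately: in the first case $A \cup B$ is the larger of $A$ and $B$, and in the second case $A = J_1$, $B = J_2$, so $A \cup B = J_1 \cup J_2 = J$ is an interval.

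Proving this dichotomy is the main obstacle. I would argue contrapositively: suppose $A \not\subseteq B$ and $B \not\subseteq A$, take witnesses $x \in I \cap (J_1 \setminus J_2)$ and $y \in I \cap (J_2 \setminus J_1)$, and deduce $J \subseteq I$. By Lemma~\ref{sInt:lem:first-letter} applied to $J_2$, the reduced form of $w'^{-1}y$ begins with $\ell_{k'-1}$; on the other hand, $w'^{-1}x \in I_{k'-1}$ cannot begin with $\ell_{k'-1}$, for otherwise Lemma~\ref{sInt:lem:w=u*v} would place $w'^{-1}x$ in $\ell_{k'-1} I_{k'-1}$, contradicting $x \notin J_2$. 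The hard step is translating this information---that $I$ simultaneously meets both halves of $J$ in their respective ``exterior'' parts---into the containment $J \subseteq I$. I would exploit the self-similar structure of the $I_n$'s through Lemmas~\ref{sInt:lem:w=u*v} and~\ref{sInt:lem:subintervals-of-I_n+1}, probably combined with a secondary induction on the rank of $I$: since proper subintervals of $wI_m$ are canonically split into two halves, one should be able to ``zoom in'' on $I$ recursively and use the witnesses $x, y$ to force $I$ to contain each element of $J$. This reduction to a combinatorial containment statement, rather than any topological or order-theoretic closure property, is where I expect the main technical work to lie.
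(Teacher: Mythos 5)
Your reduction is fine as far as it goes (decomposing $J=w'I_{k'}$ into $J_1=w'I_{k'-1}$ and $J_2=w'\ell_{k'-1}I_{k'-1}$, applying the inductive hypothesis to $I\cap J_1$ and $I\cap J_2$), but the whole weight of the argument then rests on your ``dichotomy'': if $I$ meets both $J_1\setminus J_2$ and $J_2\setminus J_1$, then $J\subseteq I$. This is precisely the step you leave unproven, and it is a genuinely strong rigidity statement about the family $\mbI$ --- it says that an interval touching the two extreme ``layers'' of $J$ in the $\ell_{k'-1}$-direction must already swallow all of $J$, including its far corners in the other generator's directions. The lemmas you invoke do not give it: Lemma~\ref{sInt:lem:subintervals-of-I_n+1} only controls intervals that are \emph{contained in} $J$, whereas here $I$ is arbitrary, and Lemmas~\ref{sInt:lem:w=u*v} and \ref{sInt:lem:first-letter} only tell you about first letters of the witnesses. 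Note also the circularity lurking here: the dichotomy \emph{is} true, but the natural proof is ``$I\cap J$ is an interval contained in $J$, hence by Lemma~\ref{sInt:lem:subintervals-of-I_n+1} it lies in $J_1$ or $J_2$ unless it equals $J$'' --- which presupposes exactly the proposition you are trying to prove (and at the rank $k'$ your induction has not yet reached). So an independent proof of the dichotomy is required, and your sketch of ``zooming in on $I$ recursively'' is not developed and points in an unclear direction: splitting $I$ into halves does not obviously help show that $I$ is large enough to contain $J$, especially since $I$ and $J$ have unrelated ranks and positions.

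For comparison, the paper sidesteps any such statement by running the induction differently: it inducts on $n$ and proves that the \emph{subintervals of a fixed $I_n$} are closed under intersection. Given two proper subintervals of $I_{n+1}$, Lemma~\ref{sInt:lem:subintervals-of-I_n+1} places each of them inside $I_n$ or $\ell_n I_n$; the only nontrivial case is the mixed one, where Proposition~\ref{sInt:prop:basic-intersections} lets one replace $I\cap I'$ by $I\cap I'\cap I_k$ with $I_k=I_n\cap\ell_nI_n$, and then the inductive hypothesis is applied twice more inside $I_n$, $\ell_n I_n$ and $I_k$. Everything used there is already established. If you want to salvage your route, the honest task is to prove the dichotomy directly (say by a careful analysis of which translates $wI_m$ can contain points of both exterior layers), and that analysis is of at least the same order of difficulty as the paper's proof itself; as it stands, your proposal is a plan with its central claim missing.
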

\begin{proof}
Since intervals are finite and the increasing chain $\mbI_0$ exhausts $\mbF$, it is enough to check that for every $n\in\mbN$ the set of subintervals of $I_n$ is closed under intersections. We prove this by induction on $n$.

The base case is trivial. Suppose the claim holds up to $n$, and let $I$ and $I'$ be proper subintervals of $I_{n+1}$. Then, by the previous lemma, $I\leq I_n$ or $I\leq \ell_n I_n$, and similarly for $I'$. If both $I$ and $I'$ subintervals of $I_n$, then we are done by the inductive hypothesis. Similarly, if they are both subintervals of $\ell_n I_n$, then $\ell_n\inv I$ and $\ell_n\inv I'$ are subintervals of $I_n$; hence $\ell_n\inv I \cap \ell_n\inv I'$ is an interval by the inductive hypothesis, and so is $I\cap I'$ being a translate of it.

Thus we may assume that $I\leq I_n$ and $I'\leq \ell_n I_n$. By Proposition~\ref{sInt:prop:basic-intersections}, we then have
$$I\cap I' = I\cap I' \cap I_n\cap\ell_n I_n = I\cap I' \cap I_k,$$
where $k=n-3$ if $n$ is even and $k=n-1$ if $n$ is odd. Now, by the inductive hypothesis, the set $J=I\cap I_k$ is a subinterval of $I_n$. Similarly, $\ell_n\inv I'\cap \ell_n\inv I_k\leq I_n$, and so $J'=I'\cap I_k$ is a subinterval of $\ell_n I_n$. In particular, both $J$ and $J'$ are subintervals of $I_k$ and thus, by the inductive hypothesis again, their intersection $J\cap J'=I\cap I'$ is a subinterval as well.
\end{proof}

\noindent\hrulefill

\section{Cairns of models}\label{sCai}

In this section we will pile up several copies of a given $\aleph_0$-categorical structure to form a larger copy endowed with a convenient $\mbF$-action.

Throughout the section we fix an $\aleph_0$-categorical metric structure $M$ and a sufficiently saturated and homogeneous elementary extension $\wM\succeq M$. Then, by \emph{a copy of $M$} we mean any other elementary embedding of $M$ into $\wM$ or, which is the same, any tuple $M'=(a_m)_{m\in M}\subseteq\wM$ having the same type over $\emptyset$ as $(m)_{m\in M}$. (If the reader prefers, we might as well consider only countable tuples, by replacing the index set by some countable dense subset of~$M$.) If $M_0$ is a copy of~$M$, we will by abuse of notation denote its image, which is an elementary substructure of $\wM$, again by $M_0$. Observe that if $M_0$ and $M_1$ are copies of $M$, then $M_0\subseteq M_1$ if and only if $M_0\preceq M_1$.

We recall that $\mbI$ denotes the set of intervals of the two-generated free group $\mbF$ as defined in the previous section.

\begin{prop}\label{sCai:prop:cairns}
There exists a family $\set{M_I}_{I\in\mbI}$ of tuples in $\widehat{M}$ satisfying the following properties:
\begin{enumerate}
\item If $I\neq\emptyset$, $M_I$ is a copy of $M$.
\item $M_\emptyset=\emptyset$.
\item If $J\leq I$, $J\neq\emptyset$, then $M_J\preceq M_I$.
\item $(M_I)_{I\in\mbI}\equiv (M_{aI})_{I\in\mbI}\equiv (M_{bI})_{I\in\mbI}$.
\item If $I,J\in\mbI$ and $K=I\cap J$, then $M_I\ind_{M_K} M_J$.
\end{enumerate}
\end{prop}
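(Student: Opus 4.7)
The plan is to build $(M_I)_{I\in\mbI}$ by induction on $n$, defining $M_J$ for all subintervals $J\leq I_n$ at stage $n$. Since every interval of $\mbF$ is eventually a subinterval of some $I_n$ (as the chain $\mbI_0$ exhausts $\mbF$), this covers all of $\mbI$, with $M_\emptyset:=\emptyset$. For the base case, take $M_{I_0}$ to be any copy of $M$ in $\wM$. At stage $n+1$, recall that by Lemma~\ref{sInt:lem:subintervals-of-I_n+1} every proper subinterval of $I_{n+1}$ is a subinterval of $I_n$ or of $\ell_nI_n$, while Proposition~\ref{sInt:prop:basic-intersections} identifies the overlap as $I_n\cap\ell_nI_n = I_k$ with $k\in\{n-3,n-1\}$, so the subintervals lying in the intersection are precisely the subintervals of $I_k$.

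The heart of the induction is to realize a family $(M_{\ell_nJ})_{J\leq I_n}$ inside $\wM$ with three properties: (i) it has the same type over $\emptyset$ as $(M_J)_{J\leq I_n}$; (ii) on the ``overlap'' indices $J\leq\ell_n^{-1}I_k$, the value $M_{\ell_nJ}$ coincides with the previously defined $M_{\ell_nJ}$ (well-defined since then $\ell_nJ\leq I_k\leq I_n$); and (iii) $M_{\ell_nI_n}\ind_{M_{I_k}} M_{I_n}$. To achieve this I would apply the Existence property of $\ind$ (Proposition~\ref{prop:prop-of-ind}(\ref{item:existence})) over the base $M_{I_k}$ to produce a copy of the cairn dominated by $I_n$ that is independent from $M_{I_n}$ over $M_{I_k}$, with Invariance (Proposition~\ref{prop:prop-of-ind}(\ref{item:invariance})) transporting the full type. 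Then $M_{I_{n+1}}$ is chosen as a common elementary extension of $M_{I_n}$ and $M_{\ell_nI_n}$, which exists by saturation of $\wM$.

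Property~(5) at stage $n+1$ is then checked by cases via Lemma~\ref{sInt:lem:subintervals-of-I_n+1}: for $I,J\leq I_{n+1}$ with $K=I\cap J$, the cases of $I,J$ both $\leq I_n$ or both $\leq \ell_nI_n$ follow from the induction hypothesis (or its shifted counterpart, using that the overlap sits inside $I_k$), while in the mixed case $I\leq I_n$, $J\leq\ell_nI_n$, Proposition~\ref{sInt:prop:basic-intersections} forces $K\leq I_k$ and the claim reduces, via Monotonicity and Weak Symmetry of $\ind$, to the built-in independence $M_{\ell_nI_n}\ind_{M_{I_k}}M_{I_n}$. Properties~(1)--(3) hold by construction (with the elementarity of property~(3) for new pairs inherited from the type-preserving realization and from the common extension).

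The main obstacle is property~(4), which is simultaneously what makes requirement~(ii) above consistent. I would therefore strengthen the inductive hypothesis with a \emph{shift-invariance clause}: at stage~$n$, whenever $F$ and $wF$ are collections of subintervals of $I_n$ related by some $w\in\mbF$, the tuples $(M_J)_{J\in F}$ and $(M_{wJ})_{J\in F}$ have the same type. Requirement~(ii) is then an instance of this clause applied to two subfamilies of $(M_J)_{J\leq I_n}$, which is precisely what allows the realization. Preservation of the clause at step $n+1$ is immediate for pairs $F,wF$ contained in $I_n$ or in $\ell_nI_n$ (by hypothesis and by the fact that $(M_{\ell_nJ})_J$ is a type-preserving shift of $(M_J)_J$), while cross cases again invoke the fresh independence. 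Property~(4) then follows by taking $n$ large enough that any prescribed finite $F$ and its $a$- or $b$-shift both lie in the subintervals of $I_n$. The essential role of the combinatorial results of Section~\ref{sInt} is to confine the overlap at each step to the previously-constructed interval $I_k$, which is exactly what permits both the $\ind$-existence step and the bookkeeping of shift-invariance to be carried out.
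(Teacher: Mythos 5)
Your plan follows the paper's own route quite closely: induct on $n$, use Lemma~\ref{sInt:lem:subintervals-of-I_n+1} and Proposition~\ref{sInt:prop:basic-intersections} to confine the overlap of $I_n$ and $\ell_nI_n$ to $I_k$, realize a shifted copy of the level-$n$ cairn over the base $M_{I_k}$ independent from $M_{I_n}$ via Existence, take $M_{I_{n+1}}$ as a common elementary extension, and verify (5) by cases. However, two steps do not work as you state them. The main one is your strengthened inductive clause (shift-invariance of $(M_J)_{J\in F}$ for \emph{arbitrary} $w\in\mbF$ and arbitrary subfamilies of subintervals) together with the claim that its ``cross cases'' at stage $n+1$ follow by ``invoking the fresh independence''. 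The stable independence relation of a possibly unstable theory is not stationary: $M_{\ell_nI_n}\ind_{M_{I_k}}M_{I_n}$ only constrains $\phi$-definitions for \emph{stable} $\phi$, so independence plus equality of the types of the two sides does not pin down the joint type of a mixed tuple, and the type equalities demanded by the cross cases are not consequences of the data available at stage $n+1$. The good news is that the general clause is also unnecessary. The paper maintains only the instances $(M_I)_{I\leq I_m}\equiv(M_{\ell_mI})_{I\leq I_m}$ for $m<n$: the instance $m=k$ (legitimate because $\ell_k=\ell_n\inv$, by the choice of the sequence $\ell_n$) is exactly what allows the overlap-matching realization at stage $n+1$ --- one transports the old cairn by an automorphism $\tau$ sending $(M_{\ell_n\inv I})_{I\leq I_k}$ to $(M_I)_{I\leq I_k}$ and realizes $\tp(\tau D/M_{I_k})$ with the independence requirement; agreement on the overlap is then \emph{automatic} because those tuples lie inside the base $M_{I_k}$ (a point you should make explicit) --- and the new instance $m=n$ holds by construction, since the new side has the same type over $\emptyset$ as $\tau D\equiv D$. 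These instances alone yield (4) in the limit, as $\ell_m=a$ and $\ell_m=b$ for infinitely many $m$ and types of infinite tuples are determined by finite subtuples.

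The second issue is the mixed case of property (5): ``Monotonicity and Weak Symmetry'' do not suffice, because the built-in independence lives over the base $M_{I_k}$ while the conclusion $M_I\ind_{M_K}M_J$ is over $M_K$ with $K=I\cap J\leq I_k$ possibly much smaller. Lowering the base requires Transitivity together with two further appeals to the induction hypothesis: with $Q=J\cap I_k$ one gets $M_J\ind_{M_Q}M_{I_k}$ (induction plus Invariance on the $\ell_nI_n$ side), hence $M_J\ind_{M_Q}M_I$ by Transitivity and Monotonicity, then $M_I\ind_{M_Q}M_J$ by Weak Symmetry, and finally $M_I\ind_{M_K}M_Q$ (induction inside $I_n$, using $Q\cap I=K$) combines with Transitivity to give $M_I\ind_{M_K}M_J$. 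A last, minor point: $M_{I_{n+1}}$ must be a \emph{copy of $M$} containing $M_{I_n}$ and $M_{\ell_nI_n}$; this uses $\aleph_0$-categoricity (any separable elementary substructure of $\wM$ containing both is isomorphic to $M$), not merely saturation of $\wM$.
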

\begin{proof}
We set $M_\emptyset\coloneqq \emptyset$ and let $M_{I_0}$ be any copy of $M$. Suppose inductively that for some $n\in\mbN$ we have produced tuples $M_I$ for each $I\leq I_n$ satisfying the following conditions:
\begin{enumerate}[label=(\roman*)]
\item\label{item1:prop:cairn} If $J\leq I\leq I_n$, $J\neq\emptyset$, then $M_J$ is a copy of $M$ and $M_J\preceq M_I$.
\item\label{item2:prop:cairn} For every $m<n$, $(M_I)_{I\leq I_m}\equiv (M_{\ell_m I})_{I\leq I_m}$.
\item\label{item3:prop:cairn} If $I,J\leq I_n$ and $K=I\cap J$, then $M_I\ind_{M_K} M_J$.
\end{enumerate}
We want to construct $M_J$ for every $J\leq I_{n+1}$ that is not already a subinterval of $I_n$, in such a way that the preceding conditions also hold for $n+1$. Let $\ell=\ell_n$. Since, by Lemma~\ref{sInt:lem:subintervals-of-I_n+1}, every $J\leq I_{n+1}$ is either a subinterval of $I_n$, a subinterval of $\ell I_n$, or $I_{n+1}$ itself, we have to construct $M_J$ for $J=I_{n+1}$ and for those $J$ of the form $J=\ell I$, $I\leq I_n$, not already contained in $I_n$.

We let $k=n-3$ if $n$ is even and $k=n-1$ if $n$ is odd; in either case we have $\ell_k=\ell\inv$ and $I_n\cap \ell I_n = I_k$ by Proposition~\ref{sInt:prop:basic-intersections}. Let $C=(M_I)_{I\leq I_k}$ and $C'=(M_{\ell\inv I})_{I\leq I_k}$. We know inductively that $C\equiv C'$, so let $\tau$ be an automorphism of $\wM$ sending $C'$ to $C$. Let moreover $D=(M_I)_{I\leq I_n}$. Then, using the existence property of independence, choose $D'=(N_I)_{I\leq I_n}$ in $\wM$ such that
\begin{equation}\label{eq:prop:cairn}\tag{*}
D'\equiv_{M_{I_k}} \tau D\text{ \ and \ }N_{I_n}\ind_{M_{I_k}} M_{I_n}.
\end{equation}
Now, for every $I\leq I_n$ such that $\ell I$ is not contained in $I_n$, we set $M_{\ell I}\coloneqq N_I$. On the other hand, if $I\leq I_n$ is such that $\ell I\leq I_n$, then we have
$$\ell I\leq I_n\cap \ell I_n = I_k.$$
In particular, $M_{\ell I}\subseteq M_{I_k}$ and $\tau$ sends $M_I=M_{\ell\inv \ell I}$ to $M_{\ell I}$. But then, by the first condition in~(\ref{eq:prop:cairn}), $M_{\ell I} = \tau M_I \equiv_{M_{I_k}} N_I$, and therefore $M_{\ell I} = N_I$. In other words, we have $M_{\ell I} = N_I$ for every $I\leq I_n$. Since $D'\equiv\tau D\equiv D$, we obtain that $(M_I)_{I\leq I_n}\equiv (M_{\ell I})_{I\leq I_n}$, which is the case of condition \ref{item2:prop:cairn} that we had to ensure in the inductive step.

Now we define $M_{I_{n+1}}$ to be any copy of $M$ that contains both $M_{I_n}$ and $M_{\ell I_n}$. This exists because $M$ is $\aleph_0$-categorical. Then condition~\ref{item1:prop:cairn} is satisfied by construction. We are left to check condition~\ref{item3:prop:cairn}.

Let $I,J\leq I_{n+1}$, $K=I\cap J$. If either of $I,J$ is equal to $I_{n+1}$, there is nothing to prove. If both $I,J$ are subintervals of $I_n$ or of $\ell I_n$, then the condition follows from the inductive hypothesis and, in the latter case, from the fact that $(M_I)_{I\leq I_n}\equiv (M_{\ell I})_{I\leq I_n}$ and the invariance of independence. Finally, if $I\leq I_n$ and $J\leq \ell I_n$ then $K\leq I_k$ and, by construction and monotonicity,
$$M_J\ind_{M_{I_k}} M_{I_n}.$$
On the other hand, if we let $Q=J\cap I_k\leq \ell I_n$, then by the inductive hypothesis and invariance we have $M_J\ind_{M_Q} M_{I_k}$ and so, by transitivity and monotonicity,
$$M_J\ind_{M_Q} M_I.$$
Since $M_Q\subseteq M_J$, by symmetry we have $M_I\ind_{M_Q} M_J$. Finally, since both $Q$ and $I$ are subintervals of $I_n$ and since $Q\cap I=K$, we also have inductively that $M_I\ind_{M_K}M_Q$. Hence
$M_I\ind_{M_K}M_J$ by transitivity, and also $M_J\ind_{M_K}M_I$ by symmetry, as desired.

Thus we have built a family of tuples $(M_I)_{I\in\mbI}$ satisfying the conditions \ref{item1:prop:cairn}, \ref{item2:prop:cairn}, \ref{item3:prop:cairn} above. On the other hand, since every finite set of intervals is contained in the set of subintervals of $I_m$ for some $m\in\mbN$ with $\ell_m=a$, and since from \ref{item2:prop:cairn} we have $(M_I)_{I\leq I_m}\equiv (M_{aI})_{I\leq I_m}$, we deduce that $(M_I)_{I\in\mbI}\equiv (M_{aI})_{I\in\mbI}$. Similarly, $(M_I)_{I\in\mbI}\equiv (M_{bI})_{I\in\mbI}$. We conclude that $(M_I)_{I\in\mbI}$ has all the properties of the statement.
\end{proof}

Suppose now that we have a family $\set{M_I}_{I\in\mbI}$ as given by the proposition, which we may see as a direct system of elementary embeddings. Then the closed union
$$M_\infty=\ov{\bigcup_{I\in\mbI}M_I},$$
being equal to the closed union of the elementary chain $\set{M_I}_{I\in\mbI_0}$, is a common elementary extension of every $M_I$. Moreover, by $\aleph_0$-categoricity, $M_\infty\simeq M$.

On the other hand, the conditions $(M_I)_{I\in\mbI}\equiv (M_{aI})_{I\in\mbI}$ and $(M_I)_{I\in\mbI}\equiv (M_{bI})_{I\in\mbI}$ imply that there are automorphisms $\tau_a$, $\tau_b$ of $\widehat{M}$ such that $\tau_a M_I = M_{aI}$ and $\tau_b M_I = M_{bI}$ for every $I\in\mbI$. Note that, then, $\tau_a\inv M_I = M_{a\inv I}$ and $\tau_b\inv M_I = M_{b\inv I}$. In particular, these automorphisms restrict to automorphisms of $M_\infty$.

Thus we can phrase the previous proposition in the following manner.

\begin{theorem}\label{sCai:thm:cairns}
Every $\aleph_0$-categorical structure $M$ admits an action $\mbF\actson^\tau M$ by automorphisms and a direct system of elementary substructures $C=\set{M_I}_{I\in\mbI}$ (with base $M_\emptyset=\emptyset$) such that:
\begin{enumerate}
\item $M$ is the direct limit of $C$.
\item For every $w\in\mbF$ and $I\in\mbI$, $\tau_w M_I = M_{wI}$.
\item If $I,J\in\mbI$ and $K=I\cap J$, then $M_I\ind_{M_K} M_J$.
\end{enumerate}
\end{theorem}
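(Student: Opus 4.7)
The plan is to extract the statement of the theorem directly from Proposition~\ref{sCai:prop:cairns}; the genuine work has already been done there, and what remains is a packaging step. I would start by taking a family $\{M_I\}_{I\in\mbI}$ in $\wM$ as produced by that proposition and form the closed union $M_\infty=\overline{\bigcup_{I\in\mbI}M_I}$. Because $\mbI_0$ is a cofinal increasing chain in $\mbI$ (every interval sits inside some $I_n$, by the definition of $\mbI$ together with the fact that $\mbI_0$ exhausts $\mbF$), $M_\infty$ is the completion of the union of an elementary chain $\{M_{I_n}\}$ of copies of $M$, hence is itself an elementary substructure of $\wM$ in which every $M_I$ embeds elementarily. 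Since $M_\infty$ is separable and elementarily equivalent to $M$, $\aleph_0$-categoricity lets us fix an isomorphism $M_\infty\simeq M$ and thereafter identify the two. This already yields clauses (1) and (3) of the theorem: (1) holds by construction, and (3) is inherited verbatim from the corresponding property of the proposition.

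Next I would build the action. The last clause of the proposition says $(M_I)_{I\in\mbI}\equiv(M_{aI})_{I\in\mbI}\equiv(M_{bI})_{I\in\mbI}$. By homogeneity of $\wM$, each of these elementary equivalences is realized by an automorphism of $\wM$: choose $\tau_a,\tau_b\in\Aut(\wM)$ with $\tau_a M_I=M_{aI}$ and $\tau_b M_I=M_{bI}$ for all $I\in\mbI$. Applying these equivalences to the inverse action as well gives $\tau_a\inv M_I=M_{a\inv I}$ and $\tau_b\inv M_I=M_{b\inv I}$. Hence each of $\tau_a^{\pm1},\tau_b^{\pm1}$ permutes the family $\{M_I\}_{I\in\mbI}$, and therefore preserves $\bigcup_I M_I$ setwise; by continuity (isometries of $\wM$ are continuous for the underlying metric) each preserves $M_\infty$, so restricts to an automorphism of $M_\infty\simeq M$.

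Since $\mbF$ is freely generated by $a,b$, the assignment $a\mapsto\tau_a,\ b\mapsto\tau_b$ extends uniquely to a group homomorphism $\tau\colon\mbF\to\Aut(M)$, giving the desired action $\mbF\actson^\tau M$. Clause (2) is proved by induction on the length of the reduced word $\ov{w}$: the base and generator cases are the defining properties of $\tau_a$ and $\tau_b$, and for the inductive step, if $w=\ell u$ with $\ell\in\mbL$ and $|{\ov u}|<|{\ov w}|$, then $\tau_w M_I=\tau_\ell(\tau_u M_I)=\tau_\ell M_{uI}=M_{\ell uI}=M_{wI}$.

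There is no real obstacle left at this stage: the delicate model-theoretic construction happens entirely inside the proof of Proposition~\ref{sCai:prop:cairns} (where the independence property (3) must be maintained along with the coherence conditions that make the translation-equivalences work). The one mild subtlety to check is the identification of $M_\infty$ with $M$ being compatible with the action, but this is automatic once we regard the isomorphism $M_\infty\simeq M$ as fixed, since the $\tau_w$ are defined on $M_\infty$ and then transported to $M$ through this identification.
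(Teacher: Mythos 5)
Your proposal is correct and follows essentially the same route as the paper: the theorem is indeed just a repackaging of Proposition~\ref{sCai:prop:cairns}, obtained by taking the closed union $M_\infty=\ov{\bigcup_I M_I}$ (which reduces to the elementary chain $\set{M_{I_n}}$ by cofinality of $\mbI_0$), identifying $M_\infty\simeq M$ by $\aleph_0$-categoricity, and using homogeneity of $\wM$ to realize the equivalences $(M_I)\equiv(M_{aI})\equiv(M_{bI})$ as automorphisms $\tau_a,\tau_b$ that preserve the family (and hence $M_\infty$) and generate the $\mbF$-action. Your explicit induction on word length for clause~(2) and the remark that $\tau_a\inv M_I=M_{a\inv I}$ match the paper's (more terse) treatment.
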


\begin{rem}\label{rem:anti-ref-free-action}
Suppose $S$ is a sort of $M$ with a stable metric. Then, by the anti-reflexivity of independence, the $\mbF$-action given in Theorem~\ref{sCai:thm:cairns} is free on $\bigcup_{I\in\mbI} S_I\setminus\acl(\emptyset)$, where $S_I$ denotes the corresponding sort of $M_I$. Indeed, if $a\in S_I$ is such that $\tau_w a=a$ for some $w\neq e$, then there is $n\in\mbN$ with $I\cap w^nI=\emptyset$. Hence $M_I\ind M_{w^nI}$ and $a\in S_I\cap S_{w^nI}=\acl(\emptyset)$.

In particular, if $M$ is a \emph{classical} structure, then the action $\mbF\actson^\tau M$ is free on $M\setminus\acl(\emptyset)$.
\end{rem}

A similar argument as in the previous remark yields the following.

\begin{cor}\label{cor:WAP-nonAP}
Let $G$ be a Roelcke precompact Polish group, say the automorphism group of an $\aleph_0$-categorical structure $M$. Suppose the Bohr compactification of $G$ is a proper factor of its WAP compactification. Then the action $\tau$ of Theorem~\ref{sCai:thm:cairns} induces a discrete copy of a rank two non-abelian free group inside $G$.
\end{cor}
\begin{proof}
Let $\widetilde G$ be the homomorphic image of $G$ inside its WAP compactification. Then the topology of $\widetilde G$ is induced by weakly almost periodic functions and thus, by the main theorem of Ben Yaacov--Berenstein--Ferri \cite{benReflexive}, $\widetilde G$ admits a \emph{stable} left-invariant metric $\rho$. It follows that the left-completion $S$ of $\widetilde G$ with respect to $\rho$ is an imaginary sort of $M$ with a stable metric. Note that $S$ is not compact, for otherwise $\widetilde G$ would be compact and equal to both the Bohr and the WAP compactifications of $G$. The elements of $S$ are thus not algebraic because their closed $G$-orbit is not compact.

By Remark~\ref{rem:anti-ref-free-action} (and using its notation), the action $\mbF\actson^\tau \bigcup_{I\in\mbI}S_I$ is free, so $\tau(\mbF)$ is a rank two non-abelian free subgroup of $G$. To see that $\tau(\mbF)$ is discrete in $G$, suppose to the contrary that we have a sequence $w_k\in\mbF\setminus\set{e}$ with $\tau_{w_k}\to 1_G$. Choose any $I\in\mbI$ and $a\in S_I$. By freeness of the action, the size of the $w_k$ (as words) tends to infinity, so there is $k_0$ such that for all $k\geq k_0$ we have $w_kI\cap I=\emptyset$. Thus also $M_{w_kI}\ind M_I$. This implies that for $k\geq k_0$ the $\rho$-definitions of the types $\tp(\tau_{w_k}a/M_I)$ are $\acl(\emptyset)$-definable. Since $\tau_{w_k}a\to a\in M_I$, it follows easily that $a\in\acl(\emptyset)$, contradicting the non-algebraicity of the elements of $S$.
\end{proof}

We end this section with a few side comments.

\begin{rem}
The construction of this section does not use much of the $\aleph_0$-categoricity hypothesis. Indeed, everything goes through, for instance, if $M$ is just assumed to be separable and approximately $\aleph_0$-saturated.

Similarly, not much is used in the construction about the stable independence relation. The latter could be replaced by any ternary relation $\ind^*$ satisfying the properties of invariance, monotonicity, transitivity, existence and weak symmetry (see Proposition~\ref{prop:prop-of-ind}).

On the other hand, $\aleph_0$-categoricity and stability will both be crucial in the next section.
\end{rem}

\begin{rem}
One could present the construction from the topological group viewpoint. Indeed, if $M$ is $\aleph_0$-categorical then the semigroup $\End(M)$ of elementary self-embedding of $M$ can be identified with the left-completion $\widehat{G}_L$ of its automorphism group; hence we can see our system $\set{M_I}_{I\in\mbI}$ as a subset of $\widehat{G}_L$ satisfying certain algebraic relations. The independence conditions $M_I\ind_{M_K} M_J$, for instance, translate to equations involving the $*$-semigroup laws of the WAP compactification $W(G)\supseteq\widehat{G}_L$, as per \cite[\textsection 5]{bentsa}. However, the model-theoretic approach is more natural to~us for the purposes of this paper.
\end{rem}

\begin{rem}
In connection with Corollary~\ref{cor:WAP-nonAP}, Todor Tsankov made me observe that, in the classical setting, one can use the method of Ehrenfeucht--Mostowski models (i.e., models built upon an indiscernible sequence) to see that every infinite oligomorphic group contains a closed copy of $\Aut(\mbQ,<)$ (which, in turn, contains a discrete non-abelian free group). The standard construction of Ehrenfeucht--Mostowski models in classical logic relies on Skolem functions, a technique that does not work in continuous logic. However, as pointed out subsequently to us by Ita{\"\i} Ben Yaacov, one can still produce Ehrenfeucht--Mostowski models for metric theories, and in particular show that every non-compact PRP group contains a closed copy of $\Aut(\mbQ,<)$. His construction has appeared in \cite[\textsection 3.7]{benCatcat}, in the framework of compact abstract theories. Since the proof can be presented in a way that is close in spirit to the constructions considered in this paper, we sketch it out below.
\end{rem}

\begin{theorem}\label{thm:AutQ-in-PRP}
The group $\Aut(\mbQ,<)$ embeds in every non-compact PRP group.
\end{theorem}
\begin{proof}
Let $\mbP$ be the set of finite subsets of $\mbQ$ (or $\mbN$, or any other countable linear order). The standard Ramsey-theoretic argument to show the existence of indiscernible sequences $(a_i)_{i\in\mbQ}$ (realizing a given partial Ehrenfeucht--Mostowski type) also shows the existence of systems $(a_F)_{F\in\mbP}$ (realizing a given partial \guillemotleft Ehren\-feucht--Mos\-tows\-ki $\mbP$-type\guillemotright, defined in the natural way) that are indiscernible in the following sense: for every strictly increasing function $\alpha\colon\mbQ\to\mbQ$, which we extend to $\alpha\colon\mbP\to\mbP$ by $\alpha F=\set{\alpha(i):i\in F}$, we have
$$(a_F)_{F\in\mbP}\equiv (a_{\alpha F})_{F\in\mbP}.$$

Let $M$ be $\aleph_0$-categorical, non-compact. Restricting the index set to $\mbP_n = \set{F\in\mbP:|F|\leq n}$ and proceeding inductively on $n$, we can build an indiscernible system $(M_F)_{F\in\mbP}$ of \emph{distinct} copies of $M$ such that $M_F\subseteq M_{F'}$ whenever $F\subseteq F'$. If we identify $M$ with the closed union of the system, then each strictly increasing $\alpha\colon\mbQ\to\mbQ$ induces an elementary embedding $\tau_\alpha\colon M\to M$ uniquely defined by: $\tau_\alpha M_F=M_{\alpha F}$ (as tuples) for every $F\in\mbP$. In particular we get an action $\Aut(\mbQ,<)\actson^\tau M$, which is moreover continuous and faithful for the pointwise convergence topology of $\Aut(\mbQ,<)$.
\end{proof}

\noindent\hrulefill

\section{Property (T) for automorphism groups of $\aleph_0$-categorical metric structures}\label{sec:PropT}

\subsection{From independence to orthogonality}\label{subsec:Indep-to-Ortho}
Before discussing Property (T) we prove a key proposition that is also interesting in itself.

A \emph{unitary representation} of a topological group $G$ is a continuous action of $G$ on a complex Hilbert space by unitary (i.e., linear isometric) transformations.

Suppose $G$ is the automorphism group of an $\aleph_0$-categorical structure $M$, and suppose we are given a unitary representation $G\actson^\sigma H$ and a distinguished unit vector $\xi\in H$. Then $O=\ov{\sigma(G)\xi}$, the orbit closure of~$\xi$, is an imaginary sort of $M$ as per Lemma~\ref{lem:G-actson-O}. Thus, given an elementary self-embedding $e\colon M\to M$, we can consider $e(O)\subseteq O$, the image of $O$ under the canonical extension of $e$ to $M^\meq$.

Now let $M_I$, $M_J$, $M_K$ be elementary substructures of $M$ (that is, the images of some given elementary self-embeddings of $M$) and let $O_I,O_J,O_K\subseteq O$ be the corresponding imaginary sorts as above. We let
$$H_I=\ov{\gen{O_I}}\subseteq H$$
be the closed span of $O_I$, and we define $H_J$ and $H_K$ similarly.

\begin{prop}\label{prop:indep-transfer} In the situation described above, the following hold.
\begin{enumerate}
\item Suppose $M_K\preceq M_J$. If $M_I\ind_{M_K}M_J$, then $H_I\bigperp_{H_K}H_J$.
\item Suppose $M$ satisfies $\dcl(\emptyset)=\acl(\emptyset)$ and the representation $G\actson^\sigma H$ has no invariant unit vectors. If $M_I\ind M_J$, then $H_I\bigperp H_J$.
\end{enumerate}
\end{prop}

\noindent Note that although this is reminiscent of Proposition~\ref{prop:ind-vs-ort}, it cannot be deduced directly from it because the full Hilbert space (i.e., its full unit sphere) need not be interpretable in $M$. 

\begin{proof}
(1) We suppose $M_K\preceq M_J$ and $M_I\ind_{M_K}M_J$. Take an element $a\in O_I$ and consider its orthogonal projection $\pi_J(a)$ to $H_J$. We want to show that $\pi_J(a)\in H_K$. Suppose for a contradiction that the orthogonal projection $\zeta$ of $\pi_J(a)$ to the orthogonal complement of $H_K$ has norm $\norm{\zeta}>\epsilon^{1/2}$ for some $\epsilon>0$. In particular, $\ip{a,\zeta}=\norm{\zeta}^2>\epsilon$.
Choose tuples $b\in O_J^n$, $c\in O_J^m$ and scalars $\lambda_i,\mu_j$ yielding approximations:
$$\textstyle\norm{\pi_J(a)-\sum\lambda_i b_i}<\delta\text{ \ and \ }\norm{\zeta-\sum\mu_j c_j}<\epsilon/3,$$
where $\delta>0$ is small enough that $\delta\sum\abs{\mu_j}<\epsilon/3$. We may as well suppose $\norm{\sum\lambda_i b_i}<1$. Note that we have $\abs{\ip{a,\sum\mu_jc_j}}>2\epsilon/3$ and $\abs{\ip{\chi,\sum\mu_j c_j}}<\epsilon/3$ for every $\chi\in H_K$ with $\norm{\chi}\leq 1$.

Now let $\phi(x,y)$ be the predicate in the variables $x$, $y=(y_j)_{j<m}$ of the sort $O$ given by:
$$\textstyle\phi(x,y)=\ip{x,\sum\mu_jy_j}.$$
Since $\phi\colon O^{1+m}\to\mbC$ is continuous and $G$-invariant, this is indeed a formula of $M^\meq$. Moreover, by the stability of the Hilbert space, $\phi(x,y)$ is stable. Let $p=\tp(a/M_J)$. Since
$$M_I\ind_{M_K}M_J,$$
the definition $\dd{p}{\phi}(y)$ is $M_K$-definable. Note that for $d\in O_J^m$ we have $\dd{p}{\phi}(d)=\ip{\pi_J(a),\sum\mu_jd_j}$. On the other hand, let us also consider the formula given by $\psi(z,y)=\ip{\sum\lambda_i z_i,\sum\mu_j y_j}$. Then for every $d\in O_J^m$,
$$\textstyle \bigabs{\dd{p}{\phi}(d)-\psi(b,d)} = \bigabs{\ip{\pi_J(a)-\sum\lambda_i b_i,\sum\mu_j d_j}} \leq \norm{\pi_J(a)-\sum\lambda_i b_i}\cdot\norm{\sum\mu_j d_j}\leq\delta\sum\abs{\mu_j}.$$
In other words,
$$\textstyle M_J^\meq\models \sup_y \bigabs{\dd{p}{\phi}(y)-\psi(b,y)}\leq \delta\sum\abs{\mu_j}.$$
Since $\dd{p}{\phi}(y)$ is $M_K$-definable and $M_K\preceq M_J$, we deduce that there is $b'\in O_K^m$ such that $\norm{\sum\lambda_i b_i'}\leq 1$ and
$$\textstyle\bigabs{\dd{p}{\phi}(d)-\psi(b',d)} <\epsilon/3$$
for every $d\in O_K^m$, and in fact for every $d\in O^m$. Specializing in $d=c$ we obtain
$$\textstyle\bigabs{\ip{a,\sum\mu_jc_j}-\ip{\sum\lambda_i b'_i,\sum\mu_j c_j}}<\epsilon/3,$$
which contradicts the fact that $\ip{a,\sum\mu_jc_j}>2\epsilon/3$ and $\ip{\chi,\sum\mu_j c_j}<\epsilon/3$ for every $\chi\in H_K$ with $\norm{\chi}\leq 1$. Thus we have shown that $a\bigperp_{H_K}H_J$ for every $a\in O_I$, and this implies $H_I\bigperp_{H_K}H_J$, as desired.

(2) Now we suppose every algebraic element is definable, $\sigma$ has no invariant unit vectors, and $M_I\ind M_J$. Let $a\in O_I$ and let us show that the orthogonal projection $\pi_J(a)\in H_J$ is $\sigma(G)$-invariant, so that by our hypothesis on the representation we get $\pi_J(a)=0$. Suppose to the contrary that there are $\epsilon>0$, $g\in G$ and $c\in O$ such that
$$\abs{\ip{\pi_J(a)-\sigma_g\pi_J(a),c}}>\epsilon.$$

We consider the formula given by the inner product, $\phi(x,y)=\ip{x,y}$, which is stable. Let as before $p=\tp(a/M_J)$. Since $M_I\ind M_J$, the definition $\dd{p}{\phi}(y)$ is $\acl(\emptyset)$-definable. But $M$ has $\acl(\emptyset)=\dcl(\emptyset)$, so it follows that $\dd{p}{\phi}(y)$ is $\emptyset$-definable and hence $G$-invariant.

Let $b\in O_J^n$ and $\lambda\in\mbC^n$ be such that $r=\norm{\pi_J(a)-\sum\lambda_i b_i}<\epsilon/4$. Define $\psi(z,y)=\ip{\sum\lambda_i z_i,y}$. Since $\dd{p}{\phi}(d)=\ip{\pi_J(a),d}$ for $d\in O_J$, we have, similarly as before:
$$\textstyle M_J^\meq\models \sup_y \abs{\dd{p}{\phi}(y)-\psi(b,y)}\leq r,$$
and therefore $\abs{\dd{p}{\phi}(d)-\psi(b,d)}<\epsilon/4$ for every $d\in O$ because $M_J\preceq M$. Now by the $G$-invariance of $\dd{p}{\phi}(y)$ we obtain that $\abs{\psi(b,d)-\psi(b,g\inv d)}<\epsilon/2$ for every $d\in O$. Hence also
$$\abs{\ip{\pi_J(a)-\sigma_g\pi_J(a),d}}=\abs{\ip{\pi_J(a),d}-\ip{\pi_J(a),g\inv d}} < \epsilon/4+\abs{\psi(b,d)-\psi(b,g\inv d)}+\epsilon/4 <\epsilon.$$
Specializing in $d=c$ we get a contradiction. We conclude that $\pi_J(a)=0$ for all $a\in O_I$ and thus $H_I\bigperp H_J$.
\end{proof}

\subsection{Property (T) and splitting Kazhdan sets}\label{subsec:Prop-(T)} A topological group $G$ has \emph{Property~(T)} if there are a compact subset $Q\subseteq G$ and some $\epsilon>0$ such that every unitary representation $G\actson^\sigma H$ \emph{with no invariant unit vectors} satisfies the stronger condition:
$$\max_{g\in Q}\norm{\sigma_g\xi-\xi}\geq\epsilon$$
for every unit vector $\xi\in H$. In that case, $Q$ is a \emph{Kazhdan set} and $(Q,\epsilon)$ is a \emph{Kazhdan pair} for $G$. If $Q$ can be chosen finite, then $G$ is said to have \emph{strong Property~(T)}.

As in \cite{evatsaFree}, we will use that Property~(T) is preserved under extensions of Polish groups.

\begin{lem}\label{sT:lem:extensions}
Let $G$ be a Polish group and $N$ be a closed normal subgroup of $G$. If $N$ and $G/N$ have Property~(T), then so does $G$. Moreover, if $N$ and $G/N$ have finite Kazhdan sets, then $G$ has a finite Kazhdan set as well.
\end{lem}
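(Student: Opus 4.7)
The plan is to adapt the classical extension argument for Property~(T) from the locally compact setting (see, e.g., \cite{evatsaFree} and the references therein) to the Polish framework. The key structural observation is that for any unitary representation $G\actson^\sigma H$, the closed subspace $H^N$ of $N$-fixed vectors is $G$-invariant, because $N\trianglelefteq G$ implies $\sigma_n\sigma_g v=\sigma_g\sigma_{g\inv ng}v=\sigma_g v$ for every $v\in H^N$, $n\in N$, $g\in G$; its orthogonal complement $(H^N)^\perp$ is then also $G$-invariant. Therefore $H^N$ carries a unitary representation of $G/N$, while $(H^N)^\perp$ carries a unitary representation of $N$ with no $N$-invariant unit vectors by construction. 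Moreover, if $H$ has no $G$-invariant unit vector then neither does $H^N$ as a $G/N$-representation.

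Next I would fix Kazhdan pairs $(Q_N,\epsilon_N)$ for $N$ and $(Q_{G/N},\epsilon_{G/N})$ for $G/N$, lift $Q_{G/N}$ to a compact subset $\tilde Q\subseteq G$, and let $Q\coloneqq\tilde Q\cup Q_N$. Given a unit vector $\xi\in H$, I would write $\xi=\xi_0+\xi_1$ with $\xi_0\in H^N$ and $\xi_1\in(H^N)^\perp$, and consider two cases depending on whether $\norm{\xi_0}\geq 1/\sqrt 2$ or $\norm{\xi_1}\geq 1/\sqrt 2$. In the first case, applying the Kazhdan property for $G/N$ to $\xi_0/\norm{\xi_0}\in H^N$ yields $g\in\tilde Q$ with $\norm{\sigma_g\xi_0-\xi_0}\geq\epsilon_{G/N}/\sqrt 2$, and since the decomposition $\sigma_g\xi-\xi=(\sigma_g\xi_0-\xi_0)+(\sigma_g\xi_1-\xi_1)$ is orthogonal, the same lower bound holds for $\norm{\sigma_g\xi-\xi}$. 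In the second case, the Kazhdan property for $N$ applied to $\xi_1/\norm{\xi_1}\in(H^N)^\perp$ produces $h\in Q_N$ with $\norm{\sigma_h\xi_1-\xi_1}\geq\epsilon_N/\sqrt 2$; since $\sigma_h$ fixes $\xi_0$, we obtain $\norm{\sigma_h\xi-\xi}\geq\epsilon_N/\sqrt 2$. Hence $(Q,\min(\epsilon_N,\epsilon_{G/N})/\sqrt 2)$ is a Kazhdan pair for $G$.

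The only step requiring comment beyond the locally compact setting is the lifting of a compact Kazhdan set from $G/N$ to $G$. This is the main potential obstacle, and I would handle it by invoking the standard fact that the quotient map $G\to G/N$ between Polish groups is continuous and open, so compact subsets admit compact preimages under a suitable Borel or continuous selection; alternatively, since in the applications of this lemma in the paper only finite Kazhdan sets are relevant, one can just lift element by element. For the ``moreover'' clause, the latter remark makes the argument immediate: if $Q_N$ and $Q_{G/N}$ are finite, then $Q=\tilde Q\cup Q_N$ is finite as well.
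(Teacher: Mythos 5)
The paper offers no argument of its own here --- it simply cites Proposition 1.7.6 and Remark 1.7.9 of \cite{bekkaKazhdan} --- and the body of your proposal is precisely the standard argument behind that citation: $H^N$ and $(H^N)^\perp$ are $G$-invariant, the $G$-action on $H^N$ factors (continuously, since the quotient map is open) through $G/N$ and has no invariant unit vectors if $H$ has none, $(H^N)^\perp$ has no $N$-invariant unit vectors, and the two-case estimate using the orthogonality of the decomposition $\xi=\xi_0+\xi_1$ yields the Kazhdan pair $\bigl(Q,\min(\epsilon_N,\epsilon_{G/N})/\sqrt{2}\bigr)$. That part is correct, and the finite-Kazhdan-set clause is indeed immediate.

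The step you should repair is the lifting of a \emph{compact} Kazhdan set from $G/N$ to $G$. Neither of your proposed justifications works as stated: a Borel section of the quotient map does not carry compact sets to compact sets, and continuous sections need not exist at all (already $\mathbb{R}\to\mathbb{R}/\mathbb{Z}$ admits none). Moreover, the fallback that ``only finite Kazhdan sets are relevant in the applications'' is not accurate for this paper: the lemma is applied with $N=G^\circ$ and $G/N=bG$, the Bohr compactification, a compact group whose Kazhdan set is naturally taken to be all of $bG$ --- compact but in general infinite --- so the compact case is genuinely used. The correct fix is the classical fact that a continuous open surjection between Polish spaces, in particular the quotient map $G\to G/N$ of a Polish group by a closed normal subgroup, is compact-covering: every compact subset of $G/N$ is the image of some compact subset of $G$. (One can prove this directly by building a finitely branching tree of balls of radii $2^{-n}$ in $G$ whose images cover the given compact set coherently, and using completeness.) With that fact substituted for your selection argument, your proof is complete and agrees with the argument in the cited reference.
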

\begin{proof}
See Proposition 1.7.6 and Remark 1.7.9 of the book \cite{bekkaKazhdan}.
\end{proof}

Now suppose $G$ is the automorphism group of an $\aleph_0$-categorical structure. By virtue of Corollary~\ref{cor:Bohr}, $G$ has a smallest cocompact normal subgroup, $G^\circ$. Moreover, by Proposition~\ref{prop:Mcirc-is-cat}, $G^\circ$ is also the automorphism group of an $\aleph_0$-categorical structure, and in the latter every algebraic element is definable. Since compact groups have Property (T), Lemma~\ref{sT:lem:extensions} says that in order to prove $G$ has Property (T) it is enough to prove $G^\circ$ has Property (T). In other words, we may restrict our attention to automorphism groups of separably categorical structures for which $\dcl(\emptyset)=\acl(\emptyset)$.

Thus, for the rest of the section we fix an $\aleph_0$-categorical structure $M$ with $\dcl(\emptyset)=\acl(\emptyset)$ and let $G=\Aut(M)$. Moreover, we fix an action by automorphisms $\mbF\actson^\tau M$ and a \guillemotleft cairn\guillemotright\ of elementary self-embeddings $\set{M_I}_{I\in\mbI}\subseteq \End(M)\cup\set{\emptyset}$ as given by Theorem~\ref{sCai:thm:cairns}. We will show that $\set{\tau_a,\tau_b}$ is a Kazhdan set for $G$.

For the following two lemmas we fix in addition a unitary representation $G\actson^\sigma H$ with no invariant unit vectors. We assume moreover that $\sigma$ is \emph{cyclic}, i.e., that there is a unit vector $\xi\in H$ such that $H$ is the closed linear span of the orbit of $\xi$.

We apply the construction of \textsection \ref{subsec:Indep-to-Ortho} to the representation $\sigma$, the unit vector $\xi$, and the self-embeddings $M_I$. More precisely, let $O=\ov{\sigma(G)\xi}$ be the closed orbit of~$\xi$, which we see as an imaginary sort of $M$. For each non-empty $I\in\mbI$, we define $O_I$ as the image of $O$ by the elementary self-embedding $M_I\colon M^\meq\to M^\meq$. Thus $O_I\subseteq O\subseteq H$. We then let $H_I=\ov{\gen{O_I}}\subseteq H$ be the closed span of $O_I$. Finally, we let $H_\emptyset=\set{0}$ be the trivial subspace of $H$.

The independence conditions of the $M_I$ together with Proposition~\ref{prop:indep-transfer} yield the following.

\begin{lem}\label{sT:lem:ind-transfer}
For every $I,J\in\mbI$ and $K=I\cap J$, we have $H_I\bigperp_{H_K}H_J$.
\end{lem}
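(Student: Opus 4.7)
The plan is to transfer the stable independence $M_I\ind_{M_K}M_J$ from Theorem~\ref{sCai:thm:cairns} to the desired orthogonality in $H$ via the stability of the inner product on the sort $O$. By the triviality of Hilbert space independence (combined with Lemma~\ref{lem:orthogonality-equivalence}, and after dispatching the trivial cases $I=\emptyset$, $I\subseteq J$ or $J\subseteq I$, in which the conclusion is immediate), it suffices to show that $\langle u,v\rangle=\langle \pi_{H_K}(u),v\rangle$ for every $u\in O_I$ and $v\in O_J$. The key point is that the inner product $\phi(x,y)\coloneqq \langle x,y\rangle$, regarded as a $\mbC$-valued function on the imaginary sort $O\times O$ of $M^\meq$, is a definable predicate (being $G$-invariant and continuous, by Lemma~\ref{lem:G-actson-O} and the dictionary for $\aleph_0$-categoricity) and moreover a \emph{stable} formula. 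For the latter, the positive-semidefiniteness of Gram matrices associated to $\phi$ is elementarily preserved, so any putative order pattern for $\phi$ on $O\times O$ inside an elementary extension of $M^\meq$ would realize itself as an order pattern for the inner product in an ambient Hilbert space (built as the completion of the formal $\mbC$-span of $O$ with form $\phi$), contradicting the well-known stability of Hilbert spaces.

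With $\phi$ stable, I would fix $u\in O_I$ and invoke local stability theory. Over the submodel $M_K$, the $\phi$-definition $\psi_K$ of $\tp(u/M_K^\meq)$ is an $M_K$-definable predicate on the sort $O$, characterized by $\psi_K(o)=\langle u,o\rangle$ for $o\in O_K$. Theorem~\ref{sCai:thm:cairns} gives $M_I\ind_{M_K}M_J$; since $u\in M_I^\meq$, by the very definition of stable independence the $\phi$-definition of $\tp(u/M_J^\meq)$ is $\acl(M_K)$-definable. By the stationarity of stable types over models, this $\phi$-definition must coincide with the canonical $M_K$-definable extension of $\psi_K$ (the non-forking extension is unique). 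In particular, $\langle u,v\rangle=\psi_K(v)$ for every $v\in O_J$.

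It then remains to identify $\psi_K$ with $\chi(v)\coloneqq \langle \pi_{H_K}(u),v\rangle$. Since $\pi_{H_K}(u)\in H_K=\overline{\langle O_K\rangle}$ is a norm-limit of finite complex linear combinations of elements of $O_K$, the predicate $\chi$ is a uniform limit (on the bounded sort $O$) of predicates of the form $v\mapsto \sum_i \overline{c_i}\langle o_i,v\rangle$ with $o_i\in O_K$, and hence is $M_K$-definable. Both $\psi_K$ and $\chi$ are $M_K$-definable predicates on the sort $O$ that agree on $O_K$ (for $o\in O_K\subseteq H_K$, $u-\pi_{H_K}(u)\perp o$ gives $\psi_K(o)=\langle u,o\rangle=\langle \pi_{H_K}(u),o\rangle=\chi(o)$). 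By elementarity of $M_K\preceq M$, the statement $\sup_{v\in O}|\psi_K(v)-\chi(v)|=0$ transfers from $M_K$ to $M$, so $\psi_K=\chi$ on the whole sort $O$, and in particular on $O_J$. Combining, $\langle u,v\rangle=\psi_K(v)=\chi(v)=\langle \pi_{H_K}(u),v\rangle$, which is the reduction we needed.

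The main obstacle, to my mind, is the rigorous justification of the stability of $\phi$ on the imaginary sort $O$, since this amounts to importing the ``external'' stability of Hilbert spaces into $M^\meq$ (via the first-order expressible positive-semidefiniteness of Gram matrices, which lets one reconstruct an ambient Hilbert space in any elementary extension). Once that is in hand, the remainder is a clean application of stationarity for stable types over models combined with an elementarity argument.
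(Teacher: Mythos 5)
Your treatment of the case $K\neq\emptyset$ is essentially correct and runs along the same lines as the paper: the inner product $\phi(x,y)=\ip{x,y}$ on the sort $O$ is stable, the $\phi$-definition of $\tp(u/M_J^\meq)$ is over $M_K$ by $M_I\ind_{M_K}M_J$, and comparing it with the explicitly $M_K$-definable predicate $v\mapsto\ip{\pi_{H_K}(u),v}$ (they agree on $O_K$, hence everywhere by elementarity) gives $\ip{u,v}=\ip{\pi_{H_K}(u),v}$ for $v\in O_J$. If anything, identifying the definition directly with $\ip{\pi_{H_K}(u),\cdot}$ is a slightly cleaner organization than the paper's approximation-and-contradiction bookkeeping, and your ``stationarity over models'' step plays the same role as the paper's (equally glossed) passage from $\acl(M_K)$-definability to $M_K$-definability.

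The genuine gap is the case of disjoint nonempty intervals, $K=I\cap J=\emptyset$, which is not among the cases you dispatch as trivial ($I=\emptyset$, $I\subseteq J$, $J\subseteq I$) and for which your argument collapses: $M_\emptyset=\emptyset$ is not a submodel, there is no $\phi$-definition ``over the submodel $M_K$'', and $H_\emptyset=\set{0}$, so the claim becomes $H_I\bigperp H_J$ outright. This is not a formal corner case: it is exactly the instance needed for $n=-1$ in Lemma~\ref{lem:Hu-ort-En-Hv}, i.e.\ it seeds the whole direct-sum decomposition, and it is the only place where the standing hypotheses enter. Indeed, without the assumption that $\sigma$ has no invariant unit vectors the statement is false (if $\xi$ were invariant, $H_I=H_J=\mbC\xi$), so no amount of stable independence alone can prove it; your proposal never invokes that hypothesis, nor the assumption $\dcl(\emptyset)=\acl(\emptyset)$ on $M$. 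The missing argument is the paper's second case: from $M_I\ind M_J$ the $\phi$-definition of $\tp(u/M_J^\meq)$ is $\acl(\emptyset)$-definable, hence $\emptyset$-definable by $\dcl(\emptyset)=\acl(\emptyset)$, hence $G$-invariant; approximating $\pi_{H_J}(u)$ by combinations of elements of $O_J$ one deduces that $\ip{\pi_{H_J}(u)-\sigma_g\pi_{H_J}(u),d}=0$ for all $d\in O$ and $g\in G$, so $\pi_{H_J}(u)$ is a $\sigma(G)$-invariant vector and therefore $0$, giving $H_I\bigperp H_J$. Without this case the lemma, and with it the proof of Property~(T), does not go through.
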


Now for every $n\in\mbN$ we consider
$$E_n=\ov{\gen{H_{wI_n}:w\in\mbF}},$$
the closed subspace generated by those $H_I$ such that $I$ is a translate of $I_n\in\mbI_0$. We define in addition $E_{-1}=\set{0}$, the trivial subspace. Note that since $M=\ov{\bigcup_{I\in\mbI} M_I}$, we also have $O=\ov{\bigcup_{I\in\mbI} O_I}$ and $H=\ov{\bigcup_{I\in\mbI} H_I}=\ov{\bigcup_{n\in\mbN} E_n}$.

On the other hand, since $\tau_w M_I = M_{wI}$ for every $w\in\mbF$ and $I\in\mbI$, for the induced action $\mbF\actson^\tau H$ we have
$$\tau_wO_I = O_{wI}\text{ \ and \ }\tau_wH_I = H_{wI}.$$
In particular, the subspaces $E_n$ are invariant under the action of $\mbF$.

\begin{lem}\label{lem:Hu-ort-En-Hv}
For every $n\in\mbN\cup\set{-1}$ and $u,v\in\mbF$ with $u\neq v$, we have $H_{uI_{n+1}}\bigperp_{E_n}H_{vI_{n+1}}$.
\end{lem}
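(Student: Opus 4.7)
The plan is to bootstrap from the pointwise orthogonality $H_{uI_{n+1}}\bigperp_{H_K}H_{vI_{n+1}}$ provided by Lemma~\ref{sT:lem:ind-transfer}, where $K=uI_{n+1}\cap vI_{n+1}$, to the orthogonality over the larger subspace $E_n$. For $n=-1$, $uI_0=\set{u}$ and $vI_0=\set{v}$ are disjoint singletons, so the conclusion is immediate from Lemma~\ref{sT:lem:ind-transfer}. For $n\geq 0$, observe first that $K$ is a proper subinterval of $uI_{n+1}$, so by Lemma~\ref{sInt:lem:subintervals-of-I_n+1} applied to $u\inv K$ we have $K\leq uI_n$ or $K\leq u\ell_n I_n$; in either case $H_K\subseteq H_{uI_n}+H_{u\ell_n I_n}\subseteq E_n$.

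The crux is the auxiliary claim that $\pi_{E_n}(H_{uI_{n+1}})\subseteq H_{uI_{n+1}}$. Granting it, I apply Lemma~\ref{sT:lem:ind-transfer} (in its pointwise form via the triviality property) to the pairs $(a,b)$ and $(\pi_{E_n}(a),b)$ for $a\in H_{uI_{n+1}}$ and $b\in H_{vI_{n+1}}$; the second application is legitimate precisely because $\pi_{E_n}(a)\in H_{uI_{n+1}}$. Using $\pi_{H_K}\pi_{E_n}=\pi_{H_K}$ (from $H_K\subseteq E_n$), both computations yield the same value $\ip{\pi_{H_K}(a),\pi_{H_K}(b)}$, whence $\ip{a-\pi_{E_n}(a),b}=0$. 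Since $a-\pi_{E_n}(a)\perp E_n$ automatically, this is precisely the single-vector statement $a\bigperp_{E_n}b$, and the triviality property then promotes it to $H_{uI_{n+1}}\bigperp_{E_n}H_{vI_{n+1}}$.

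To prove the auxiliary claim, I decompose $H_{uI_{n+1}}=(H_{uI_n}+H_{u\ell_n I_n})\oplus W$ orthogonally and show $W\perp E_n$; since $H_{uI_n}+H_{u\ell_n I_n}\subseteq E_n$, this forces $\pi_{E_n}(a)$ to coincide with the $(H_{uI_n}+H_{u\ell_n I_n})$-component of any $a\in H_{uI_{n+1}}$, and hence to lie in $H_{uI_{n+1}}$. Since $E_n$ is the closed span of $\set{H_{wI_n}:w\in\mbF}$, it suffices to check $W\perp H_{wI_n}$ for each $w\in\mbF$. The cases $wI_n\in\set{uI_n,u\ell_n I_n}$ are trivial. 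Otherwise, setting $K'=wI_n\cap uI_{n+1}$, either $K'=\emptyset$ and Lemma~\ref{sT:lem:ind-transfer} yields $H_{wI_n}\bigperp H_{uI_{n+1}}$, or $K'$ is a nonempty proper subinterval of $uI_{n+1}$ (as $wI_n\not\subseteq uI_{n+1}$ by Lemma~\ref{sInt:lem:subintervals-of-I_n+1}), forcing $H_{K'}\subseteq H_{uI_n}+H_{u\ell_n I_n}$. In the latter case, for $\widetilde{a}\in W$ and $x\in H_{wI_n}$, Lemma~\ref{sT:lem:ind-transfer} delivers $\ip{x,\widetilde{a}}=\ip{\pi_{H_{K'}}(x),\pi_{H_{K'}}(\widetilde{a})}=0$, since $\pi_{H_{K'}}(\widetilde{a})=0$ by construction of $W$.

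The main obstacle is the auxiliary claim $\pi_{E_n}(H_{uI_{n+1}})\subseteq H_{uI_{n+1}}$, which encodes a structural fact about how $H_{uI_{n+1}}$ sits inside the globally defined subspace $E_n$. The individual pairwise independences from Lemma~\ref{sT:lem:ind-transfer} do not suffice in isolation; one must leverage the full family of pairwise interactions---controlled by the interval combinatorics of $\mbI$---to conclude that $H_{uI_{n+1}}$ meets $E_n$ only through the local pieces $H_{uI_n}$ and $H_{u\ell_n I_n}$.
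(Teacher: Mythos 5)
Your proof is correct, and its core is the same as the paper's: both rest on the combinatorial fact that, for any relevant interval $J$, the intersection $K=uI_{n+1}\cap J$ is a subinterval of $uI_n$ or of $u\ell_n I_n$, so that $H_K\subseteq V:=\ov{H_{uI_n}+H_{u\ell_n I_n}}\subseteq H_{uI_{n+1}}\cap E_n$, after which Lemma~\ref{sT:lem:ind-transfer} lets one enlarge the base. The only real difference is presentational: the paper stays inside the $\bigperp$ calculus (promoting the base from $H_K$ to $H_I'$ and then from $H_I'$ to $E_n$, via triviality), whereas you isolate the equivalent structural fact as the explicit projection claim $\pi_{E_n}(H_{uI_{n+1}})\subseteq H_{uI_{n+1}}$ (proved by showing $W=H_{uI_{n+1}}\ominus V$ is orthogonal to every $H_{wI_n}$) and then finish with a direct inner-product computation.
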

\begin{proof}
For $n=-1$ this is a particular case of the previous lemma. Let $n\in\mbN$, $I=I_{n+1}$ and $\ell=\ell_n$. By invariance, it is enough to show that $H_I\bigperp_{E_n}H_{uI}$ for every $u\neq e$.

Let $H_I'=H_{I_n}H_{\ell I_n}$ be the subspace generated by $H_{I_n}$ and $H_{\ell I_n}$. Now, if $J$ is any interval not containing $I$ and we take $K=I\cap J$, then $K\leq I_n$ or $K\leq \ell I_n$. Hence $H_K\subseteq H_I'\subseteq H_I$, and since by Lemma~\ref{sT:lem:ind-transfer} we have $H_I\bigperp_{H_K}H_J$, we deduce that
$$H_I\bigperp_{H'_I}H_J.$$
This is true in particular for every $J=uI$ with $u\neq e$ as well as for every $J=wI_n$. By the triviality of the orthogonality relation, we obtain that $$H_I\bigperp_{H_I'}E_nH_{uI}$$ for every $u\neq e$. Since $H_I'\subseteq E_n$, we have $H_I\bigperp_{E_n}H_{uI}$, as desired.
\end{proof}

Let $\eta=\sqrt{2-\sqrt{3}}$ and let $\mbF\actson^\lambda \ell^2(\mbF)$ be the left-regular unitary representation of $\mbF$. Then one has, for every unit vector $\xi\in \ell^2(\mbF)$,
$$\max_{\ell\in\set{a,b}}\norm{\lambda_\ell\xi-\xi}\geq\eta.$$
Moreover, the same holds for every multiple of $\lambda$. That is, for every representation $\mbF\actson^\pi H$ where $\pi=\bigoplus\lambda$ is a (possibly infinite) direct sum of copies of $\lambda$ one has $\max_{\ell\in\set{a,b}}\norm{\pi_\ell\xi-\xi}\geq\eta$ for every unit vector $\xi\in H$. This follows from Kesten's computation \cite{kestenSym} of the operator norm of $\lambda_a+\lambda_{a\inv}+\lambda_b+\lambda_{b\inv}$, as shown in \cite[pp.~515-516]{bekkaUnitary}.

\begin{prop}
Let $Q=\set{\tau_a,\tau_b}$. Then $(Q,\eta)$ is a Kazhdan pair for $G$. Moreover, if $G\actson^\sigma H$ is a unitary representation of $G$ without invariant unit vectors, then $\sigma$ restricts to a multiple of the left-regular representation of the subgroup generated by~$Q$.
\end{prop}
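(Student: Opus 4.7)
My plan is to show that, viewed as an $\mbF$-representation through $a,b\mapsto\tau_a,\tau_b$, the space $H$ decomposes as a (possibly zero) multiple of the left-regular representation $\lambda$ of $\mbF$. Once this is established, the Kazhdan bound $(Q,\eta)$ drops out immediately from the Kesten-type estimate recalled just before the statement. The \guillemotleft moreover\guillemotright\ clause then follows as well, because $\lambda$ is faithful when non-zero: either $H=0$ (in which case the claim is vacuous) or the surjection $\mbF\to F=\gen{\tau_a,\tau_b}$ is forced to be an isomorphism, so that the restriction of $\sigma$ to $F$ is literally a multiple of the left-regular representation of $F$.

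To bring the preceding two lemmas to bear, I would first reduce to the case of a cyclic representation: any $\sigma$ splits as an orthogonal sum of cyclic $G$-subrepresentations, each inheriting the absence of $G$-invariant unit vectors, so it suffices to handle the cyclic case. For such a $\sigma$ the subspaces $H_I$ and $E_n$ are available. The next step is to refine the $E_n$-filtration: for $n\geq 0$ and $w\in\mbF$, set $\widetilde H_{wI_n}=H_{wI_n}\ominus E_{n-1}$. Lemma~\ref{lem:Hu-ort-En-Hv} applied to the index $n-1$, together with Lemma~\ref{lem:orthogonality-equivalence}, yields that the $\widetilde H_{wI_n}$ ($w\in\mbF$) are pairwise orthogonal. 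Since $E_n$ is generated by $E_{n-1}$ and the subspaces $H_{wI_n}\subseteq E_{n-1}+\widetilde H_{wI_n}$, this forces $E_n\ominus E_{n-1}=\bigoplus_{w\in\mbF}\widetilde H_{wI_n}$, and telescoping together with $H=\ov{\bigcup_n E_n}$ gives the orthogonal decomposition
$$H=\bigoplus_{n\geq 0}\bigoplus_{w\in\mbF}\widetilde H_{wI_n}.$$

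Finally, I would identify the $\mbF$-action on each inner sum as a multiple of $\lambda$. Since $\tau_u$ maps $M_I$ to $M_{uI}$ and preserves $E_{n-1}$ (which is $\mbF$-invariant by construction), it sends $\widetilde H_{wI_n}$ isometrically onto $\widetilde H_{uwI_n}$. Thus $\mbF$ permutes the summands of $\bigoplus_w\widetilde H_{wI_n}$ according to its left action on $\set{wI_n:w\in\mbF}$, and this action is \emph{free}: if $uI_n=I_n$ with $u\neq e$, then every power $u^k$ would belong to the finite set $I_n$, which is impossible as $\mbF$ is torsion-free. Hence each inner sum is $\mbF$-equivariantly isomorphic to $\widetilde H_{I_n}\otimes\ell^2(\mbF)$, a multiple of $\lambda$, and summing over $n$ completes the argument. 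The step that requires most care is the mutual orthogonality of the $\widetilde H_{wI_n}$, but this reduces to Lemma~\ref{lem:Hu-ort-En-Hv}, which was set up precisely to supply it; once that input is in hand, the remainder is essentially bookkeeping and the free-action verification above.
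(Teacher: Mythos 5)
Your proposal is correct and follows essentially the same route as the paper: reduce to a cyclic representation, form $\widetilde H_{wI_n}=H_{wI_n}\ominus E_{n-1}$, use Lemma~\ref{lem:Hu-ort-En-Hv} with Lemma~\ref{lem:orthogonality-equivalence} to get pairwise orthogonality, decompose $H=\bigoplus_n\bigoplus_w\widetilde H_{wI_n}$, and conclude via the Kesten estimate. Your explicit verifications that the left action of $\mbF$ on the translates $wI_n$ is free and that faithfulness of the regular representation handles the subgroup generated by $Q$ in the \guillemotleft moreover\guillemotright\ clause are correct details that the paper leaves implicit.
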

\begin{proof}
We can split $\sigma$ into a direct sum of cyclic subrepresentations:
$$H=\bigoplus H_\xi.$$
Hence, by the previous discussion, it suffices to show that the induced representation $\mbF\actson^\pi H_\xi$ on each cyclic subspace $H_\xi$ is a multiple of the left-regular representation of $\mbF$.

Thus we assume that $G\actson^\sigma H=H_\xi$ is a cyclic representation with no invariant unit vectors, and define as before the subspaces $H_I$ and $E_n$ for every $I\in\mbI$ and $n\in\mbN\cup\set{-1}$. We define, furthermore, for every interval $J=wI_n$,
$$\widetilde H_J = H_J \ominus E_{n-1},$$
the orthogonal projection of $H_J$ to the orthogonal complement of $E_{n-1}$. Then, by Lemma~\ref{lem:Hu-ort-En-Hv} together with Lemma~\ref{lem:orthogonality-equivalence},
$$\widetilde H_{uI_n}\bigperp\widetilde H_{vI_n}$$
for every $n\in\mbN$ and $u\neq v$ in $\mbF$. Besides, $\tau_w\widetilde H_J=\widetilde H_{wJ}$ for every $w\in\mbF$ and $J\in\mbI$.

Similarly, we set:
$$\widetilde E_n = E_n \ominus E_{n-1}.$$
Therefore we have, by construction:
\begin{itemize}[leftmargin=20pt]
\item $H=\bigoplus_{n\in\mbN}\widetilde E_n$.
\item $\widetilde E_n=\bigoplus_{w\in\mbF}\widetilde H_{wI_n}$.
\item The action of $\mbF$ on $H$ permutes the summands inside each $\widetilde E_n$.
\end{itemize}
This implies that $\mbF\actson^\pi H$ is a multiple of the left-regular unitary representation of $\mbF$.
\end{proof}

Let us give a name to the phenomenon of the previous proposition.

\begin{defin}
Let $G$ be a topological group. A set $Q\subseteq G$ is a \emph{splitting Kazhdan set for $G$} if it generates a rank two free subgroup $F\leq G$ with the property that every unitary representation of $G$ with no invariant unit vectors restricts to a multiple of the left-regular representation of $F$.
\end{defin}

If our group $G$ has non-trivial unitary representations, then the group generated by $\set{\tau_a,\tau_b}$ will be a rank two free subgroup of $G$ (in fact, as per Corollary~\ref{cor:WAP-nonAP}, it suffices that the WAP compactification of $G$ be non-trivial). On the other hand, if $G$ does not have any non-trivial unitary representations, then any rank two free subgroup of $G$ (which exist, by Theorem~\ref{thm:AutQ-in-PRP}) is a splitting Kazhdan set for trivial reasons.

Thus, in all generality, we have proved the following.

\begin{theorem}\label{sT:main}
Every Roelcke precompact Polish group $G$ has Property (T). Moreover, if the Bohr compactification of $G$ is trivial but $G$ is not, then $G$ admits a splitting Kazhdan set.
\end{theorem}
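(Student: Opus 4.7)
The plan is to assemble the ingredients developed through the previous sections. By the Ben Yaacov--Tsankov characterization recalled in~\textsection\ref{apartado:a0-cat}, every Roelcke precompact Polish group $G$ can be written as $G=\Aut(M)$ for some $\aleph_0$-categorical metric structure $M$. Corollary~\ref{cor:Bohr} then presents $G$ as a Polish extension of the compact metrizable group $bG$ by its smallest cocompact normal subgroup $G^\circ$. Compact groups have Property~(T) with a trivial (finite) Kazhdan set, so Lemma~\ref{sT:lem:extensions} reduces the first assertion of the theorem to proving Property~(T) for $G^\circ$.

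I would then invoke Proposition~\ref{prop:Mcirc-is-cat}, which identifies $G^\circ$ with $\Aut(M^\circ)$ for an $\aleph_0$-categorical expansion $M^\circ$ satisfying $\dcl(\emptyset)=\acl(\emptyset)$. This is precisely the situation fixed at the beginning of this section, so the proposition just established applies verbatim and yields the Kazhdan pair $(\set{\tau_a,\tau_b},\eta)$ for $G^\circ$, completing the proof of Property~(T) for all $G$ in the class. In the process we also obtain the stronger conclusion that every unitary representation of $G^\circ$ without invariant unit vectors decomposes as a multiple of the left-regular representation of the (at this point possibly degenerate) subgroup generated by $\tau_a,\tau_b$.

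For the \emph{moreover} part, I would assume that $bG$ is trivial (so that $G=G^\circ$ and the previous paragraph applies directly to $G$ itself) while the WAP compactification of $G$ is nontrivial. Under these hypotheses $bG$ is a proper factor of the WAP compactification of $G$, so the argument used to prove Corollary~\ref{cor:WAP-nonAP} supplies a stable imaginary sort $S$ of $M$ whose elements are not algebraic (being realized by the left-completion of a non-compact stably-metrizable quotient of~$G$). Remark~\ref{rem:anti-ref-free-action} then guarantees that the action $\mbF\actson^\tau M$ from Theorem~\ref{sCai:thm:cairns} is free on the non-algebraic points of $S$, which forces the homomorphism $\mbF\to G$, $\ell\mapsto\tau_\ell$, to be injective. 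Thus $\set{\tau_a,\tau_b}$ generates a rank two free subgroup $F\leq G$ and, combined with the splitting property from the proposition, provides the desired splitting Kazhdan set.

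I do not foresee a substantive obstacle at this stage, since all the genuinely hard work has been done: the cairn construction of Section~\ref{sCai} and the transfer of independence to the Hilbert space in Lemma~\ref{sT:lem:ind-transfer}. The only delicate point is the verification that the two generators produce a free subgroup of rank two in the splitting statement, and this is handled cleanly by anti-reflexivity of stable independence (Proposition~\ref{prop:prop-of-ind}\eqref{item:anti-ref}) once the WAP hypothesis furnishes a stable non-algebraic sort via Corollary~\ref{cor:WAP-nonAP}.
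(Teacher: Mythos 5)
Your proposal is correct and follows essentially the same route as the paper: reduce to $G^\circ$ via Corollary~\ref{cor:Bohr}, Proposition~\ref{prop:Mcirc-is-cat} and Lemma~\ref{sT:lem:extensions}, apply the Kazhdan-pair/splitting proposition for structures with $\dcl(\emptyset)=\acl(\emptyset)$, and obtain freeness of $\gen{\tau_a,\tau_b}$ from Remark~\ref{rem:anti-ref-free-action} together with Corollary~\ref{cor:WAP-nonAP} when the WAP compactification is non-trivial. One aside is inaccurate but harmless: compact groups have Property~(T), yet in general they need \emph{not} have a finite Kazhdan set (which is precisely why the paper's corollary on strong Property~(T) is stated conditionally on the Bohr compactification); your reduction only uses Property~(T) for $bG$, so nothing breaks.
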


\begin{cor}
A Roelcke precompact Polish group has strong Property~(T) if and only if its Bohr compactification does.
\end{cor}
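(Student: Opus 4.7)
The plan is to prove the two implications via the exact sequence $1\to G^\circ\to G\to bG\to 1$ from Corollary~\ref{cor:Bohr} together with the extension behavior of finite Kazhdan sets provided by Lemma~\ref{sT:lem:extensions}.

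Forward direction: if $(Q,\epsilon)$ is a Kazhdan pair for $G$ with $Q$ finite and $\pi\colon G\to bG$ denotes the canonical projection, then I claim $(\pi(Q),\epsilon)$ works for $bG$. Indeed, any unitary representation $\sigma$ of $bG$ without invariant unit vectors pulls back along $\pi$ to a representation $\sigma\circ\pi$ of $G$ with the same invariant vectors (using density of $\pi(G)$ in $bG$ and continuity of $\sigma$), hence without invariant unit vectors; the Kazhdan inequality on $Q$ then transfers to $\pi(Q)$.

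Reverse direction: assuming $bG$ has a finite Kazhdan set, by the ``moreover'' part of Lemma~\ref{sT:lem:extensions} it suffices to exhibit a finite Kazhdan set for $G^\circ$. By Proposition~\ref{prop:Mcirc-is-cat}, $G^\circ$ is itself a Roelcke precompact Polish group, and its own Bohr compactification is trivial: either directly because $G^\circ=\Aut(M^\circ)$ with $\dcl^{M^\circ}(\emptyset)=\acl^{M^\circ}(\emptyset)$ (so any partial elementary self-embedding of $\acl^{M^\circ}(\emptyset)$ must fix the named constants and is the identity), or by minimality of $G^\circ$ as a cocompact normal subgroup of $G$ (which forces $(G^\circ)^\circ=G^\circ$ since $(G^\circ)^\circ$, being characteristic in $G^\circ$, is cocompact normal in $G$). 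Theorem~\ref{sT:main} applied to $G^\circ$ then yields a splitting (and in particular two-element) Kazhdan set as soon as the WAP compactification of $G^\circ$ is non-trivial.

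The only point needing care, and arguably the only mild obstacle, is the degenerate case where the WAP compactification of $G^\circ$ is trivial. In that case every continuous WAP function on $G^\circ$ is constant; since the matrix coefficients of any continuous unitary representation are WAP, every such representation of $G^\circ$ decomposes as a sum of trivial one-dimensional representations and therefore admits invariant unit vectors. The hypothesis of the definition of Property~(T) is then vacuous, and the empty set serves as a (finite) Kazhdan set for $G^\circ$. Either way $G^\circ$ has a finite Kazhdan set, and a final application of Lemma~\ref{sT:lem:extensions} to the exact sequence above concludes.
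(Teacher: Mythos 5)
Your proof is correct and follows essentially the same route as the paper: the easy general forward direction via the dense-image homomorphism $G\to bG$, and the converse via the exact sequence of Corollary~\ref{cor:Bohr}, the moreover parts of Lemma~\ref{sT:lem:extensions} and Theorem~\ref{sT:main} applied to $G^\circ$. You merely spell out details the paper leaves implicit, namely that $b(G^\circ)$ is trivial and that the WAP-trivial case is vacuous because then $G^\circ$ has no non-trivial unitary representations, both of which are handled correctly.
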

\begin{proof}
The left-to-right implication is easy and holds in general; the converse follows from Corollary~\ref{cor:Bohr} and the moreover parts of the previous theorem and of Lemma~\ref{sT:lem:extensions}.
\end{proof}

\noindent\hrulefill

\section{Special cases}\label{sec:special-cases}

The construction of the action $\mbF\actson^\tau M$ of Section~\ref{sCai} involves a number of non-canonical choices. Under additional assumptions on the structure $M$, however, the construction can be rendered somewhat more canonical and simpler. As a result, in several concrete examples we are able to give a nice description of an action with the properties of Theorem~\ref{sCai:thm:cairns}, and therefore of a Kazhdan set for the automorphism group.

An instance of these arbitrary choices is the definition of the structure $M_{I_{n+1}}$, which is taken to be any submodel of the ambient model containing both $M_{I_n}$ and $M_{\ell_n I_n}$. In concrete examples there might exist a canonical submodel containing any such pair of models, for example the substructure generated by them or the real algebraic closure thereof.

A substantial simplification occurs if, in addition to this, the structure happens to be stable. In that case one can construct the action $\mbF\actson^\tau M$ in a direct manner, avoiding the inductive procedure of Proposition~\ref{sCai:prop:cairns}. Indeed, suppose the theory of $M$ is stable and the real algebraic closure of any two pair of submodels is again a submodel. Take any \emph{independent family} $\set{M_w}_{w\in\mbF}$ of copies of $M$ indexed by the free group. Recall that a family $\set{M_i}$ is independent if $M_{i_0}\ind M_{i_1}M_{i_2}\dots M_{i_n}$ for any distinct $i_0,\dots,i_n$. Because of our hypothesis, we may identify $M=\acl(\set{M_w}_{w\in\mbF})\cap\wM$, since this is indeed a separable submodel of~$\wM$. On the other hand, by the stationarity of independence in stable theories, we have $(M_w)_{w\in\mbF}\equiv (M_{aw})_{w\in\mbF}\equiv (M_{bw})_{w\in\mbF}$. Thus there are automorphisms $\tau_a$, $\tau_b$ of $M$ such that $\tau_a M_w = M_{aw}$ and $\tau_b M_w = M_{bw}$ for every $w\in\mbF$, inducing an action $\mbF\actson^\tau M$. Finally, given $I\in\mbI$, we can define $M_I\coloneqq\acl(\set{M_w}_{w\in I})\cap\wM$. It follows that the $M_I$ are submodels of $M$ satisfying $\tau_w M_I=M_{wI}$ and $M_I\ind_{M_K}M_J$ whenever $K=I\cap J$. So we have obtained an action $\mbF\actson^\tau M$ and a system $\set{M_I}_{I\in\mbI}$ with the properties of Theorem~\ref{sCai:thm:cairns}.

We summarize the previous discussion in the following theorem. We say that a subset $B\subseteq M$ \emph{algebraically generates} a substructure $N\subseteq M$ if $N=\acl(B)\cap M$.

\begin{theorem}\label{thm:special-cases}
Let $M$ be an $\aleph_0$-categorical, non-compact structure. Assume:
\begin{enumerate}
\item $M$ is stable.
\item The structure algebraically generated by any two elementary substructures of $M$ is an elementary substructure as well.
\item $M$ satisfies $\dcl(\emptyset)=\acl(\emptyset)$.
\end{enumerate}
Suppose moreover that $\set{M_w}_{w\in\mbF}$ is an independent family of elementary substructures of $M$ that algebraically generates $M$. Let $\tau_a,\tau_b\in\Aut(M)$ be automorphisms such that $\tau_a(M_w)=M_{aw}$ and $\tau_b(M_w)=M_{bw}$ for every $w\in\mbF$. Then $\set{\tau_a,\tau_b}$ is a splitting Kazhdan set for $\Aut(M)$.
\end{theorem}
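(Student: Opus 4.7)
The plan is to realize the hypotheses as a concrete instance of the construction of Section~\ref{sCai}, so that the proof of Theorem~\ref{sT:main} applies verbatim. For each nonempty interval $I\in\mbI$, set
$$M_I\coloneqq\acl\Bigl(\bigcup_{w\in I}M_w\Bigr)\cap M,$$
with $M_\emptyset\coloneqq\emptyset$. The first step is to show that $M_I\preceq M$. Using the recursion $I_{n+1}=I_n\cup\ell_nI_n$, together with idempotence of the real algebraic closure and hypothesis~(2), an induction on $n$ yields $M_{I_n}\preceq M$, with base case $M_{I_0}=M_e$. Translating by $\tau_a,\tau_b$ extends this to every $I\in\mbI$, and separability together with $\aleph_0$-categoricity gives $M_I\simeq M$ for each nonempty $I$.

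I would then verify the three conclusions of Theorem~\ref{sCai:thm:cairns}. The density $M=\overline{\bigcup_I M_I}$ is the hypothesis that $\set{M_w}$ algebraically generates $M$. The equivariance $\tau_w M_I=M_{wI}$ is immediate from $\tau_w M_{w'}=M_{ww'}$ and the canonicity of the construction of $M_I$ from $\set{M_w}_{w\in I}$. The stable independence $M_I\ind_{M_K}M_J$ for $K=I\cap J$ is the only step where the independence of the family really enters: by monotonicity and transitivity of $\ind$ applied to the independent family, $\set{M_w}_{w\in I\setminus K}$ is independent from $\set{M_w}_{w\in J\setminus K}$ over $\set{M_w}_{w\in K}$, and in a stable theory one may then algebraically close both sides over the base without breaking independence, yielding $M_I\ind_{M_K}M_J$.

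With Theorem~\ref{sCai:thm:cairns} thus instantiated by $\set{M_I}_{I\in\mbI}$ and the given action $\mbF\actson^\tau M$, Remark~\ref{rem:anti-ref-free-action} applies: stability of the theory ensures every sort has a stable metric, and non-compactness of $M$ provides non-algebraic elements, so the action of $\mbF$ is faithful and $\set{\tau_a,\tau_b}$ generates a free subgroup of rank two. The hypothesis $\dcl(\emptyset)=\acl(\emptyset)$ means we are already in the situation $G=G^\circ$, so the Proposition preceding Theorem~\ref{sT:main} applies directly and concludes that $\set{\tau_a,\tau_b}$ is a splitting Kazhdan set for $\Aut(M)$. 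The main technical content is the independence condition in the second step; I expect no serious obstacle, as it reduces to standard properties of independent families in stable theories.
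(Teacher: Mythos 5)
Your proposal is correct and follows essentially the same route as the paper: the paper likewise defines $M_I$ as the real algebraic closure of $\bigcup_{w\in I}M_w$, uses hypothesis (2) and stability (standard properties of independent families, preservation of independence under algebraic closure) to verify the conditions of Theorem~\ref{sCai:thm:cairns}, and then applies the proposition of Section~\ref{sec:PropT} together with Remark~\ref{rem:anti-ref-free-action}, the only cosmetic difference being that the paper invokes stationarity to obtain the automorphisms $\tau_a,\tau_b$, which in the theorem's statement (and in your argument) are given as part of the hypotheses.
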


Now we turn to some concrete examples.

\subsection*{The case of $\Unit(\ell^2)$} An explicit finite Kazhdan set for the unitary group $\Unit(\ell^2)$  was given in \cite{bekkaUnitary}: it consists of the generators of the unitary action $\mbF\actson\ell^2(\mbF)$. We can add to this that if we consider instead the unitary action $\mbF\actson \Unit(\ell^2(\mbF\times\mbN))$, then the generators of this action are moreover a \emph{splitting} Kazhdan set for the unitary group. Indeed, the underlying model (the Hilbert space) satisfies the hypotheses of Theorem~\ref{thm:special-cases}, and the presentation $M=\ell^2(\mbF\times\mbN)=\bigoplus_{w\in\mbF}\ell^2$ exhibits a generating, independent $\mbF$-family of submodels.

\subsection*{The cases of $S_\infty$ and $\GL(\infty,F_q)$} The same applies to the infinite symmetric group, $S_\infty$, and to the infinite-dimen\-sional general linear group over the finite field with $q$ elements, $\GL(\infty,F_q)$. Explicit finite Kazhdan sets for these oligomorphic groups were given in \cite{tsaUnitary}. They are induced by the permutation action $\mbF\actson\mbF$, after identifying $S_\infty$ with $\Sym(\mbF)$ and identifying $\GL(\infty,F_q)$ with the linear group of an $F_q$-vector space with base $\mbF$. As with the unitary group, we may consider instead the permutation action $\mbF\actson\mbF\times\mbN$ and the corresponding presentations of the groups $S_\infty$ and $\GL(\infty,F_q)$. By means of Theorem~\ref{thm:special-cases}, we get then splitting Kazhdan sets for these groups.

\subsection*{The case of $\Aut(\mu)$} The measure algebra of the unit interval $([0,1],\mu)$ with Lebesgue measure is also an $\aleph_0$-catego\-rical structure and enjoys the properties of the hypotheses of Theorem~\ref{thm:special-cases}; see \cite[\textsection 16]{bbhu08}. Up to isomorphism, we may identify it with the measure algebra $M$ of the space $([0,1]^\mbF,\mu^\mbF)$. The natural shift action $\mbF\actson [0,1]^\mbF$ induces then an action $\mbF\actson^\tau M$ of the kind stated in the theorem. Indeed, for each $w\in\mbF$, the submodel $M_w$ corresponds to the algebra of sets that are measurable with respect to the $w$-th coordinate. By identifying $\Aut(\mu)\simeq\Aut([0,1]^\mbF,\mu^\mbF)\simeq\Aut(M)$, we obtain a splitting Kazhdan set for the group of measure-preserving transformations of the interval.

\subsection*{The case of $\Aut^*(\mu)$} The group $\Aut^*(\mu)$ of non-singular transformations of $([0,1],\mu)$ (i.e., transformations that preserve null sets) can also be seen as the automorphism group of an $\aleph_0$-catego\-rical structure. Indeed, $\Aut^*(\mu)\simeq\Aut(M)$ where $M$ is the \emph{Banach lattice} $L^1([0,1])$, i.e., the Banach space $L^1([0,1])$ augmented with operations for the pointwise maximum and minimum of pairs of $L^1$ functions. In fact, if $\Omega\subseteq\mbR$ is any measurable subset of positive Lebesgue measure, then $M$ and $L^1(\Omega)$ are isomorphic as Banach lattices, and the natural inclusion $L^1(\Omega)\subseteq L^1(\mbR)$ is elementary. Here as well, the structure $M$ satisfies all the hypotheses of Theorem~\ref{thm:special-cases}; see \cite[\textsection 17]{bbhu08} and \cite{bbhLplattice}.

Now let $w\in\mbF\mapsto n_w\in\mbN$ be any bijection, and define $\Omega_w=[n_w,n_w+1)\subseteq\mbR$ and $M_w=L^1(\Omega_w)$. Then $\set{M_w}_{w\in\mbF}$ is an independent, generating family of submodels of $L^1(\mbR)$. If $\tau_a,\tau_b\colon\mbR\to\mbR$ are, for instance, piecewise translations of $\mbR$ such that $\tau_a(\Omega_w)=\Omega_{aw}$ and $\tau_b(\Omega_w)=\Omega_{bw}$ for every $w\in\mbF$, then they induce automorphisms of $L^1(\mbR)$ with $\tau_a (M_w) = M_{aw}$ and $\tau_b (M_w) = M_{bw}$. We conclude that $\set{\tau_a,\tau_b}$ is a splitting Kazhdan set for $\Aut(L^1(\mbR))\simeq\Aut^*(\mu)$.

\subsection*{The case of randomized groups} Let $\Omega=[0,1]$. If $G$ is a Polish group, we can consider the group $L^0(\Omega,G)$ of measurable maps $\Omega\to G$ up to $\mu$-almost everywhere equality, endowed with the topology of convergence in measure, which is a Polish group as well; see the book \cite[\textsection 19]{kechrisGlobal}. As mentioned in the introduction, Pestov \cite{pestovVersus} observed that this group need not have Property (T) even if $G$ is compact.

On the other hand, we may form the semi-direct product $G\wr\mu\coloneqq L^0(\Omega,G)\rtimes\Aut(\Omega,\mu)$. If $G$ is the automorphism group of a separable metric structure $M$, then $G\wr\mu$ is the automorphism group of the \emph{Borel randomization} of $M$, as shown in \cite{ibaRando}. The passage from $M$ to its randomization preserves $\aleph_0$-categoricity. Hence, if $G$ is Roelcke precompact (in particular, if $G$ is compact), then so is $G\wr\mu$, and thus $G\wr\mu$ has Property (T). Moreover, the randomization always satisfies $\dcl(\emptyset)=\acl(\emptyset)$ (see Ben Yaacov's \cite[Corollary~5.9]{benOntheories}), so $G\wr\mu$ has strong Property~(T).

It might be worth noting that a Kazhdan set for $G\wr\mu$ cannot in general be contained in the subgroup $\Aut(\Omega,\mu)$. For example, if $G=\set{-1,1}$ is the group with two elements, the unitary representation $G\wr\mu\actson^\sigma L^2(\Omega)$ given by $(\sigma_{(g,t)}\xi)(\omega)=g(\omega)\xi(t\inv\omega)$ for each $g\in L^0(\Omega,G)$ and $t\in\Aut(\Omega,\mu)$ has no invariant unit vectors, yet the subgroup $\Aut(\Omega,\mu)$ fixes the constant functions of $L^2(\Omega)$. A similar argument works for any $G$ admitting non-trivial unitary representations.

Given that randomizations preserve stability, it should be possible to exhibit Kazhdan sets for many concrete cases of groups of the form $G\wr\mu$ following the strategy discussed in this section. In particular, one should be able to describe a finite Kazhdan set for $G\wr\mu$ when $G$ is a compact metrizable group. However, to implement our method one needs a description of the stable independence relation in randomizations that is not directly available in the literature, and which is not our aim to develop here.

\noindent\hrulefill

\bibliographystyle{amsalpha}
\bibliography{biblio}

\end{document}